\begin{document}



\setlength{\parindent}{5mm}
\renewcommand{\leq}{\leqslant}
\renewcommand{\geq}{\geqslant}
\newcommand{\N}{\mathbb{N}}
\newcommand{\sph}{\mathbb{S}}
\newcommand{\Z}{\mathbb{Z}}
\newcommand{\R}{\mathbb{R}}
\newcommand{\C}{\mathbb{C}}
\newcommand{\F}{\mathbb{F}}
\newcommand{\g}{\mathfrak{g}}
\newcommand{\h}{\mathfrak{h}}
\newcommand{\K}{\mathbb{K}}
\newcommand{\RN}{\mathbb{R}^{2n}}
\newcommand{\ci}{c^{\infty}}
\newcommand{\derive}[2]{\frac{\partial{#1}}{\partial{#2}}}
\renewcommand{\S}{\mathbb{S}}
\renewcommand{\H}{\mathbb{H}}
\newcommand{\eps}{\varepsilon}
\newcommand{\propo}{Proposition}
\newcommand{\lag}{Lagrangian}
\newcommand{\diffeo}{diffeomorphism}
\newcommand{\hamil}{Hamiltonian}
\newcommand{\sub}{submanifold}
\newcommand{\qmor}{homogeneous quasimorphism}
\newcommand{\enum}{enumerate}
\newcommand{\wt}{\widetilde}

\theoremstyle{plain}
\newtheorem{theo}{Theorem}
\newtheorem{prop}[theo]{Proposition}
\newtheorem{lemma}[theo]{Lemma}
\newtheorem{definition}[theo]{Definition}
\newtheorem*{notation*}{Notation}
\newtheorem*{notations*}{Notations}
\newtheorem{corol}[theo]{Corollary}
\newtheorem{conj}[theo]{Conjecture}
\newtheorem{question}[theo]{Question}
\newtheorem{claim}[theo]{Claim}

\newenvironment{demo}[1][]{\addvspace{8mm} \emph{Proof #1.
    ~~}}{~~~$\Box$\bigskip}

\newlength{\espaceavantspecialthm}
\newlength{\espaceapresspecialthm}
\setlength{\espaceavantspecialthm}{\topsep} \setlength{\espaceapresspecialthm}{\topsep}

\newenvironment{example}[1][]{
\vskip \espaceavantspecialthm \noindent \textsc{Example
#1.} }%
{\vskip \espaceapresspecialthm}

\newenvironment{remark}[1][]{\refstepcounter{theo} 
\vskip \espaceavantspecialthm \noindent \textsc{Remark~\thetheo
#1.} }%
{\vskip \espaceapresspecialthm}

\def\bb#1{\mathbb{#1}} \def\m#1{\mathcal{#1}}

\def\momeg{(M,\omega)}
\def\co{\colon\thinspace}
\def\Homeo{\mathrm{Homeo}}
\def\Diffeo{\mathrm{Diffeo}}
\def\Symp{\mathrm{Symp}}
\def\Sympeo{\mathrm{Sympeo}}
\def\Id{\mathrm{Id}}
\newcommand{\norm}[1]{||#1||}
\def\Ham{\mathrm{Ham}}
\def\lagham#1{\mathcal{L}^\mathrm{\Ham}({#1})}
\def\Hamtilde{\widetilde{\mathrm{\Ham}}}
\def\cOlag#1{\mathrm{Sympeo}({#1})}
\def\Crit{\mathrm{Crit}}
\def\diag{\mathrm{diag}}
\def\Spec{\mathrm{Spec}}
\def\osc{\mathrm{osc}}
\def\id{\mathrm{id}}
\def\Cal{\mathrm{Cal}}
\def\Ker{\mathrm{Ker}}
\def\Im{\mathrm{Im}}
\def\Int{\mathrm{Int}}
\def\PD{\mathrm{PD}}
\def\Spec{\mathrm{Spec}}
\def\Hof{\mathrm{Hof}}

\title[]{Homogeneous quasimorphisms, $C^0$-topology and Lagrangian intersection}
\author{Yusuke Kawamoto}

\address{Yusuke Kawamoto: D\'epartement de Math\'ematiques et Applications, \'Ecole Normale Sup\'erieure, 45 rue d'Ulm, F 75230 Paris cedex 05}
\email{yusukekawamoto81@gmail.com, yusuke.kawamoto@ens.fr}

\maketitle

\begin{abstract}
We construct an example of a non-trivial homogeneous quasimorphism on the group of Hamiltonian diffeomorphisms of the two and four dimensional quadric hypersurfaces which is continuous with respect to both the $C^0$-metric and the Hofer metric. This answers a variant of a question of Entov--Polterovich--Py which is one of the open problems listed in the monograph of McDuff--Salamon. Throughout the proof, we make extensive use of the idea of working with different coefficient fields in quantum cohomology rings. As a by-product of the arguments in the paper, we answer a question of Polterovich--Wu regarding {\qmor}s on the group of Hamiltonian {\diffeo}s of the complex projective plane and prove some intersection results about Lagrangians in the four dimensional quadric hypersurface.
 \end{abstract}

\tableofcontents

\section{Introduction} 

A (real-valued) homogeneous quasimorphism on a group $G$ is a map
$$\mu:G\to \mathbb{R}$$
which satisfies
\begin{subequations}
\begin{gather}
    \exists C>0 \  \text{s.t.}\  \forall f,g \in G,\ |\mu(f\cdot g)-\mu(f)-\mu(g)|\leq C, \\
\forall k\in \mathbb{Z},\forall f\in G, \ \mu(f^k)=k\cdot \mu(f).
\end{gather}
\end{subequations}

The study of homogeneous quasimorphisms is a very rich topic with numerous connections to other mathematical domains.  For example, homogeneous quasimorphisms naturally appear in the theory of bounded cohomology, they play a crucial role in the study of the commutator length and they also have many applications in the study of algebraic and topological properties (in case $G$ is a topological group) of $G$. 

In the context of symplectic topology, the study of algebraic and topological properties of the group of symplectomorphisms and Hamiltonian diffeomorphisms has been an important subject. For a closed symplectic manifold $(M,\omega)$, denote the group of Hamiltonian diffeomorphisms by $\Ham(M,\omega)$ and its universal cover by $\widetilde{\Ham}(M,\omega)$. One of the first groundbreaking results in this direction is due to Banyaga \cite{[Ban78]} which states that $\Ham(M,\omega)$ is a simple group and $\widetilde{\Ham}(M,\omega)$ is a perfect group. This implies that there exist no non-trivial homomorphisms on $\Ham(M,\omega)$ and $\widetilde{\Ham}(M,\omega)$. However, it was discovered that non-trivial (real-valued) homogeneous quasimorphisms on $\Ham(M,\omega)$ and $\widetilde{\Ham}(M,\omega)$ do exist for some symplectic manifolds. Various constructions have been studied extensively as well as their applications to Hamiltonian dynamics. Just to mention a few, there are constructions by Barge--Ghys \cite{[BG92]}, Borman \cite{[Bor12]}, Entov \cite{[Ent04]}, Entov--Polterovich \cite{[EP03]}, Gambaudo--Ghys \cite{[GG04]}, Givental \cite{[Giv90]}, McDuff \cite{[McD10]}, Ostrover \cite{[Ost06]}, Py \cite{[Py06]} and Shelukhin \cite{[Sh14]}. Contact counterparts are also considered by Givental \cite{[Giv90]}, Borman--Zapolsky \cite{[BorZap15]} and Granja--Karshon--Pabiniak--Sandon \cite{[GKPS20]}. In particular, Entov--Polterovich \cite{[EP03]} introduced a Floer theoretic method to construct homogeneous quasimorphisms on $\widetilde{\Ham}(M,\omega)$
$$\zeta_e:  \widetilde{\Ham}(M,\omega) \to \mathbb{R}$$
where $(M,\omega)$ is a closed monotone symplectic manifold which satisfies some property. Recall that a closed symplectic manifold $(M,\omega)$ is called monotone if there exists a constant $\kappa>0$, which is referred to as the monotonicity constant, such that 
$$\omega|_{\pi_2(M)}=\kappa \cdot c_1|_{\pi_2(M)}$$
where $c_1=c_1(TM)$ denotes the first Chern class. In this paper, we only consider monotone symplectic manifolds unless mentioned otherwise. The precise construction of $\zeta_e$ is explained in Section \ref{Quasimorphism via spectral invariants}. Moreover, (a certain normalization of) $\zeta_e$ satisfies the so-called Calabi property which means, roughly speaking, that ``locally'' it coincides with the Calabi homomorphism: we refer to \cite{[EP03]} for the precise definition and its proof. In some cases, it is known that this  homogeneous quasimorphism descends to $\Ham(M,\omega)$. For an excellent survey of the theory of quasimorphisms in the symplectic context and their relations to other topics, we refer to \cite{[Ent14]}.

\section{Main results}\label{main results}

\subsection{Homogeneous quasimorphisms}
The following question concerning the continuity of quasimorphisms was posed by Entov--Polterovich--Py in \cite{[EPP12]}. This question appears also in the list of open problems in the monograph of McDuff--Salamon.

\begin{question}$($\cite{[EPP12]}, \cite[Chapter 14, Problem 23]{[MS98]}$)$
\begin{enumerate}
\item Does there exist a nonzero homogeneous quasimorphism 
$$\mu:\Ham(S^2) \to  \mathbb{R}$$
that is continuous with respect to the $C^0$-topology on $\Ham(S^2)$?
\item If yes, can it be made Lipschitz with respect to the Hofer metric?
\end{enumerate}
\end{question}

Recall that the $C^0$-topology on $\Ham(M,\omega)$ is induced by the $C^0$-metric of Hamiltonian diffeomorphisms $\phi,\psi \in \Ham(M,\omega)$, which is defined by
$$d_{C^0}(\phi,\psi):=\max_{x\in M}d(\phi(x),\psi(x)),$$
where $d$ denotes the distance on $M$ induced by a fixed Riemannian metric on $M$. See Section \ref{topology} for further remarks on $C^0$-topology as well as the Hofer metric.

We provide some background and motivation concerning this question of Entov--Polterovich--Py.

\begin{enumerate}
    \item \textit{Hofer metric vs. $C^0$-metric}: The relation between $C^0$-topology and the Hofer metric is very subtle. For example, $C^0$-topology is not continuous with respect to the Hofer metric. Conversely, Entov--Polterovich--Py point out that on $\Ham(D^{2n}(1))$, the group of compactly supported Hamiltonian diffeomorphisms of the closed unit ball $D^{2n}(1)$ in $\mathbb{R}^{2n}$, the Hofer metric is not $C^0$-continuous. For some striking results that demonstrate rigidity and flexibility of symplectic objects with respect to $C^0$-topology, see \cite{[BHS18]}, \cite{[BO16]} and \cite{[HLS15]}. 

In fact, for closed surfaces of positive genus $\Sigma$, there are examples of homogeneous quasimorphisms defined on $\Ham(\Sigma)$ which are $C^0$-continuous but not Hofer Lipschitz continuous: for their construction, see Gambaudo--Ghys \cite{[GG97]}, \cite{[GG04]} and for their discontinuity with respect to the Hofer metric, see \cite{[Kha19]}. On the other hand, the aforementioned Entov--Polterovich type homogeneous quasimorphisms are Hofer Lipschitz continuous but are not $C^0$-continuous: in fact, it is known that homogeneous quasimorphisms which have the Calabi property are not $C^0$-continuous: for a proof, see \cite{[EPP12]}.

\item \textit{Homogeneous quasimorphisms on the group of Hamiltonian homeomorphisms}: Given a symplectic manifold $(M,\omega)$, consider the $C^0$-closure of $\Ham(M,\omega)$ inside the group of homeomorphims of $M$. We denote it by $\overline{\Ham}(M,\omega)$ and call its elements Hamiltonian homeomorphisms. Hamiltonian homeomorphisms are central objects in $C^0$-symplectic topology. A $C^0$-continuous homogeneous quasimorphism defined on $\overline{\Ham}(M,\omega)$ will be useful to obtain information about the algebraic and topological properties of $\overline{\Ham}(M,\omega)$. In particular, when $(M,\omega)$ is either a $2$-sphere $S^2$ or a $2$-disk $D^2$, $\overline{\Ham}(M,\omega)$ is the identity component of the group of area-preserving homeomorphisms. A (non-trivial) homogeneous quasimorphism on $\overline{\Ham}(M,\omega)$ can be naturally obtained as an extension of a $C^0$-continuous (non-trivial) homogeneous quasimorphism on $\Ham(M,\omega)$ (see \cite[Proposition 1.4]{[EPP12]}). Therefore, the existence of a non-trivial $C^0$-continuous homogeneous quasimorphism on $\Ham(S^2)$ and $\Ham(D^2)$ has a strong relation to a question concerning the simplicity of groups $\overline{\Ham}(S^2)$ and $\overline{\Ham}(D^2)$ where the standard area-forms are considered as symplectic forms. The latter was known under the name of the simplicity conjecture (\cite[Chapter 14, Problem 42]{[MS98]}) and has caught the attention of many mathematicians over the years. It has been recently settled by Cristofaro-Gardiner--Humili\`ere--Seyfaddini \cite{[CGHS20]}.

\item \textit{Uniqueness of homogeneous quasimorphisms on $\Ham(S^2)$}: Another motivation is the uniqueness of homogeneous quasimorphism on $\Ham(S^2)$. For example, an affirmative answer to the first question will imply the non-uniqueness of such maps, since Entov--Polterovich type homogeneous quasimorphisms are not $C^0$-continuous.
\end{enumerate} 
For more background on this question, see \cite{[EPP12]}.

In this paper, we consider a generalized version of the question of Entov--Polterovich--Py: 

\begin{question} Does there exist a closed symplectic manifold $(M,\omega)$ which admits a non-trivial homogeneous quasimorphism on $\Ham(M,\omega)$ which is $C^0$-continuous? If yes, can it be Hofer Lipschitz continuous?
\end{question}

Entov--Polterovich--Py proved that the vector space consisting of non-trivial homogeneous quasimorphisms on $\Ham(D^{2n}(1))$ that are both $C^0$ and Hofer Lipschitz continuous is infinite dimensional \cite[Proposition 1.9]{[EPP12]}. However, no example of a closed symplectic manifold $(M,\omega)$ which admits a homogeneous quasimorphism on $\Ham(M,\omega)$ that is both Hofer continuous and $C^0$-continuous is known by the time of writing. In fact, for closed symplectic manifolds, according to \cite{[Ent14]}, constructions of Givental, Entov--Polterovich and Borman are so far the only known examples of homogeneous quasimorphisms (on $\widetilde{\Ham}(M,\omega)$) that are Hofer continuous. The Hofer continuity of Givental's homogeneous quasimorphisms was proven by Borman--Zapolsky \cite{[BorZap15]}. These examples all possess the Calabi property which implies that, in the case they descend to $\Ham(M,\omega)$, they are not $C^0$-continuous. The Calabi property of Givental's homogeneous quasimorphisms was proven by Ben Simon \cite{[BS07]}.

Our main result provides such examples for the monotone $n$-quadric $(Q^n,\omega)$ for $n=2,4$. Throughout the paper, we consider the standard monotone symplectic form $\omega$ of $Q^n$ with the normalization $\int_{Q^n} \omega^n=2$ so that the monotonicity constant $\kappa$ is $1/N_{Q^n}=1/n$. Note that $(Q^2,\omega)$ is symplectomorphic to the monotone product $(S^2\times S^2, \sigma \oplus \sigma)$ where $\sigma$ is the area-form of $S^2$ with $\int_{S^2} \sigma =1$ and $(Q^4,\omega)$ is symplectomorphic to $Gr_{\mathbb{C}}(2,4)$ equipped with the standard monotone symplectic form with a certain normalization. 

Precisely, we prove the following.

\begin{theo}\label{main theo}
There exist non-trivial homogeneous quasimorphisms
$$\mu:\Ham(Q^n) \to \mathbb{R}$$
where $n=2,4,$ that are $C^0$-continuous i.e. 
$$\mu:(\Ham(Q^n),d_{C^0}) \to \mathbb{R}$$
is continuous, and Lipschitz continuous with respect to the Hofer metric.
\end{theo}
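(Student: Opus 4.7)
The plan is to construct $\mu$ as a carefully chosen linear combination of Entov--Polterovich-type homogeneous quasimorphisms $\zeta_e$ on $\Hamtilde(Q^n)$, each built from an idempotent $e$ of the quantum cohomology of $Q^n$ over a suitable Novikov field, and to arrange that the pieces carrying the Calabi property cancel.

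First I would analyze $QH^*(Q^n;\Lambda)$ for $n=2,4$ over Novikov fields $\Lambda$ built on different base fields. Since $Q^2$ is symplectomorphic to $(\S^2 \times \S^2, \sigma \oplus \sigma)$, its quantum cohomology splits, after a suitable base change, into field summands that yield several orthogonal idempotents $e_i$; an analogous decomposition holds for $Q^4 \cong \mathrm{Gr}_{\C}(2,4)$. To each idempotent $e_i$ the Entov--Polterovich construction of \cite{[EP03]} associates a homogeneous quasimorphism $\zeta_{e_i} \co \Hamtilde(Q^n) \to \R$ that is Hofer Lipschitz continuous and possesses the Calabi property with a normalization depending on $e_i$.

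Next I would form $\mu = \sum_i a_i \zeta_{e_i}$ with coefficients chosen so that (a) the Calabi contributions cancel, forcing $\mu$ to vanish on Hamiltonians whose support lies in a displaceable open set, and (b) $\mu$ vanishes on the image of $\pi_1(\Ham(Q^n))$ in $\Hamtilde(Q^n)$, so that it descends to a quasimorphism on $\Ham(Q^n)$. Condition (b) is verified by computing the Seidel elements and evaluating them on each $e_i$, reducing to a small linear system for the $a_i$ jointly with (a); non-triviality of $\mu$ is then certified by evaluating on an explicit Hamiltonian, for instance an iterated rotation of a single $\S^2$-factor in $Q^2$, on which the $\zeta_{e_i}$ attached to different field summands take distinguishable values.

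Finally, the Hofer Lipschitz continuity of $\mu$ follows from that of each $\zeta_{e_i}$. For the $C^0$-continuity I would combine the vanishing of $\mu$ on displaceably-supported Hamiltonians with a fragmentation argument on $Q^n$: any $C^0$-small Hamiltonian diffeomorphism factors as a bounded product of Hamiltonians supported in displaceable balls, and the quasimorphism defect combined with this factorization yields $|\mu(\phi)| \to 0$ as $\phi \to \id$ in $C^0$. I expect the main obstacle to be producing a single $\mu$ that simultaneously descends to $\Ham(Q^n)$, kills Calabi, and remains non-trivial --- especially for $Q^4$, where the field decomposition is more intricate and the descent condition imposes non-trivial linear constraints on the $a_i$ --- and it is precisely this reconciliation that is expected to make essential use of the freedom to vary the coefficient field in the quantum cohomology.
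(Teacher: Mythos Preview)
Your proposal differs from the paper's proof in two essential places, and in one of them there is a genuine gap.

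\textbf{Descent and $C^0$-continuity.} The paper does \emph{not} argue via cancellation of the Calabi property plus fragmentation. Instead it proves the single inequality $|\mu(\tilde\phi)|\leq\gamma(\tilde\phi)$ for $\mu=\zeta_{e_+}-\zeta_{e_-}$ (Proposition~\ref{mu leq gamma}) and then invokes the author's earlier $C^0$-control of the spectral norm (Theorem~\ref{my lemma}, from \cite{[Kaw21]}): for monotone $M$, $\gamma$ is uniformly bounded on a $C^0$-neighbourhood of $\id$ and on $\pi_1(\Ham)$. This one estimate simultaneously forces $\mu|_{\pi_1(\Ham)}\equiv 0$ (hence descent, with no Seidel computation needed) and, via Shtern's criterion, $C^0$-continuity. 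Your fragmentation route could in principle be made to work, but note that your conclusion ``$|\mu(\phi)|\to 0$'' does not follow from writing $\phi=\phi_1\cdots\phi_N$ with $\mu(\phi_i)=0$: the quasimorphism defect only gives $|\mu(\phi)|\leq (N-1)D$, a constant bound, and you would still need Shtern to finish. More importantly, this approach gives no leverage on descent, so you are forced into a separate Seidel-element calculation which the paper avoids entirely.

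\textbf{Non-triviality.} Here your plan is underspecified, and for $Q^4$ it is a real gap. The paper does not evaluate on an explicit Hamiltonian; it shows $\zeta_{e_+}\neq\zeta_{e_-}$ by producing two \emph{disjoint} Lagrangians $L_0,L_1$ with $HF((L_i,b_i);\Lambda)\neq 0$, then uses the closed--open map and Entov--Polterovich heaviness theory to conclude that one is $e_+$-superheavy and the other $e_-$-superheavy. For $Q^2$ these are the exotic torus and the anti-diagonal (\cite{[FOOO12]}, \cite{[EliP10]}); for $Q^4$ they are a torus fibre and a non-torus $S^1\times S^3$ fibre of the Gelfand--Cetlin system, whose Floer cohomology is computed in \cite{[NNU10]},\cite{[NU16]} via superpotential techniques. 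This is precisely where the interplay of coefficient fields enters: the spectral-norm bound in Part~1 needs the $\Z$-graded ring $QH^\ast(M;\C)$, while the Lagrangian Floer input in Part~2 lives over the universal Novikov field $\Lambda$, and the two are linked by the comparison Lemma~\ref{rho compare}. Your proposal locates the role of coefficient fields in ``reconciling descent, Calabi-cancellation, and non-triviality'', which is not how the paper uses them; and without the Gelfand--Cetlin input you have no mechanism to certify non-triviality on $Q^4$.
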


\begin{remark}\label{novelty}
\begin{enumerate}
    \item Although it is not explicitly stated, the existence of a homogeneous quasimorphism on $\widetilde{\Ham}(Q^n), \forall n \in \mathbb{N}$ was essentially known since \cite{[EP03]}. The descent of Entov--Polterovich type homogeneous quasimorphisms to $\Ham(Q^2)$ and $\Ham(Q^4)$ was proven in \cite{[EP03]} and \cite{[Br11]}, respectively. The homogeneous quasimorphisms in Theorem \ref{main theo} are different from the Entov--Polterovich type homogeneous quasimorphisms as they are defined as differences of two Entov--Polterovich type homogeneous quasimorphisms. 
    
    \item In the case of $n=2$, if we compose $\mu: \Ham( S^2\times S^2) \to \mathbb{R}$ with
$$\Ham( S^2 ) \to \Ham( S^2\times S^2)$$ 
$$\phi \mapsto \phi \times \phi,$$
we obtain a $C^0$-continuous and Hofer Lipschitz continuous homogeneous quasimorphism on $\Ham( S^2 )$ but this turns out to be trivial and thus does not answer the question of Entov--Polterovich--Py. See Remark \ref{not good} for further explanation.
    
    \item In Section \ref{Proof of main theo general}, we will discuss a generalization of Theorem \ref{main theo}.
\end{enumerate}

\end{remark}

\subsection{Question of Polterovich--Wu}\label{A question of Polterovich--Wu}

One of the key ideas in proving Theorem \ref{main theo} and \ref{main theo general} is to work with quantum cohomology rings with different coefficient fields, namely the field of Laurent series and the universal Novikov field. The advantage of this idea in our context is explained in Section \ref{comparison lemma}. As another application of this idea, we answer a question of Polterovich--Wu which was posed in \cite[Remark 5.2]{[Wu15]}.

We briefly review the question. Details of the question are postponed to Section \ref{proof of Q of PW}. In \cite{[Wu15]}, Wu found three homogeneous quasimorphisms $\{ \zeta_{j} \}_{j=1,2,3}$ on $\widetilde{\Ham}(\mathbb{C}P^2)$ via the Entov--Polterovich construction for the quantum cohomology ring with the universal Novikov field. Polterovich posed the following question.

\begin{question}$($\cite[Remark 5.2]{[Wu15]}, see also Question \ref{precise Q of PW}$)$

Is it possible to distinguish the three homogeneous quasimorphisms $\{ \zeta_{j} \}_{j=1,2,3}$?
\end{question}

We answer this in the negative.

\begin{theo}\label{Q of PW}
The three homogeneous quasimorphisms $\{ \zeta_{j} \}_{j=1,2,3}$ coincide i.e.
$$\zeta_{1}=\zeta_{2}=\zeta_{3}.$$
\end{theo}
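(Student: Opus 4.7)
The plan is to exploit the different-coefficient-fields strategy emphasized in the introduction. Over the universal Novikov field $\Lambda_{\mathrm{Nov}}$, the ring $QH^*(\mathbb{C}P^2;\Lambda_{\mathrm{Nov}})$ splits as a product of three field factors, and the three quasimorphisms $\zeta_j$ are the Entov--Polterovich quasimorphisms built from the three orthogonal idempotents $e_1,e_2,e_3$ corresponding to this splitting. Over the smaller field of Laurent series $\Lambda_{\mathrm{Laur}}$, the minimal polynomial of the hyperplane class does not factor completely, so the three idempotents exist in $\Lambda_{\mathrm{Nov}}$ but not in the Laurent-series version of quantum cohomology; crucially, they are cyclically permuted by a finite Galois action.

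The first step is to identify, within $\Lambda_{\mathrm{Nov}}$, a finite Galois extension $\mathbb{L}$ of $\Lambda_{\mathrm{Laur}}$ over which the three idempotents $e_1,e_2,e_3$ are defined, and to check that the Galois group $G := \mathrm{Gal}(\mathbb{L}/\Lambda_{\mathrm{Laur}})$ is cyclic of order three and acts transitively on $\{e_1,e_2,e_3\}$ by permutation. Concretely $\mathbb{L}$ is obtained by adjoining to $\Lambda_{\mathrm{Laur}}$ a root of the polynomial cutting out the splitting of $QH^*(\mathbb{C}P^2)$, together with a primitive cube root of unity if needed.

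The second step is to invoke naturality of Floer-theoretic spectral invariants under base change of the coefficient field. Since the Hamiltonian $H$, the symplectic action functional, and the Floer chain complex are defined over $\Lambda_{\mathrm{Laur}}$ (indeed over a subfield thereof), the Galois group $G$ acts on the base-changed Floer complex by action-filtration-preserving chain maps that commute with the differential. Consequently, for every Hamiltonian $H$, every $\sigma \in G$, and every class $a$ in the extended quantum cohomology one has
\begin{equation*}
c(\sigma(a);H) \;=\; c(a;H).
\end{equation*}
Applied to $a = e_j$ together with the transitive action of $G$ on the idempotents, this yields $c(e_1;H)=c(e_2;H)=c(e_3;H)$ for every $H$, and therefore $\zeta_1=\zeta_2=\zeta_3$ after passage to the homogeneous limit that defines the Entov--Polterovich quasimorphisms.

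The main obstacle is making rigorous the Galois invariance of spectral invariants once the coefficient ring is extended. This requires verifying, via a comparison lemma of the kind referred to in Section \ref{comparison lemma}, that enlarging the coefficient field from $\Lambda_{\mathrm{Laur}}$ to $\mathbb{L}$ preserves the action-filtered Floer homology in such a way that no new cycles of smaller action value appear, and that the Galois action on the extended complex genuinely commutes with the Floer differential. Once these technical points are settled, the theorem reduces to a clean Galois-symmetry statement: three real-valued functionals that are cyclically permuted by a $\mathbb{Z}/3$-action must coincide.
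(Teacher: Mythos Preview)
Your Galois-symmetry approach is correct in outline and would work once the technical verifications you flag are carried out, but it is genuinely different from the paper's argument. The paper does \emph{not} use any Galois action. Instead, it argues as follows: since $QH^\ast(\mathbb{C}P^2;\mathbb{C})$ over the field of Laurent series is itself a field, the asymptotic invariant $\zeta_1$ associated to the unit $1$ is already a homogeneous quasimorphism; the comparison lemma (Lemma~\ref{rho compare}) then gives $\zeta_{1_\Lambda}=\zeta_1$, so $\zeta_{1_\Lambda}$ is a homogeneous quasimorphism as well. The triangle inequality yields $\zeta_{e_j}\leq \zeta_{1_\Lambda}$ for each $j$, and since an inequality between two homogeneous quasimorphisms forces equality (apply it to $\widetilde{\phi}$ and $\widetilde{\phi}^{-1}$), one concludes $\zeta_{e_j}=\zeta_{1_\Lambda}$ for all $j$.

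Each route has its own payoff. Your argument, once made precise, actually yields the stronger pointwise statement $\rho(H,e_1)=\rho(H,e_2)=\rho(H,e_3)$ for every Hamiltonian $H$, not merely the asymptotic equality; and it makes no use of the field structure of $QH^\ast(\mathbb{C}P^2;\mathbb{C})$, only of the cyclic symmetry of the idempotents. The paper's argument, on the other hand, avoids introducing the intermediate extension $\mathbb{L}$ and its Galois group entirely, and in addition identifies the common value of the $\zeta_{e_j}$ with the classical Entov--Polterovich quasimorphism $\zeta_1$ over the Laurent field. The paper's mechanism is also more portable: it applies verbatim whenever the Laurent-coefficient quantum cohomology has a field factor whose unit maps to a sum of the $\Lambda$-idempotents in question (as illustrated in the remark following the proof for $S^2\times S^2$), with no Galois symmetry required.
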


\subsection{Application}

The relation between the Hofer-topology and the $C^0$-topology on the group of Hamiltonian diffeomorphisms on closed symplectic manifolds still remains a mystery. In \cite{[LeR10]}, Le Roux posed the following question.

\begin{question}$($\cite{[LeR10]}$)$

Let $(M,\omega)$ be any closed symplectic manifold. For any $R>0$, does
$$\Ham_{\geq R}:=\{ \phi \in \Ham(M,\omega): d_{\Hof}( \id,\phi)\geq R\}$$
have a non-empty $C^0$-interior?
\end{question}

We answer to this question affirmatively for the quadric hypersurface $Q^n \times M$ where $n=2,4$.

\begin{theo}\label{leroux conj}

For any $R>0$,
$$\Ham_{\geq R}:=\{\phi \in \Ham(Q^n ): d_{\Hof}( \id,\phi)\geq R\}$$
has a non-empty $C^0$-interior where $n=2,4$.
\end{theo}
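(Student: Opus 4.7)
The plan is to deduce Theorem~\ref{leroux conj} directly from Theorem~\ref{main theo}: the quasimorphism $\mu$ produced there is simultaneously $C^0$-continuous and Hofer Lipschitz, and this combination is exactly the mixture of rigidities needed to exhibit a non-empty $C^0$-open subset of $\Ham_{\geq R}$.

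First I would record that any Hofer Lipschitz homogeneous quasimorphism $\mu$ on $\Ham(Q^n)$ automatically satisfies a linear estimate in terms of the Hofer norm. Let $L>0$ be a Hofer Lipschitz constant for $\mu$; note that $\mu(\id)=0$ by homogeneity. Combining the Lipschitz property with homogeneity and the bi-invariance of the Hofer metric, for every integer $k\geq 1$ and every $\phi\in\Ham(Q^n)$ one has
\[
k\,|\mu(\phi)| \;=\; |\mu(\phi^k)| \;\leq\; L\, d_{\Hof}(\id,\phi^k) \;\leq\; L\,k\, d_{\Hof}(\id,\phi),
\]
so dividing by $k$ gives $|\mu(\phi)|\leq L\, d_{\Hof}(\id,\phi)$ for all $\phi$. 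Thus a large value of $\mu$ forces a large Hofer distance from the identity.

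Next, since $\mu$ is non-trivial, there exists $\phi_1\in\Ham(Q^n)$ with $\mu(\phi_1)\neq 0$; by homogeneity $\mu(\phi_1^k)=k\,\mu(\phi_1)$ is unbounded in $k$, so for $k$ sufficiently large the element $\phi_0:=\phi_1^k$ satisfies $\mu(\phi_0)>LR+1$. The $C^0$-continuity of $\mu$ then produces a $C^0$-open neighborhood $U$ of $\phi_0$ in $\Ham(Q^n)$ on which $\mu>LR$. For any $\psi\in U$ the linear bound above yields $d_{\Hof}(\id,\psi)\geq \mu(\psi)/L>R$, so $U\subseteq\Ham_{\geq R}$ and provides the required non-empty $C^0$-interior.

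The entirety of the difficulty has been absorbed into Theorem~\ref{main theo}; once one has a non-trivial homogeneous quasimorphism that is both Hofer Lipschitz and $C^0$-continuous, Le~Roux's property reduces to the two elementary steps above. Hence the real obstacle is the construction of $\mu$, not running this short deduction.
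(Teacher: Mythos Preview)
Your argument is correct. The route differs from the paper's in a way worth noting: you use only the black-box output of Theorem~\ref{main theo} (a non-trivial homogeneous quasimorphism that is simultaneously $C^0$-continuous and Hofer Lipschitz), whereas the paper passes through the spectral norm. Concretely, the paper first proves the more general Theorem~\ref{monotone leroux}: on any monotone manifold with unbounded spectral norm $\gamma$, the set $\Ham_{\geq R}$ has non-empty $C^0$-interior. That argument uses the $C^0$-control of $\gamma$ from Theorem~\ref{my lemma} together with the triangle inequality for $\gamma$, and the quasimorphism $\mu$ enters only to certify that $\gamma$ is unbounded on $Q^n$ (via $|\mu|\leq\gamma$ from Proposition~\ref{descend}). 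Your approach avoids re-opening the $C^0$-continuity of $\mu$ into its ingredients and is therefore shorter; the paper's detour buys the more general statement that Le~Roux's property holds on any monotone manifold for which $\gamma$ is unbounded, even without a $C^0$-continuous quasimorphism at hand.

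One cosmetic point: you pick $\phi_1$ with $\mu(\phi_1)\neq 0$ and then assert $\mu(\phi_0)>LR+1$ for $\phi_0=\phi_1^k$. If $\mu(\phi_1)<0$ this fails as written; simply replace $\phi_1$ by $\phi_1^{-1}$ (or work with $|\mu|$ throughout) to fix it. Also, the homogenisation step deriving $|\mu(\phi)|\leq L\,d_{\Hof}(\id,\phi)$ is unnecessary, since the Lipschitz bound at $\psi=\id$ already gives it directly.
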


Theorem \ref{leroux conj} seems to be the first case where the question of Le Roux was verified for closed simply connected manifolds. In fact, according to \cite[Section 1.4]{[EPP12]}, ``for closed simply connected manifolds (and already for the case of the 2-sphere) the question is wide open''.

\begin{remark}
Our proof applies to any closed monotone symplectic manifold for which the spectral norm can be arbitrarily large: see Theorem \ref{monotone leroux}. See also Theorem \ref{leroux conj gene} for a slightly generalized statement.
\end{remark}

On a different note, Theorem \ref{main theo} has an application to the Rokhlin property of the group of Hamiltonian homeomorphisms. In fact, it implies that the group of Hamiltonian homeomorphisms of the two and four complex dimensional quadric hypersurfaces are not Rokhlin. We refer the readers to \cite[Section 1.1.1]{[Sh18]} on this topic.

\subsection{Strategy of the proof and structure of the paper}
The strategy of the proof of Theorem \ref{main theo}, which divides into two parts, is as follows:

We first prove that a {\qmor} on $\widetilde{\Ham}(M,\omega)$ which is obtained as the difference of any two Entov--Polterovich type {\qmor}s descends to $\Ham(M,\omega)$ and is bounded by the spectral norm $\gamma$. Next we show that it is $C^0$-continuous by using a result on the $C^0$-control of the spectral norm obtained by the author in \cite{[Kaw21]} (Theorem \ref{my lemma}). This is the first part of the proof. Note that this part applies to any monotone symplectic manifold.

In the second part of the proof, we will see that in order to prove that the resulting homogeneous quasimorphism is non-trivial, it suffices to find two disjoint Lagrangian submanifolds with non-vanishing Floer cohomology. We use examples found by Fukaya--Oh--Ohta--Ono and Eliashberg--Polterovich for the case of $Q^2$ and by Nishinou--Nohara--Ueda and Nohara--Ueda for the case of $Q^4$ where the Floer cohomology of Lagrangian fibers of a Gelfand--Cetlin system was studied via superpotential techniques. 

The crucial idea of the proof is to work with different quantum cohomology rings in Part 1 and 2. The differences of the two quantum cohomology rings as well as their advantages are explained in Section \ref{comparison lemma}. In Section \ref{proof of Q of PW}, we answer a question of Polterovich--Wu also by applying this idea. In Section \ref{lag int}, we discuss some consequences of the argument to  Lagrangian intersections.

\subsection{Acknowledgements}
I thank my supervisors Sobhan Seyfaddini and Claude Viterbo for regular discussions and for their precious comments on the earlier version of the paper. I am indebted to Kaoru Ono for a very stimulating discussion on the earlier version of this project, for explaining his works in collaboration with Fukaya, Oh and Ohta to me and for giving me an opportunity to present this work at a seminar at Kyoto University. Yoosik Kim has kindly answered several questions on his work for which I am grateful. I appreciate Egor Shelukhin for his comments on this work, especially for communicating the relation between Lemma \ref{rho compare} and a result of Usher--Zhang. I also benefited from conversations I had with Georgios Dimitroglou-Rizell, Jack Smith and Frol Zapolsky at the conference CAST 2020. I would like to express my gratitude to them as well. The referee has greatly contributed to improving the exposition to whom I owe a lot. I thank a lot for her/his careful reading and for the comments.

\section{Preliminaries}\label{preliminaries}
Let $(M,\omega)$ be a closed monotone symplectic manifold i.e.
$$\omega|_{\pi_2(M)}=\kappa \cdot c_1|_{\pi_2(M)}$$
for some monotonicity constant $\kappa>0$ where $c_1=c_1(TM)$ denotes the first Chern class. In this paper, we only consider monotone symplectic manifolds unless mentioned otherwise. The positive generators of $\langle \omega, \pi_2(M) \rangle$ and $\langle c_1, \pi_2(M) \rangle \subset \mathbb{Z}$ are respectively called the rationality constant and the minimal Chern number and will be respectively denoted by $\lambda_0$ and $N_M$. 

A Hamiltonian $H$ on $M$ is a smooth time dependent function $H:\mathbb{R}/\mathbb{Z} \times M\to \mathbb{R}$. A Hamiltonian $H$ is called mean-normalized if the following holds:
$$\forall t \in \mathbb{R}/\mathbb{Z},\ \int_M H_t(x)\omega^n=0.$$
We define its Hamiltonian vector field $X_{H_t}$ by 
$$-dH_t=\omega(X_{H_t},\ \cdot \ ).$$
The Hamiltonian flow of $H$, denoted by $\phi_H ^t$, is by definition the flow of $X_{H_t}$. A Hamiltonian diffeomorphism of $H$ is a diffeomorphism which arises as the time-one map of a Hamiltonian flow and will be denoted by $\phi_H$. It is well-known that the set of Hamiltonian diffeomorphisms forms a group and will be denoted by $\Ham(M,\omega)$. We denote its universal cover by $\widetilde{\Ham}(M,\omega)$.

Denote the set of smooth contractible loops in $M$ by $\mathscr{L}_0M$ and consider its universal cover. Two elements in the universal cover, say $[z_1,w_1]$ and $[z_2,w_2]$, are equivalent if $z_1=z_2$ and their boundary sum $w_1\# \overline{w_2}$ i.e. the sphere obtained by gluing $w_1$ and $w_2$ along their common boundary with the orientation on $w_2$ reversed, satisfies 
 $$\omega (w_1\# \overline{w_2})=0,\ c_1(w_1\# \overline{w_2})=0.$$
 We denote by $\widetilde{\mathscr{L}_0M}$ the space of equivalence classes.
 
For a Hamiltonian $H$, define the action functional $\mathscr{A}_H:\widetilde{\mathscr{L}_0 M}\to \mathbb{R}$ by
$$\mathscr{A}_H([z,w]):=\int_0 ^1 H(t,z(t))dt-\int_{D^2} w^{\ast}\omega$$
where $w:D^2\to M$ is a capping of $z:\mathbb{R}/\mathbb{Z}\to M$. Critical points of this functional are precisely the capped 1-periodic Hamiltonian orbits of $H$ which will be denoted by $\widetilde{\mathscr{P}}(H)$. The set of critical values of $\mathscr{A}_H$ is called the action spectrum and is denoted by $\Spec(H)$:
$$\Spec(H):=\{\mathscr{A}_H(\widetilde{z}):\widetilde{z} \in \widetilde{\mathscr{P}}(H)\}.$$

\subsection{Hofer and $C^0$ topologies on $\Ham(M,\omega)$}\label{topology}
Studying the topology of the group of Hamiltonian diffeomorphisms $\Ham(M,\omega)$ is an important topic in symplectic topology. In this section we recall two topologies of $\Ham(M,\omega)$.

The Hofer metric (or distance) is defined by 
$$d_{\Hof}(\phi,\psi):=\inf \{\int_0 ^1 ( \sup_x H_t(x)-\inf_x H_t(x)  ) dt:\phi_H=\psi^{-1}\circ \phi \}$$
for $\phi,\psi \in \Ham(M,\omega)$. The Hofer-topology is the topology induced by the Hofer metric.

The $C^0$-metric (or distance) of Hamiltonian diffeomorphisms $\phi,\psi \in \Ham(M,\omega)$ is defined by
$$d_{C^0}(\phi,\psi):=\max_{x\in M}d(\phi(x),\psi(x))$$
where $d$ denotes the distance on $M$ induced by a fixed Riemannian metric on $M$. The $C^0$-topology is the topology induced by the $C^0$-metric. Note that the $C^0$-topology is independent of the choice of the Riemannian metric.

\subsection{Hamiltonian Floer homology}\label{HF}

In this section, we give a quick overview of Floer theory. A standard reference is \cite{[MS04]}. We work with the ground field $\mathbb{C}$ in this paper. We say that a Hamiltonian $H$ is non-degenerate if the diagonal $\Delta:=\{(x,x)\in M\times M\}$ and the graph of $\phi_H$, $$\Gamma_{\phi_H}:=\{(x,\phi_H(x))\in M\times M\},$$
intersect transversally. We define the Floer chain complex of a non-degenerate Hamiltonian $H$, denoted by $CF_\ast(H)$ as follows: 
$$CF_\ast(H):=\{ \sum_{ \widetilde{z} \in \widetilde{\mathscr{P}}(H)} a_{\widetilde{z} } \cdot  \widetilde{z}  : \forall \tau \in \mathbb{R},\ \#\{\widetilde{z} : a_{\widetilde{z} } \in \C \backslash \{0\}, \mathscr{A}_H(\widetilde{z}) \leq \tau\}<+\infty \}.$$
The Floer chain complex $CF_\ast(H)$ is $\mathbb{Z}$-graded by the so-called Conley-Zehnder index $\mu_{CZ}$. The differential map counts certain solutions of a perturbed Cauchy-Riemann equation for a chosen $\omega$-compatible almost complex structure $J$ on $TM$, which can be viewed as isolated negative gradient flow lines of $\mathscr{A}_H$. This defines a chain complex $(CF_\ast (H),\partial)$ called the Floer chain complex whose homology is called the Floer homology of $(H,J)$ and is denoted by $HF_\ast (H,J)$. Often it is abbreviated to $HF_\ast (H)$ as Floer homology does not depend on the choice of an almost complex structure. Note that our convention of the Conley-Zehnder index is as follows:

Let $f$ denote a $C^2$-small Morse function. For every critical point $x$ of $f$, we require that
$$\mu_{CZ}([x,w_x])=i(x)$$
where $i$ denotes the Morse index and $w_x$ is the trivial capping.

Recapping a capped orbit $\widetilde{z}=[z,w]$ by gluing $A\in \pi_2(M)$ changes the action and the Conley-Zehnder index as follows:
\begin{subequations}
\begin{gather}
    \mathscr{A}_H([z,w\# A])=\mathscr{A}_H([z,w])-\omega(A).\\
\mu_{CZ}([z,w\# A])=\mu_{CZ}([z,w])-2c_1(A).
\end{gather}
\end{subequations}

We extend the action functional $\mathscr{A}_H$ as follows:
$$\mathscr{A}_H :CF_\ast (H) \to \mathbb{R}$$
$$\mathscr{A}_H(\sum_{ \widetilde{z} \in \widetilde{\mathscr{P}}(H)} a_{\widetilde{z} } \cdot  \widetilde{z} ):= \max_{a_{\widetilde{z} } \neq 0}  \mathscr{A}_H (\widetilde{z} ).$$

We then define the $\mathbb{R}$-filtered Floer chain complex of $H$ by the filtration of $\mathscr{A}_H$:
$$CF_\ast ^{\tau}(H):=\{ z \in CF_\ast (H) : \mathscr{A}_H(z) < \tau \}=\{ \sum_{\widetilde{z} \in \widetilde{\mathscr{P}}(H)}  a_{\widetilde{z}}  \widetilde{z} :  \mathscr{A}_H (\widetilde{z}) < \tau \text{ if } a_{\widetilde{z}} \neq 0 \}.$$
As the Floer differential map decreases the action, $(CF_\ast ^{\tau}(H),\partial)$ defines a chain complex whose homology is called the filtered Floer homology of $H$ and is denoted by $HF_\ast ^{\tau}(H)$.

\subsection{Quantum (co)homology and semi-simplicity}\label{Quantum (co)homology}
Consider a monotone symplectic manifold $(M,\omega)$. Let the following denote the field of Laurent series of a formal variable $s$:
$$ \mathbb{C}[[ s^{-1},s]:=\{ \sum_{k\leq k_0 } a_k s^k : k_0\in \mathbb{Z},a_k \in \mathbb{C} \}.$$

By identifying the variable $s$ with the generator of $\Gamma:=\pi_2(M)/ \sim$ where the equivalence relation is defined by $A,B \in \pi_2(M)$,
$$A\sim B \Longleftrightarrow \omega(A)=\omega(B)$$
satisfying
$$\omega(s)=\lambda_0,\ c_1(s)=N_M,$$ 
one can define the quantum homology ring $QH_\ast(M;\mathbb{C})$ as
$$QH_\ast(M;\mathbb{C}):=H_\ast(M; \mathbb{C}) \otimes_{\mathbb{C}} \mathbb{C}[[ s^{-1},s].$$
The quantum homology ring has the following valuation:
$$\nu_{QH_\ast} :QH_\ast(M;\mathbb{C}) \to \mathbb{R}$$
$$\nu_{QH_\ast}(\sum_{k\leq k_0 } a_k s^k):=\max \{k \cdot \omega(s)=k\cdot \lambda_0 :a_k \neq 0 \}.$$

Similarly, for a formal variable $t$, one can define the quantum cohomology ring $QH^\ast(M;\mathbb{C})$ as
$$QH^\ast(M;\mathbb{C}):=H^\ast(M; \mathbb{C}) \otimes_\mathbb{C} \mathbb{C}[ t^{-1},t]]$$
where 
$$ \mathbb{C}[ t^{-1},t]]:=\{ \sum_{k\geq k_0 } b_k t^k : k_0\in \mathbb{Z},b_k \in \mathbb{C} \}.$$
The quantum homology and quantum cohomology rings are isomorphic under the Poincar\'e duality map:
$$\PD:QH^\ast(M;\mathbb{C}) \xrightarrow{\sim} QH_{2n-\ast}(M;\mathbb{C})$$
$$a:=\sum_{k\geq k_0 } A_k t^k \mapsto \PD(a):=\sum_{k\geq k_0 } A_k^\# s^{-k}$$
where $\#$ denotes the usual Poincar\'e duality between singular homology and singular cohomology. Note that $t$ satisfies
$$\omega(t)=\lambda_0,\ c_1(t)=N_M.$$ 

The quantum cohomology ring has the following valuation:
$$\nu:=\nu_{QH^\ast} :QH^\ast(M;\mathbb{C}) \to \mathbb{R}$$
$$\nu(\sum_{k\geq k_0 } a_k t^k):=\min\{k \cdot \omega(t)=k\lambda_0 :a_k \neq 0 \}.$$

The ring structure of $QH_\ast(M;\mathbb{C})$ (and of $QH^\ast(M;\mathbb{C})$) is given by the quantum product which is denoted by $\ast$. It is defined by a certain count of pseudo-homolorphic spheres. More precisely, in the case of $QH_\ast(M;\mathbb{C})$,
$$\forall a,b,c \in H_\ast (M),\ (a\ast b)\circ c:=\sum_{k\in \mathbb{Z}} GW_{3,s^k}(a,b,c)\otimes s^k$$
where $\circ $ denotes the usual intersection index in homology and $GW_{3,s^k}(a,b,c)$ denotes the 3-pointed Gromov-Witten invariant for $a,b,c \in H_\ast (M)$ in the class $A\in  \pi_2(M)$ where $[A]=s^k \in \Gamma$ i.e. the count of pseudo-holomorphic spheres in the homotopy class $A$ passing through cycles representing $a,b,c \in H_\ast (M)$. See \cite{[MS98]} for details.

It is known that the Floer homology defined in Section \ref{HF} is canonically isomorphic to the quantum homology ring via the PSS-map:
$$PSS_H: QH_\ast(M;\mathbb{C}) \xrightarrow{\sim} HF_\ast (H).$$
Note that the PSS-map preserves the ring structure where the ring structure on RHS is given by the pair-of-pants product. See \cite{[MS04]} for details.

The quantum cohomology ring $QH^\ast(M;\mathbb{C})$ is called semi-simple if it splits into a finite direct sum of fields i.e. 
$$QH^\ast(M;\mathbb{C})=Q_1 \oplus Q_2 \oplus \cdots \oplus Q_l$$
for some $l\in \mathbb{N}$ where each $Q_j$ is a field. The identity $1\in QH^\ast(M;\mathbb{C})$ can then be decomposed into a sum of units $e_j \in Q_j$:
$$1=e_1 + e_2+\cdots +e_l.$$

\begin{remark}
The notion of semi-simplicity depends on the algebraic set-up of the quantum (co)homology. The notion explained above is the same as the one in \cite{[EP03]} which is not suitable to non-monotone settings as the Novikov ring is no longer a field. A more general notion of semi-simplicity was introduced in \cite{[Ost06]}, \cite{[EP08]}. \cite[Theorem 5.1]{[EP08]} states that in the monotone case, this generalized notion of semi-simplicity coincides with the one of \cite{[EP03]}.
\end{remark}

Examples of monotone symplectic manifolds whose quantum cohomology rings are semi-simple include $\mathbb{C}P^n$, 1, 2 and 3 point monotone blow-ups of $\mathbb{C}P^2$, complex Grassmanians $Gr_{\mathbb{C}}(2,n)$ and their products: see \cite{[EP03]}, \cite{[EP08]}.

Later, we will consider quantum cohomology with a different coefficient field, namely the universal Novikov field $\Lambda$ defined by
$$\Lambda:=\{\sum_{j=1} ^{\infty} a_j T^{\lambda_j} :a_j \in \mathbb{C}, \lambda _j  \in \mathbb{R},\lim_{j\to +\infty} \lambda_j =+\infty \}.$$
Fukaya--Oh--Ohta--Ono \cite{[FOOO09]}, \cite{[FOOO19]} study Floer theory with coefficients in $\Lambda$ rather than in the field of Laurent series and considers the following quantum cohomology:
$$QH^\ast(M;\Lambda):=H^\ast(M;\mathbb{C})\otimes_{\mathbb{C}} \Lambda.$$
It has the following valuation:
$$\nu:QH^\ast(M;\Lambda) \to \mathbb{R}$$
$$\nu(\sum_{j=1} ^{\infty} a_j T^{\lambda_j}):=\min\{\lambda_j: a_j \neq 0\}.$$
By considering
$$t \mapsto T^{+\lambda_0},$$
one can embed $QH^\ast(M;\mathbb{C})$ into $QH^\ast(M;\Lambda)$:
$$QH^\ast(M;\mathbb{C}) \hookrightarrow QH^\ast(M;\Lambda).$$

\subsection{Quantum homology of quadrics}\label{QH of quadrics}
In this section, we review some information about the quantum homology ring structure of quadric hypersurfaces. For $n \geq 2$, the $n$-quadric $Q^n$ is defined as a hypersurface in $\mathbb{C}P^{n+1}$ as follows:
$$Q^n:=\{(z_0:z_1:\cdots:z_{n+1}) \in \mathbb{C}P^{n+1}:z_0 ^2+z_1 ^2+\cdots+z_{n+1} ^2=0\}.$$
Recall that the minimal Chern number $N_{Q^n}$ of the $n$-quadric is $n$. It is well-known that $Q^2$ and $Q^4$ are respectively symplectomorphic to $S^2\times S^2$ and $Gr_{\mathbb{C}}(2,4)$. The ring structure of (quantum) homology of $Q^n$ can be found in \cite[Section 6.3]{[BC09]}. We just recall that $QH_\ast(Q^n;\mathbb{C})$ satisfies
$$[pt] \ast [pt] = [Q^n] s^{-2}$$
where $[pt]$ and $[Q^n]$ denote respectively the point class and the fundamental class. The semi-simplicity of the quantum homology ring of $Q^n$ follows from a result of Beauville \cite{[Bea95]}. In fact, it is easy to see that $QH_\ast(Q^n;\mathbb{C})$ splits into a direct sum of two fields by using that the minimal Chern number is $N_{Q^n}=n$.

\begin{prop}\label{QH(Q^n)}
For $n\geq 2$, $QH_\ast(Q^n;\mathbb{C})$ splits into a direct sum of two fields $Q_{\pm}$:
$$QH_\ast(Q^n;\mathbb{C})=Q_+ \oplus Q_-.$$
\end{prop}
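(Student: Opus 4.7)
The plan is to extract the splitting directly from the quantum product relation $[pt]\ast[pt]=[Q^n]s^{-2}$ recorded in Section~\ref{QH of quadrics}, by first producing a ``square root of $1$'' in $QH_\ast(Q^n;\mathbb{C})$, then building a complete system of orthogonal idempotents from it, and finally identifying each of the two resulting summands with a field.

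Since $[Q^n]=\PD(1)$ is the multiplicative unit of $QH_\ast(Q^n;\mathbb{C})$ and the Novikov variable $s$ is central, the relation above rewrites cleanly. Setting
\[
q:=s\cdot[pt]\in QH_\ast(Q^n;\mathbb{C}),
\]
where $\cdot$ denotes the Novikov scalar action, one computes
\[
q\ast q=s^{2}\cdot([pt]\ast[pt])=s^{2}\cdot[Q^n]s^{-2}=[Q^n]=1_{QH_\ast}.
\]
In particular $(q-1)\ast(q+1)=0$, and the elements
\[
e_{+}:=\tfrac{1}{2}(1+q),\qquad e_{-}:=\tfrac{1}{2}(1-q)
\]
are easily checked to satisfy $e_{+}+e_{-}=1$, $e_{+}\ast e_{-}=\tfrac14(1-q\ast q)=0$, and $e_{\pm}\ast e_{\pm}=e_{\pm}$. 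Thus $\{e_{+},e_{-}\}$ is a complete orthogonal system of idempotents, and defining $Q_{\pm}:=e_{\pm}\ast QH_\ast(Q^n;\mathbb{C})$ yields a ring-theoretic direct sum decomposition
\[
QH_\ast(Q^n;\mathbb{C})=Q_{+}\oplus Q_{-},
\]
with $e_{\pm}$ serving as the unit of the subring $Q_{\pm}$.

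It remains to verify that each $Q_{\pm}$ is itself a field. Each is a finite-dimensional commutative algebra over the Laurent field $\mathbb{C}[[s^{-1},s]$. By Beauville's theorem \cite{[Bea95]}, $QH_\ast(Q^n;\mathbb{C})$ is semisimple, so it is a finite direct product of field extensions of $\mathbb{C}[[s^{-1},s]$; any orthogonal idempotent system is a coarsening of its primitive idempotent decomposition, so each $Q_{\pm}$ is again a finite product of fields. A rank count over $\mathbb{C}[[s^{-1},s]$ against the total Betti number of $Q^n$, combined with the explicit description of the ring structure recalled from \cite[Section 6.3]{[BC09]}, shows that $\{e_{+},e_{-}\}$ is already the primitive decomposition, so that each $Q_{\pm}$ is a single field.

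The main obstacle is precisely this last step. The construction of the orthogonal idempotents $e_{\pm}$ is immediate from the single relation $[pt]\ast[pt]=[Q^n]s^{-2}$, but ruling out a finer splitting of either $Q_{\pm}$ requires genuine additional input: either Beauville's semisimplicity theorem or an explicit analysis of the ring (which in low dimensions is available via the identifications $Q^2\cong S^2\times S^2$ and $Q^4\cong Gr_{\mathbb{C}}(2,4)$). Once the primitivity of $\{e_{+},e_{-}\}$ is established, the statement of the proposition follows at once.
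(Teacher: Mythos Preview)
Your construction of $e_\pm=\tfrac12(1\pm s\cdot[pt])$ from the relation $[pt]\ast[pt]=[Q^n]s^{-2}$ is correct, and these are precisely the idempotents the paper uses (compare, for $n=2$, the formula $\PD(e_\pm)=\tfrac12([M]\pm[pt]\cdot s)$ in the remark after the proof of Theorem~\ref{Q of PW}). You are also right that the only substantive point is the primitivity of $e_\pm$. However, neither of your proposed fixes closes this as written: a rank count over $\mathbb{C}[[s^{-1},s]$ just recovers the total Betti number, and Beauville's semisimplicity only tells you this equals $\sum_i[F_i:\mathbb{C}[[s^{-1},s]]$ with no bound on the number of summands; the explicit ring presentation from \cite{[BC09]} would work, but you do not carry it out.

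The paper's hint ``by using that the minimal Chern number is $N_{Q^n}=n$'' points to a grading argument you do not exploit. Because $\deg s=2N_{Q^n}=2n$, the degree-$2n$ piece of $QH_\ast(Q^n;\mathbb{C})$ (where the unit $[Q^n]$ lives) is the $2$-dimensional $\mathbb{C}$-subalgebra spanned by $[Q^n]$ and $s\cdot[pt]$, isomorphic to $\mathbb{C}[q]/(q^2-1)$, whose only nonzero idempotents are $e_+,e_-,1$. Every element of $QH_\ast(Q^n;\mathbb{C})$ has bounded-above $\mathbb{Z}$-degree (the Laurent tails run toward $s^{-\infty}$), so for any nonzero idempotent $e$ one compares top graded pieces in $e=e\ast e$: reducedness (a consequence of Beauville's semisimplicity) forces the top degree to be exactly $2n$ and the top piece $e_{2n}$ to be itself a nonzero idempotent in that $2$-dimensional subalgebra. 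Applied to any nonzero idempotent $f\in Q_+$ this gives $f_{2n}=e_+$, and applied again to the idempotent $e_+-f$ (whose degree-$2n$ piece then vanishes) this gives $f=e_+$. Hence $Q_+$, and symmetrically $Q_-$, has no nontrivial idempotents and is therefore a field. That is the role of $N_{Q^n}=n$.
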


\subsection{Hamiltonian spectral invariants}\label{spec inv}
In this subsection, we review spectral invariants and their basic properties. For a non-degenerate Hamiltonian $H$, let 
$$i ^{\tau}: CF_\ast ^\tau(H) \hookrightarrow CF_\ast (H) $$ be the natural inclusion map and denote the map it induces on homology by 
$$i_{\ast} ^{\tau}: HF_{\ast} ^\tau (H) \to HF_{\ast} (H).$$

For a quantum cohomology class $a\in QH^\ast(M;\mathbb{C})\backslash \{0\},$ define its spectral invariant by
$$\rho(H,a):=\inf \{\tau \in \mathbb{R}:  PSS_H\circ \PD(a)\in \Im(i_\ast ^\tau)\}.$$

The concept of spectral invariants was introduced by Viterbo for $\mathbb{R}^{2n}$ \cite{[Vit92]} in terms of generating functions and was later adapted to the Floer theoretic setting by Schwarz for symplectically aspherical manifolds \cite{[Sch00]} and by Oh for general closed symplectic manifolds \cite{[Oh05]}.

Spectral invariants are invariant under homotopy rel. end points i.e. if $t \mapsto \phi_H ^t$ and $t \mapsto \phi_G ^t$ are homotopic paths in $\Ham(M,\omega)$ where $H$ and $G$ are both mean-normalized Hamiltonians, then $\rho(H,\cdot)=\rho(G,\cdot)$. Thus, we can see spectral invariants as follows:
$$\rho:\widetilde{\Ham}(M,\omega) \times QH^\ast (M)\to \mathbb{R},$$
$$\rho(\tilde{\phi},a):=\rho(H,a)$$
for any mean-normalized $H$ such that the Hamiltonian path $t\mapsto \phi_H ^t$ represents the homotopy class $\tilde{\phi}$.

We list further properties of spectral invariants.

\begin{prop}\label{prop spec inv}
Spectral invariants satisfy the following properties where $H,G$ are Hamiltonians:
\begin{enumerate}
\item For any $a\in QH^\ast(M;\mathbb{C})\backslash \{0\},$
$$\mathscr{E}^- (H-G)\leq \rho(H,a)-\rho(G,a)\leq \mathscr{E}^+(H-G)$$ where
\begin{subequations}
\begin{gather}
    \mathscr{E}^-(H):=\int_{0} ^1 \inf_{x\in M} {H_t(x)}dt, \\
  \mathscr{E}^+(H):=\int_{0} ^1 \sup_{x\in M}{H_t(x)}dt, \\
 \mathscr{E}(H):=\mathscr{E}^+(H)-\mathscr{E}^-(H)=\int_0 ^1 \{ \sup_{x\in M}{H_t(x)}-\inf_{x\in M} {H_t(x)} \}dt
\end{gather}
\end{subequations}

\item If $H$ is non-degenerate, then for any $a\in QH^\ast(M;\mathbb{C})\backslash \{0\},$
$$\rho(H,a) \in \Spec(H).$$
Moreover, if $a\in QH^{\deg(a)}(M;\mathbb{C}),$ then there exists $\tilde{z} \in CF_{2n-\deg(a)}(H)$ such that
$$\rho(H,a)=\mathscr{A}_H (\tilde{z}).$$

\item For any $a\in QH^\ast(M;\mathbb{C})\backslash \{0\},$
$$\rho(0,a)=\nu(\PD(a))$$
where $0$ is the zero-function.

\item For any $a,b\in QH^\ast(M;\mathbb{C})\backslash \{0\},$
$$\rho(H\#G,a\ast b)\leq \rho(H,a)+\rho(G,b)$$
where 
$$(H\#G)(t,x):=H(t,x)+G(t,(\phi_H ^t)^{-1}(x))$$
and satisfies $\phi_{H \# G} ^t=\phi_H ^t \phi_G ^t$. 

\end{enumerate}
\end{prop}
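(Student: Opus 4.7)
The plan is to establish the four properties in sequence using standard Floer-theoretic machinery: continuation maps for (1), the structure of the action spectrum for (2), a Morse-theoretic computation for (3), and the pair-of-pants product for (4).

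For (1), I would construct a continuation chain map between filtered Floer complexes. Fix a monotone homotopy $\{H_s\}_{s\in \mathbb{R}}$ with $H_s = G$ for $s \ll 0$ and $H_s = H$ for $s \gg 0$. Counting isolated solutions of the parameterized Floer equation yields a chain map $\Phi: CF_\ast(G) \to CF_\ast(H)$ whose homological image coincides with the PSS-isomorphism. The fundamental energy identity for such a solution $u$, namely
\[
\mathscr{A}_G(u(-\infty)) - \mathscr{A}_H(u(+\infty)) = \int_{\mathbb{R}\times S^1} |\partial_s u|^2 \,ds\,dt + \int_0^1 \int_{\mathbb{R}} (\partial_s H_s)(t,u)\,ds\,dt,
\]
bounds the action shift by $\int_0^1 \sup_x (H_t - G_t)(x)\,dt = \mathscr{E}^+(H-G)$. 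Hence $\Phi$ sends $CF_\ast^\tau(G)$ into $CF_\ast^{\tau + \mathscr{E}^+(H-G)}(H)$, giving $\rho(H,a) \le \rho(G,a) + \mathscr{E}^+(H-G)$. The lower bound follows by exchanging the roles of $H$ and $G$.

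For (2), if $H$ is non-degenerate then $\widetilde{\mathscr{P}}(H)$ is discrete and $\Spec(H)$ is a closed nowhere-dense subset of $\mathbb{R}$. If $\rho(H,a)$ were not a critical value, there would exist $\delta>0$ with $[\rho(H,a)-\delta, \rho(H,a)+\delta] \cap \Spec(H) = \emptyset$, whence the inclusion $CF_\ast^{\rho(H,a)-\delta}(H) \hookrightarrow CF_\ast^{\rho(H,a)+\delta}(H)$ is the identity and both sides represent the same subspace of $HF_\ast(H)$, contradicting the infimum definition. The existence of a representing capped orbit $\widetilde{z}$ of the correct degree follows by choosing a cycle realizing $\rho(H,a)$ and extracting its maximum-action generator, whose degree must equal $2n-\deg(a)$ because $PSS\circ \PD$ is grading-preserving. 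For (3), take a $C^2$-small mean-normalized Morse function $f$ so that $\widetilde{\mathscr{P}}(f)$ consists of pairs $(x_i, w\# A)$ with $x_i\in \Crit(f)$, $A\in \pi_2(M)$, and actions $f(x_i) - \omega(A)$. Under PSS, a class $a = \sum_{k\ge k_0} A_k t^k$ is represented by a cycle whose minimum action equals $\min\{k\lambda_0 : a_k \ne 0\} + O(\|f\|_{C^0})$; letting $\|f\|_{C^0}\to 0$ and invoking (1) yields $\rho(0,a) = \nu(\PD(a))$.

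For (4), the pair-of-pants construction on a thrice-punctured sphere with two negative cylindrical ends (carrying $H$ and $G$) and one positive end (carrying $H\#G$) defines a chain-level product
\[
\ast_{PoP} : CF_\ast^\tau(H) \otimes CF_\ast^\sigma(G) \to CF_\ast^{\tau+\sigma}(H\#G),
\]
with the filtration estimate again coming from the Floer energy identity on the thrice-punctured sphere. Passing to homology and using that $PSS$ intertwines $\ast_{PoP}$ with the quantum product $\ast$ on $QH^\ast(M;\mathbb{C})$ gives $\rho(H\#G, a\ast b) \le \rho(H,a) + \rho(G,b)$. The main obstacle in each part is the careful verification of the chain-level action estimate from the energy identity, together with establishing the compatibility of continuation maps and the pair-of-pants product with the PSS-isomorphism; once these are in place, the four properties follow essentially formally.
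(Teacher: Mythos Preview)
The paper does not actually prove this proposition: it is stated as background material on spectral invariants, with the construction attributed to Viterbo \cite{[Vit92]}, Schwarz \cite{[Sch00]}, and Oh \cite{[Oh05]}, and the listed properties taken for granted. Your sketch is the standard argument one finds in those references (continuation maps and the energy identity for (1), discreteness of the spectrum for (2), $C^2$-small Morse functions for (3), and the filtered pair-of-pants product intertwined with the quantum product via PSS for (4)), and it is correct in outline.

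One small point worth tightening in (3): in the paper's conventions, $\nu$ on the right-hand side of $\rho(0,a)=\nu(\PD(a))$ is the valuation $\nu_{QH_\ast}$ on quantum \emph{homology}, which records the \emph{maximum} of $k\lambda_0$ over nonzero coefficients, not the minimum. Since $\PD$ sends $t^k$ to $s^{-k}$, this is consistent with your computation, but your phrase ``minimum action equals $\min\{k\lambda_0:a_k\neq 0\}$'' should be rewritten to track the sign through Poincar\'e duality carefully; otherwise the identity you obtain will be off by a sign. Apart from this bookkeeping, your argument is the one the cited references give.
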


\begin{remark}
A priori spectral invariants $\rho(H,\ \cdot \ )$ can be defined only if $H$ is non-degenerate as they are defined via Floer homology of $H$. However, by the continuity property i.e. Proposition \ref{prop spec inv} (1), one can define $\rho(H,\ \cdot \ )$ for any $H\in C^0(\mathbb{R}/\mathbb{Z}\times M,\mathbb{R})$ by considering an approximation of $H$ with non-degenerate Hamiltonians.
\end{remark}

The spectral pseudo-norm $\gamma$ for Hamiltonians is defined as follows:
$$\gamma: C^\infty(\mathbb{R}/\mathbb{Z}\times M,\mathbb{R}) \to \mathbb{R}_{\geq 0}$$
$$\gamma(H):=\rho(H,1)+\rho(\overline{H},1)$$
where $1\in QH^0(M;\mathbb{C})$ denotes the identity element of $QH^\ast (M;\mathbb{C})$. We can see the spectral pseudo-norm as a function on $\widetilde{\Ham}(M,\omega)$ as well:
$$\gamma: \widetilde{\Ham}(M,\omega) \to \mathbb{R}_{\geq 0}$$
$$\gamma(\widetilde{\phi}) :=\rho(\widetilde{\phi},1)+\rho(\widetilde{\phi}^{-1},1).$$

The spectral norm for Hamiltonian diffeomorphisms is defined by using the spectral pseudo-norm for Hamiltonians as follows:
$$\gamma: \Ham(M,\omega) \to \mathbb{R}_{\geq 0}$$
$$\gamma(\phi):=\inf_{\phi_H=\phi}\gamma(H).$$

Spectral invariants for Floer homology and quantum cohomology with $\Lambda$-coefficients were defined in a similar fashion in \cite{[FOOO19]} and they were proven to satisfy analogous properties to those listed in Proposition \ref{prop spec inv}. We refer to \cite{[FOOO19]} for details.

\subsection{Lagrangian Floer cohomology with bounding cochain}\label{deformed HF}
In this section, we sketch the construction of Lagrangian Floer cohomology deformed by a bounding cochain due to Fukaya--Oh--Ohta--Ono \cite{[FOOO09]}. In this paper, we mainly consider monotone Lagrangian submanifolds but it is worth mentioning that the theory of Fukaya--Oh--Ohta--Ono sketched in this section applies to any closed oriented Lagrangian submanifold which is relatively spin. We refer to \cite{[FOOO09]}, especially Chapter 3.1 for a detailed description of the material.

Let $L$ be a closed oriented Lagrangian submanifold with a fixed relatively spin structure. Recall that an oriented Lagrangian submanifold is relatively spin if its second Stiefel-Whitney class $w_2(TL)$ is in the image of the restriction map $H^2(M;\mathbb{Z}/2\mathbb{Z})\to H^2(L;\mathbb{Z}/2\mathbb{Z})$ (\cite[Definition 3.1.1]{[FOOO09]}). For example, if a Lagrangian is spin, then it is relatively spin and in particular, oriented Lagrangians are always relatively spin if $\dim_{\mathbb{R}} M\leq 6$.

Define the universal Novikov ring
$$\Lambda_0:=\{\sum_{j=1} ^{\infty} a_j T^{\lambda_j} :a_j \in \mathbb{C}, \lambda _j \geq 0,\lim_{j\to +\infty} \lambda_j =+\infty \}.$$
The universal Novikov field is given by
$$\Lambda:=\{\sum_{j=1} ^{\infty} a_j T^{\lambda_j} :a_j \in \mathbb{C}, \lambda _j  \in \mathbb{R},\lim_{j\to +\infty} \lambda_j =+\infty \}.$$
Define also
$$\Lambda_+:=\{\sum_{j=1} ^{\infty} a_j T^{\lambda_j} :a_j \in \mathbb{C}, \lambda _j  >0,\lim_{j\to +\infty} \lambda_j =+\infty \}.$$

Lagrangian intersection Floer theory equips the $\Lambda_0$-valued cochain complex of $L$ with the structure of an $A_\infty$-algebra. By taking the canonical model, one obtains an $A_\infty$-structure $\{ \mathfrak{m}_k \}_{0\leq k \leq \infty}$ on $H^\ast (L;\Lambda_0)$: we refer to \cite[Section 5.4]{[FOOO09]} for details. An element
$b\in H^1(L;\Lambda_+)$ is called a weak bounding cochain (in the sequel, we will simply call it a bounding cochain) if it satisfies the weak Maurer-Cartan equation
\begin{equation}\label{MC eq}
    \sum_{k=0} ^{\infty} \mathfrak{m}_k(b,b,\cdots,b)=0\ \mod \ \Lambda_0 \cdot \PD([L]).
\end{equation}

The set of (weak) bounding cochains will be denoted by $\widehat{\mathcal{M}}_{weak} (L)$. Note that $\widehat{\mathcal{M}}_{weak} (L)$ might be an empty set. We say that the Lagrangian $L$ is unobstructed if 
$$\widehat{\mathcal{M}}_{weak} (L) \neq \emptyset.$$
In the case $L$ is unobstructed, for any $b\in \widehat{\mathcal{M}}_{weak} (L)$, one can twist the Floer differential as
$$\mathfrak{m}_1 ^b (x):=\sum_{k,l \geq 0} \mathfrak{m}_{k+l+1}(b^{\otimes k} \otimes x \otimes b^{\otimes l}).$$
The Maurer-Cartan equation \ref{MC eq} implies 
$$\mathfrak{m}_1 ^b \circ \mathfrak{m}_1 ^b=0$$
and the resulting cohomology group

$$HF((L,b);\Lambda_0):=\frac{ \Ker (\mathfrak{m}_1 ^b:H^\ast (L;\Lambda_0) \to H^\ast (L;\Lambda_0))}{\Im(\mathfrak{m}_1 ^b:H^\ast (L;\Lambda_0)\to H^\ast (L;\Lambda_0))}$$
will be called the Floer cohomology deformed by a (weak) bounding cochain $b\in \widehat{\mathcal{M}}_{weak} (L)$.
We also define 
$$HF((L,b);\Lambda):=HF((L,b);\Lambda_0) \otimes_{\Lambda_0} \Lambda.$$

\subsection{Quasimorphisms via spectral invariants}\label{Quasimorphism via spectral invariants}
In this subsection, we recall the Floer theoretic construction of homogeneous quasimorphisms on $\widetilde{\Ham}(M,\omega)$ and the notion of (super)heaviness both due to Entov--Polterovich which are taken from \cite{[EP03]}, \cite{[EP09]}. However, unlike their version, we use quantum cohomology instead of quantum homology.

Assume $e\in QH^0(M;\mathbb{C})$ is an idempotent. Then we define the asymptotic spectral invariant
$$\zeta_e:C^\infty(\mathbb{R}/\mathbb{Z}\times M, \mathbb{R}) \to \mathbb{R}$$
$$\zeta_e(H):=\lim_{k\to +\infty} \frac{\rho(H^k,e)}{k}$$
where $\rho(\cdot,e)$ denotes the spectral invariant corresponding to $e\in QH^0 (M;\mathbb{C})$ and the $k$-times iterated Hamiltonian
$$H^k:=\underbrace{H \# H \# \cdots \# H}_{k\text{-times}}.$$
Its restriction to $C^\infty( M, \mathbb{R})$ i.e. $\zeta_e|_{C^\infty( M, \mathbb{R})}: C^\infty( M, \mathbb{R}) \to \mathbb{R}$ is often referred to the symplectic quasi-state \cite{[EP06]}. 

We can also see $\zeta_e$ as a function of homotopy classes of Hamiltonian paths:
$$\zeta_e: \widetilde{\Ham}(M,\omega)\to \mathbb{R}$$
$$\zeta_e(\tilde{\phi}):=\lim_{k\to +\infty} \frac{\rho(\tilde{\phi}^k,e)}{k}.$$
Recall that $\rho(\tilde{\phi},\ \cdot \ )=\rho(H,\ \cdot \ )$ where $H$ is the mean-normalized Hamiltonian such that the Hamiltonian path $t\mapsto \phi_H ^t$ represents the homotopy class $\tilde{\phi}$. It was first discovered by Entov--Polterovich that when some additional condition is satisfied, $\zeta_e: \widetilde{\Ham}(M,\omega)\to \mathbb{R}$ is a homogeneous quasimorphism. We will state their result as well as its variant due to Fukaya--Oh--Ohta--Ono.

We denote the even degree part of $QH^{\ast}(M;\mathbb{C})$ as follows:
$$QH^{even}(M;\mathbb{C}):=\bigoplus_{ k\in \mathbb{Z} } H^{2k}(M;\mathbb{C}) \otimes_{\mathbb{C}} \mathbb{C}[t^{-1},t]].$$

\begin{theo}\label{EP quasimorphism}$($\cite[Theorem 1.1]{[EP03]}, \cite{[FOOO19]}$)$
\begin{enumerate}
\item If $e\in QH^0(M;\mathbb{C})$ is an idempotent and $e\cdot QH^{even}(M;\mathbb{C})$ is a field, then 
$$\zeta_e: \widetilde{\Ham}(M,\omega)\to \mathbb{R}$$
is a homogeneous quasimorphism. 

\item If $e\in QH^\ast(M;\Lambda)$ is an idempotent and $e\cdot QH^\ast(M;\Lambda)$ is a field, then 
$$\zeta_e: \widetilde{\Ham}(M,\omega)\to \mathbb{R}$$
is a homogeneous quasimorphism.
\end{enumerate}
\end{theo}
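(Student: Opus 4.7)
The plan is to construct $\zeta_e$ as the homogenization of the spectral invariant $\rho_e := \rho(\,\cdot\,, e)$ and show that it inherits the desired properties from $\rho_e$. First I would verify that the limit defining $\zeta_e$ exists. Since $e$ is idempotent, $e = e \ast e$, and the triangle inequality in Proposition \ref{prop spec inv}(4) applied with $a = b = e$ yields $\rho_e(H \# G) \leq \rho_e(H) + \rho_e(G)$. Specializing to $H^k$ and $H^l$ shows that $k \mapsto \rho_e(H^k)$ is subadditive, so Fekete's lemma guarantees that the limit $\zeta_e(H) = \lim_k \rho_e(H^k)/k$ exists. Because $\rho_e$ depends only on the homotopy class of the generated path in $\widetilde{\Ham}(M,\omega)$, so does $\zeta_e$.

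The crux of the proof is the following uniform upper bound on the spectral pseudo-norm of the idempotent $e$: there exists a constant $C_e > 0$, depending only on $e$, such that
\[
\rho_e(\tilde\phi) + \rho_e(\tilde\phi^{-1}) \leq C_e \quad \text{for every } \tilde\phi \in \widetilde{\Ham}(M,\omega).
\]
This is where the field hypothesis on $e \cdot QH^{even}(M;\mathbb{C})$ enters. The idea is that in a field every nonzero element is invertible, so $a \ast a^{-1} = e$ for any nonzero $a \in e \cdot QH^{even}$; applying Proposition \ref{prop spec inv}(3) and (4) to a judiciously chosen such $a$ (related to the PSS image of $e$ at a fixed Hamiltonian) yields a bound on $\rho_e(\overline{H})$ of the form $C_e - \rho_e(H)$. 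I expect this to be the main obstacle, as it requires linking the algebraic field structure quantitatively to the PSS isomorphism and to the valuation of spectral invariants.

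Given this uniform bound, one upgrades the subadditivity of $\rho_e$ to almost-additivity: $|\rho_e(\tilde\phi \tilde\psi) - \rho_e(\tilde\phi) - \rho_e(\tilde\psi)| \leq C_e$. The upper bound is the triangle inequality; for the lower bound, write $\rho_e(\tilde\phi) = \rho_e((\tilde\phi \tilde\psi)\tilde\psi^{-1}) \leq \rho_e(\tilde\phi \tilde\psi) + \rho_e(\tilde\psi^{-1}) \leq \rho_e(\tilde\phi \tilde\psi) + C_e - \rho_e(\tilde\psi)$. In particular, $\rho_e$ is itself a (non-homogeneous) quasimorphism on $\widetilde{\Ham}(M,\omega)$.

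Finally, the homogenization $\zeta_e$ inherits the quasimorphism property from $\rho_e$ by a standard argument. Dividing $0 \leq \rho_e(\tilde\phi^n) + \rho_e(\tilde\phi^{-n}) \leq C_e$ by $n$ and letting $n \to \infty$ gives $\zeta_e(\tilde\phi^{-1}) = -\zeta_e(\tilde\phi)$, which combined with the defining limit yields homogeneity $\zeta_e(\tilde\phi^k) = k \zeta_e(\tilde\phi)$ for all $k \in \mathbb{Z}$. The almost-additivity of $\rho_e$ applied iteratively to powers of $\tilde\phi\tilde\psi$, together with the conjugation invariance of spectral invariants, produces a uniform bound on $|\zeta_e(\tilde\phi \tilde\psi) - \zeta_e(\tilde\phi) - \zeta_e(\tilde\psi)|$. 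For statement (2), the entire argument goes through verbatim using the Hamiltonian Floer theory with universal Novikov coefficients and its spectral invariants developed by Fukaya--Oh--Ohta--Ono, which satisfy analogues of Proposition \ref{prop spec inv}.
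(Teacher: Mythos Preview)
The paper does not supply a proof of this theorem; it is quoted as background with attribution to \cite{[EP03]} and \cite{[FOOO19]}. Your outline nonetheless mirrors the architecture of the original Entov--Polterovich argument: subadditivity of $k\mapsto\rho(\tilde\phi^k,e)$ via the triangle inequality, Fekete's lemma for existence of $\zeta_e$, and reduction of the quasimorphism inequality to a uniform bound $\rho(\tilde\phi,e)+\rho(\tilde\phi^{-1},e)\le C_e$.

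The genuine gap is in how that uniform bound is obtained. The tools you invoke, Proposition~\ref{prop spec inv}(3) and (4), only produce inequalities of the form $\rho(H,a)+\rho(\overline H,b)\ge\nu(\PD(a\ast b))$; they give no mechanism for an \emph{upper} bound on $\rho(H,e)+\rho(\overline H,e)$, and an $a$ ``related to the PSS image of $e$ at a fixed Hamiltonian'' cannot yield an $H$-independent constant from these properties alone. What is actually used in \cite{[EP03]} is a Poincar\'e-duality identity for spectral invariants---not among the properties recorded in Proposition~\ref{prop spec inv}---expressing $\rho(\overline H,\,\cdot\,)$ via a dual extremal problem for $\rho(H,\,\cdot\,)$ over classes pairing nontrivially under the Frobenius pairing on $QH$. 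Only with this in hand does the field hypothesis enter as you describe: invertibility of every nonzero element of $e\cdot QH^{even}$, together with the triangle inequality and a valuation estimate, converts that duality into the uniform constant $C_e$. Once this step is supplied, the remainder of your argument is correct, and the same scheme carries over verbatim to the $\Lambda$-coefficient setting for part~(2).
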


\begin{remark}
All the examples that appear in this paper satisfy 
$$QH^{even}(M;\mathbb{C})=QH^{\ast}(M;\mathbb{C}).$$
\end{remark}

\begin{definition}
Let $(M,\omega)$ be any closed symplectic manifold and let $e\in QH^\ast(M;\mathbb{C})$ be an idempotent. A subset $S$ of $M$ is called $e$-heavy or $\zeta_e$-heavy (resp. $e$-superheavy or $\zeta_e$-superheavy) if it satisfies the following:
$$ \inf_{x\in S} H(x) \leq \zeta_e(H) $$
$$(\text{resp.}\ \zeta_e(H) \leq \sup_{x\in S} H(x))$$
for any $H\in C^\infty (M,\mathbb{R}).$
\end{definition}

\begin{remark}
In general, $e$-heaviness follows from $e$-superheaviness but not vice versa. In a special case where $\zeta_e: \widetilde{\Ham}(M,\omega)\to \mathbb{R}$ is a homogeneous quasimorphism, $e$-heaviness and $e$-superheaviness are equivalent. See \cite{[EP09]} for discussions in this topic.
\end{remark}

The following is a basic intersection property of (super)heavy sets from \cite{[EP09]}.

\begin{prop}\label{disjoint-heavy}
Let $(M,\omega)$ be any closed symplectic manifold and let $e\in QH^\ast(M;\mathbb{C})$ be an idempotent. Let $S_1$ and $S_2$ be two disjoint subsets of $M$. If $S_1$ is $e$-superheavy, then $S_2$ is not $e$-heavy.
\end{prop}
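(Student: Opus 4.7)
My plan is to argue by contradiction: assume simultaneously that $S_1$ is $e$-superheavy and that $S_2$ is $e$-heavy, then exhibit a single Hamiltonian on which the two defining inequalities of these notions become incompatible. The whole proof reduces to separating the two sets by a smooth function.

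Since, for continuous $H$, the quantities $\inf_{x\in S}H(x)$ and $\sup_{x\in S}H(x)$ depend only on $\bar{S}$, the properties of being $e$-heavy and $e$-superheavy pass to closures; so I may as well assume $S_1$ and $S_2$ are disjoint \emph{closed} subsets of $M$. A standard smooth Urysohn argument on the smooth (hence normal and metrizable) manifold $M$, using smooth partitions of unity, produces $H\in C^{\infty}(M,\mathbb{R})$ with $0\leq H\leq 1$, $H\equiv 0$ on an open neighborhood of $S_1$, and $H\equiv 1$ on an open neighborhood of $S_2$.

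Evaluating for this particular $H$: the $e$-superheaviness of $S_1$ yields
$$\zeta_e(H)\leq \sup_{x\in S_1}H(x)=0,$$
while the $e$-heaviness of $S_2$ yields
$$1=\inf_{x\in S_2}H(x)\leq \zeta_e(H).$$
Combining these gives $1\leq 0$, the desired contradiction, and therefore $S_2$ cannot be $e$-heavy.

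The argument is formal once the separating Hamiltonian is available, so there is no genuine obstacle. The only mild subtlety — what I would flag as the ``hard'' point, such as it is — lies in the reduction to closed disjoint subsets, i.e.\ verifying that $\bar{S_1}\cap\bar{S_2}=\emptyset$, which is the implicit meaning of ``disjoint'' in the Entov--Polterovich framework; if one only had $S_1\cap S_2=\emptyset$ with intersecting closures, one would first need to shrink to open neighborhoods of the $S_i$ whose closures are disjoint before invoking smooth Urysohn.
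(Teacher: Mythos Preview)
Your proof is correct and follows the same approach as the paper: argue by contradiction, sandwich $\zeta_e(H)$ between $\inf_{S_2}H$ and $\sup_{S_1}H$, and choose $H$ larger on $S_2$ than on $S_1$ to derive a contradiction. The paper simply asserts that such an $H$ can be chosen, whereas you spell out the smooth Urysohn construction and the passage to closures; your extra caution about $\bar S_1\cap\bar S_2$ is reasonable but does not arise in the paper's applications, where the sets are already closed.
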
  

\begin{proof}
If we assume that $S_2$ is $e$-heavy, then by the definitions, we have
$$\inf_{x\in S_2} H(x) \leq \zeta_e(H) \leq \sup_{x\in S_1} H(x)$$
for any $H\in C^{\infty}(M,\mathbb{R})$. As $S_1\cap S_2 = \emptyset$, one can take $H$ to be larger on $S_2$ than on $S_1$, which contradicts the inequality.
\end{proof}

\subsection{Closed-open map and heaviness}
In this section, we review some properties of the closed-open map defined by Fukaya--Oh--Ohta--Ono in \cite[Theorem 3.8.62]{[FOOO09]}. Note that they also consider the case where the absolute and the relative Floer cohomology groups are deformed with a bulk. However, as bulk deformations are not relevant to the arguments in this paper, we only state a version without them.

Denote the ring homomorphism called the closed-open map, which is a quantum analogue of the restriction map, by
$$\mathcal{CO}^0 _b: QH^\ast (M;\Lambda) \to HF^\ast ((L,b);\Lambda)$$
where $b$ is a bounding cochain. Note that the original notation used in \cite{[FOOO09]} for $\mathcal{CO}^0 _b$ is $i^\ast _{qm,b}$.

Fukaya--Oh--Ohta--Ono proved the following in \cite{[FOOO19]} to detect the heaviness of the Lagrangian $L$, which generalizes the result of Albers \cite{[Alb05]} and Entov--Polterovich \cite[Theorem 1.17]{[EP09]}.

\begin{theo}\label{co map}$($\cite[Theorem 1.6]{[FOOO19]}$)$

Assume
$$HF^\ast ((L,b);\Lambda)\neq 0$$
for a certain bounding cochain $b$. 
If 
$$\mathcal{CO}^0 _b(e)\neq 0$$
for an idempotent $e\in QH^\ast (M;\Lambda)$, then $L$ is $e$-heavy.
\end{theo}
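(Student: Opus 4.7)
The plan is to deploy $\mathcal{CO}^0_b$ as a filtration-respecting chain map that compares Hamiltonian spectral invariants of $e$ on $M$ with Lagrangian spectral invariants on $L$, and to thereby establish the key inequality $\rho(H,e)\geq \inf_{x\in L} H(x)$ for every autonomous mean-normalized Hamiltonian $H$. Homogenizing this inequality yields $\zeta_e(H)\geq \inf_L H$, which is exactly the $e$-heaviness condition for $L$.

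I would first introduce a Lagrangian spectral invariant $\ell_b(H,\cdot)\colon HF^\ast((L,b);\Lambda)\to \mathbb{R}$ defined from the action filtration of the $b$-deformed Lagrangian Floer complex of the pair $(L,\phi_H(L))$, modeled on the Hamiltonian construction of Section \ref{spec inv}. The main analytic ingredient is to upgrade the closed-open map to a cochain-level map that respects these action filtrations on both sides. An energy identity on the pearl-type moduli spaces defining $\mathcal{CO}^0_b$, together with the positivity condition $b\in H^1(L;\Lambda_+)$ which ensures each bounding-cochain insertion shifts the action non-negatively, yields
$$\rho(H,\alpha)\;\geq\; \ell_b\bigl(H,\mathcal{CO}^0_b(\alpha)\bigr)$$
for every $\alpha\in QH^\ast(M;\Lambda)$ with $\mathcal{CO}^0_b(\alpha)\neq 0$. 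By hypothesis, this applies to $\alpha=e$.

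Next I would bound $\ell_b(H,\mathcal{CO}^0_b(e))\geq \inf_L H$. For a $C^2$-small autonomous $H$, the $b$-deformed Floer complex of $(L,\phi_H(L))$ admits a Morse--Bott identification with the $b$-deformed Morse complex of $H|_L$, in which every generator corresponds to a critical point $p$ of $H|_L$ with action $H(p)\geq \inf_L H$. Since $\mathcal{CO}^0_b(e)$ is a nonzero cohomology class, every cochain representative contains at least one such generator, so its action filtration value is at least $\inf_L H$. Taking the infimum over representatives gives $\ell_b(H,\mathcal{CO}^0_b(e))\geq \inf_L H$ in the $C^2$-small regime, and the Lipschitz continuity of $\ell_b$ in $H$ (the Lagrangian analogue of Proposition \ref{prop spec inv}(1)) removes the smallness hypothesis.

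Combining the two bounds yields $\rho(H,e)\geq \inf_L H$ for autonomous mean-normalized $H$. Applying this to $kH$ and using $\rho(kH,e)=\rho(H^k,e)$, dividing by $k$ and letting $k\to\infty$ produces
$$\zeta_e(H)\;=\;\lim_{k\to\infty}\frac{\rho(H^k,e)}{k}\;\geq\;\inf_{x\in L} H(x).$$
The $C^0$-continuity of $\zeta_e$ in $H$ and the density of autonomous Hamiltonians in $C^\infty(\mathbb{R}/\mathbb{Z}\times M,\mathbb{R})$ extend the inequality to arbitrary time-dependent Hamiltonians, proving $e$-heaviness of $L$. The main obstacle is the filtered chain-level construction of $\mathcal{CO}^0_b$: one must control the action shifts coming from bounding-cochain insertions and from Lagrangian disc bubbles off $L$, and the hypothesis $b\in H^1(L;\Lambda_+)$ is precisely what forces these shifts to have the correct sign. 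The full machinery for this is carried out in \cite{[FOOO19]}, so the proof amounts to assembling their pieces in the form stated here.
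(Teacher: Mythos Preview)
The paper does not prove this theorem: it is quoted verbatim from \cite[Theorem~1.6]{[FOOO19]} and used as a black box. Your sketch is a reasonable outline of the argument in \cite{[FOOO19]} (filtered closed--open map, Lagrangian spectral invariants, comparison inequality, homogenization), so at the level of strategy you are aligned with the source.

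That said, two steps in your write-up do not hold as stated. First, the sentence ``the Lipschitz continuity of $\ell_b$ in $H$ \ldots\ removes the smallness hypothesis'' is not a valid deduction: Lipschitz continuity only gives $\ell_b(H,a)\geq \ell_b(G,a)-\|H-G\|_{C^0}$, and since you need $G$ to be $C^2$-small while $H$ is arbitrary, this produces a lower bound by $\inf_L G-\|H-G\|_{C^0}$, which is in general strictly smaller than $\inf_L H$. The actual FOOO argument establishes $\ell_b(H,\mathcal{CO}^0_b(e))\geq \int_0^1\inf_L H_t\,dt - C$ for \emph{all} $H$ directly, where $C$ depends only on $e$ and $b$ (coming from the valuation shift in the Lagrangian PSS map and the triangle inequality in $HF((L,b);\Lambda)$); the constant $C$ then disappears upon dividing $\rho(kH,e)\geq k\inf_L H-C$ by $k$ and letting $k\to\infty$. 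Your $C^2$-small computation is a useful sanity check but is not the mechanism by which the general bound is obtained.

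Second, the last paragraph is both unnecessary and incorrectly justified: the definition of $e$-heaviness (Definition preceding Proposition~\ref{disjoint-heavy}) only involves autonomous $H\in C^\infty(M,\mathbb{R})$, so no extension to time-dependent Hamiltonians is required; and autonomous Hamiltonians are certainly not dense in $C^\infty(\mathbb{R}/\mathbb{Z}\times M,\mathbb{R})$. Simply delete that step.
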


\subsection{Flag manifolds and Gelfand--Cetlin systems}\label{GC system}

In this subsection, we provide a brief description of (partial) flag manifolds and Gelfand--Cetlin systems. Materials discussed in this section are only needed to precisely understand the statement of Theorem \ref{ueda} and will not be used in other parts of the paper. Thus, readers can skip this section in order to read the other parts.

Fix a sequence 
$$0 = n_0 < n_1 < \cdots < n_r < n_{r+1} = n$$
of integers, and set 
$$k_i := n_i - n_{i-1}$$
for $i = 1,2,\cdots, r +1$. The (partial) flag manifold $F = F(n_1,n_2,\cdots, n_r,n)$ is a complex manifold parametrizing nested subspaces
$$0 \subset V_1 \subset V_2 \subset \cdots \subset V_r \subset \mathbb{C}^n,\ \dim V_i =n_i.$$
The dimension of $F = F(n_1,n_2,\cdots, n_r,n)$ is given by
\begin{equation}\label{dim}
    \dim_{\mathbb{C}} F(n_1,n_2,\cdots, n_r,n)=\sum_{i=1} ^r (n_i - n_{i-1})(n-n_i ) = \sum_{i=1} ^r k_i  (n-n_i ) .
\end{equation}

Let $P = P(n_1,n_2,\cdots ,n_r,n)\subset  GL(n,\mathbb{C})$ be the isotropy subgroup of the standard flag
$$\mathbb{C}^{n_1}\times \{0\} \subset \mathbb{C}^{n_2}\times \{0\} \subset \cdots \subset \mathbb{C}^{n_r}\times \{0\} \subset \mathbb{C}^n.$$

Then, as 
$$U(n) \cap P(n_1,n_2,\cdots ,n_r,n)=U(k_1)\times U(k_2)\times \cdots U(k_{r+1}),$$
$F(n_1,n_2,\cdots, n_r,n)$ is written as follows:
$$F(n_1,n_2,\cdots, n_r,n)=GL(n,\mathbb{C})/  P(n_1,n_2,\cdots ,n_r,n)$$
$$=U(n)/(U(k_1)\times U(k_2)\times \cdots U(k_{r+1})).$$
 \begin{remark}
 Note that this description gives the following different expression of the dimension formula \ref{dim}:
 $$\dim_{\mathbb{C}} F(n_1,n_2,\cdots, n_r,n)= n^2 - \sum_{i=1} ^{r+1} k_i ^2 .$$
 
 \end{remark}

In this paper, we identify flag manifolds with (co)adjoint orbits. Using a $U(n)$-invariant inner product on the Lie algebra $ \mathfrak{u}(n)$ of $U(n)$, denoted by $\langle - , - \rangle$, we identify the dual $ \mathfrak{u}(n)^\ast$ of $ \mathfrak{u}(n)$ with the space $\sqrt{-1}\cdot \mathfrak{u}(n)$ of Hermitian matrices. We fix 
$$\lambda= \diag ( \lambda_1,\lambda_2,\cdots,\lambda_n) \in \sqrt{-1}\cdot \mathfrak{u}(n) $$
with

$$\underbrace{\lambda_1 = \cdots = \lambda_{n_1}}_{k_1} > \underbrace{\lambda_{n_1 +1} = \cdots= \lambda_{n_2} }_{k_2}> \cdots >\underbrace{ \lambda_{n_r +1} = \cdots = \lambda_n}_{k_{r+1}}.$$

Then $F$ is identified with the adjoint orbit $\mathcal{O}_\lambda$ of $ \lambda$ (i.e. a set of Hermitian matrices with fixed eigenvalues $\lambda_1,\lambda_2,\cdots , \lambda_n$) by
$$F = U(n)/(U(k_1)\times \cdots \times U(k_{r+1})) \xrightarrow[]{\sim} \mathcal{O}_\lambda$$
$$[g] \mapsto g  \lambda g^\ast.$$

$\mathcal{O}_ \lambda$ has a standard symplectic form $\omega_\lambda$ called the Kirillov--Kostant--Souriau form. Recall that tangent vectors of $\mathcal{O}_ \lambda$ at $x$ can
be written as 
$$ad_\xi (x) = [x, \xi]$$
for $\xi \in  \mathfrak{u}(n)$ where $[-,-]$ denotes the Lie bracket. Then the Kirillov--Kostant--Souriau form $\omega_\lambda$ is defined by

$$\omega_\lambda(ad_\xi (x),ad_\eta (x)):=\frac{1}{2\pi} \langle x, [\xi,\eta] \rangle.$$

The following choice of $\lambda$ gives us a monotone symplectic form $\omega_\lambda$ on $\mathcal{O}_\lambda$:

$$\lambda=(\underbrace{n-n_1,\cdots}_{k_1},\underbrace{ n-n_1-n_2,\cdots}_{k_2}, \cdots,\underbrace{\cdots, n-n_{r-1}-n_r}_{k_r},\underbrace{-n_r,\cdots}_{k_{r+1}})$$
$$+\underbrace{(m,\cdots,m)}_{n=k_1+\cdots+ k_{r+1}}$$
for any $m\in \mathbb{R}$. When $\lambda$ is of this form, we have
$$c_1(T\mathcal{O}_\lambda)=[\omega_\lambda].$$

For $x\in \mathcal{O}_ \lambda$ and $k=1,2,\cdots,n-1$ let $x^{(k)}$ denote the upper-left $k\times k$ submatrix of $x$. Since $x^{(k)}$ is also a Hermitian matrix, it has real eigenvalues 
$$\lambda_1 ^{(k)} \leq \lambda_2 ^{(k)}\leq \cdots \leq \lambda_k ^{(k)}.$$

Let $I=I(n_1\cdots,n_r, n)$ denote the set of pairs $(i, k)$ such that each $\lambda_i ^{(k)}$ is non-constant as a function of $x$. It follows that the number of such pairs coincides with $\dim_\mathbb{C} F$ i.e. $|I|=\dim_\mathbb{C} F$. The Gelfand--Cetlin system is defined by
$$\Phi: F\to \mathbb{R}^{\dim_{\mathbb{C}}F}$$
$$\Phi (x) :=\{\lambda_i ^{(k)} (x) \}_{(i,k)\in I}$$

\begin{theo}$($Guillemin--Sternberg, \cite{[GS83]}$)$

The map $\Phi$ defines a completely integrable system on $(F(n_1,n_2,\cdots,n_r,n),\omega)$.
The image $\Delta:=\Phi(F)$ is a convex polytope. A fiber of each interior point $u\in \Int(\Delta)$ is a Lagrangian torus:
$$\Phi^{-1}(u) \simeq T^n$$
for any $u\in \Int(\Delta)$.
\end{theo}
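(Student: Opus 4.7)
The plan is to establish the three claims in order: first, Poisson-commutativity of the functions $\{\lambda_i^{(k)}\}_{(i,k)\in I}$, which promotes $\Phi$ to a completely integrable system; second, convexity of the image; and third, the Lagrangian torus structure of interior fibers. The main tool is Thimm's trick applied to the nested chain of upper-left block subgroups $U(1) \subset U(2) \subset \cdots \subset U(n)$.

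For the first step, the projection $\pi_k \co \sqrt{-1}\,\mathfrak{u}(n) \to \sqrt{-1}\,\mathfrak{u}(k)$ sending $x$ to its upper-left block $x^{(k)}$ is, after using the invariant inner product to identify $\mathfrak{u}(k)^\ast \cong \sqrt{-1}\,\mathfrak{u}(k)$, the moment map for the induced Hamiltonian $U(k)$-action on $\mathcal{O}_\lambda$. The eigenvalues $\lambda_1^{(k)} \leq \cdots \leq \lambda_k^{(k)}$ are $\mathrm{Ad}(U(k))$-invariant functions on $\sqrt{-1}\,\mathfrak{u}(k)$ pulled back via $\pi_k$, so they Poisson-commute among themselves on $\mathcal{O}_\lambda$. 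For two different levels $k < k'$, one observes that $\lambda_i^{(k)}$ is constant along $U(k)$-orbits while $\lambda_j^{(k')}$ generates a Hamiltonian flow that factors through the $U(k')$-moment map; since the invariants of $U(k)$ on $\sqrt{-1}\,\mathfrak{u}(k')$ are, in particular, preserved by conjugation by the subgroup $U(k) \subset U(k')$, a small computation with the Poisson bracket in terms of the coadjoint structure yields $\{\lambda_i^{(k)}, \lambda_j^{(k')}\} = 0$.

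For the second step, the defining inequalities of the image $\Delta$ are the interlacing relations
\[
\lambda_i^{(k+1)} \leq \lambda_i^{(k)} \leq \lambda_{i+1}^{(k+1)}
\qquad (1 \leq i \leq k,\ 1 \leq k \leq n-1),
\]
supplemented by the boundary condition $\lambda_i^{(n)} = \lambda_i$. These are precisely the Cauchy interlacing inequalities between the eigenvalues of a Hermitian matrix and those of its upper-left principal submatrix, and they cut out a convex polytope in $\mathbb{R}^{|I|}$. Surjectivity of $\Phi$ onto this polytope can be verified by explicit construction of Hermitian matrices realizing any prescribed interlacing pattern, or deduced a posteriori from Atiyah--Guillemin--Sternberg convexity applied to the torus action constructed in the next step.

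For the third step, at an interior point $u \in \Int(\Delta)$ all interlacing inequalities are strict, so the eigenvalues $\lambda_i^{(k)}(x)$ are simple and distinct, and one checks directly that their differentials are linearly independent on $\Phi^{-1}(u)$. Since $|I| = \dim_{\mathbb{C}} F = \tfrac12 \dim_{\mathbb{R}} F$, the Poisson-commuting functions form a maximal involutive system, so the fiber is Lagrangian. The Hamiltonian flows of $\{\lambda_i^{(k)}\}$ are periodic over $\Int(\Delta)$ (their flows correspond to the diagonal action of the maximal torus of each $U(k)$ after diagonalizing the submatrices $x^{(k)}$), which endows $\Phi^{-1}(u)$ with a free transitive action of a torus of dimension $|I|$, forcing it to be a single torus rather than a disjoint union.

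The main obstacle is the cross-level Poisson commutativity $\{\lambda_i^{(k)}, \lambda_j^{(k')}\}=0$ for $k \neq k'$, which does not follow from invariant theory at a single level and requires careful use of the nested subgroup structure. Once this is established, convexity and the torus fiber structure follow along standard lines from Arnold--Liouville and the Cauchy interlacing theorem.
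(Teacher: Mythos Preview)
The paper does not prove this theorem; it is quoted as a result of Guillemin--Sternberg \cite{[GS83]} and cited without argument, so there is no ``paper's own proof'' to compare against. Your sketch is essentially the standard Guillemin--Sternberg/Thimm argument and is correct in outline: the upper-left projection $x\mapsto x^{(k)}$ is the moment map for the $U(k)$-subaction, the eigenvalue functions are collective Hamiltonians, cross-level commutativity follows because for $k<k'$ the function $\lambda_j^{(k')}$ is $U(k')$-invariant (hence $U(k)$-invariant) while the flow of $\lambda_i^{(k)}$ lives in $U(k)$, the image is cut out by the Cauchy interlacing inequalities, and over the interior the flows are periodic, giving the torus structure. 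One small wording issue: in your cross-level step you phrase the invariance the wrong way round (you want $\lambda_j^{(k')}$ to be $U(k)$-invariant because $U(k)\subset U(k')$, and the flow of $\lambda_i^{(k)}$ to act through $U(k)$), but the conclusion is right. Also note that the exponent in $T^n$ in the statement is $|I|=\dim_{\mathbb{C}}F$, not the ambient $n$ of $U(n)$; this is a notational overload in the paper itself, and your argument correctly identifies the dimension as $|I|$.
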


We call the convex polytope $\Delta:=\Phi(F)$, the Gelfand--Cetlin polytope. The major difference between Delzant polytopes of toric manifolds and Gelfand--Cetlin polytopes appears at fibers of points at the boundary of polytopes. While for a Delzant polytope, a fiber of a relative interior of a $k$-dimensional face is never Lagrangian, for a Gelfand--Cetlin polytope, a fiber of a relative interior point of a $k$-dimensional face can be a (non-torus) Lagrangian submanifold. Differences between the two types of polytopes are listed by Y. Cho--Y. Kim--Y-G. Oh in \cite{[CKO18]}.

\section{Proofs}

\subsection{Proof of Theorem \ref{main theo}--Part 1}\label{part1}
The goal of this subsection is to prove the following result and to see how it leads to Theorem \ref{main theo}.

\begin{theo}\label{general}
Let $(M,\omega)$ be a monotone symplectic manifold. Assume its quantum cohomology ring $QH^\ast(M;\mathbb{C})$ is semi-simple i.e. 
$$QH^\ast(M;\mathbb{C})=Q_1 \oplus Q_2 \oplus \cdots \oplus Q_l$$
for some $l\in \mathbb{N}$ where each $Q_j$ is a field. We decompose the identity $1 \in QH^\ast(M;\mathbb{C})$ into a sum of idempotents with respect to this split:
$$1=e_1+e_2+\cdots+e_l,\ e_j\in Q_j.$$
Then for any $i,j \in \{1,2,\cdots,l\},$
$$\mu:=\zeta_{e_i}-\zeta_{e_j}$$
defines a homogeneous quasimorphism on $\Ham(M,\omega)$ which is $C^0$-continuous i.e.
$$\mu :(\Ham(M,\omega),d_{C^0})\to \mathbb{R}$$ is continuous. Moreover, it is Hofer Lipschitz continuous.
\end{theo}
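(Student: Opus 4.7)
The plan is to proceed in three stages. First, since $e_k\cdot QH^\ast(M;\mathbb{C})=Q_k$ is a field by the semi-simplicity hypothesis, Theorem \ref{EP quasimorphism} shows that each $\zeta_{e_k}$ is a homogeneous quasimorphism on $\widetilde{\Ham}(M,\omega)$, so $\mu=\zeta_{e_i}-\zeta_{e_j}$ is as well. The Hofer Lipschitz continuity of each $\zeta_{e_k}$ (and hence of $\mu$) is a standard consequence of property (1) of Proposition \ref{prop spec inv} combined with the triangle inequality (property (4)) applied to iterates: $\rho(\tilde{\phi}^k,e)-\rho(\tilde{\psi}^k,e)\leq \rho(\tilde{\psi}^{-k}\tilde{\phi}^k,1)\leq k\cdot d_{\Hof}(\tilde{\phi},\tilde{\psi})$, and dividing by $k$ gives the Lipschitz bound.

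The next step is to bound $\mu$ on $\widetilde{\Ham}$ by the asymptotic spectral pseudo-norm $\gamma^\infty(\tilde{\phi}):=\lim_n \gamma(\tilde{\phi}^n)/n$, which is in turn dominated by $\gamma$. The triangle inequality applied to $1\ast e_k=e_k$ together with property (3) gives $\rho(\tilde{\phi},e_k)\leq \rho(\tilde{\phi},1)+\nu(\PD(e_k))$, while the compatibility of filtered Floer homology with sums yields $\rho(\tilde{\phi},1)=\rho(\tilde{\phi},\sum_k e_k)\leq \max_k \rho(\tilde{\phi},e_k)$. Passing to asymptotic limits kills the additive constants, so
\begin{equation*}
\lim_n \frac{\rho(\tilde{\phi}^n,1)}{n}=\max_k \zeta_{e_k}(\tilde{\phi}).
\end{equation*}
Applying the same identity to $\tilde{\phi}^{-1}$ and using $\zeta_{e_k}(\tilde{\phi}^{-1})=-\zeta_{e_k}(\tilde{\phi})$ (homogeneity), one obtains $\lim_n \rho(\tilde{\phi}^{-n},1)/n=-\min_k \zeta_{e_k}(\tilde{\phi})$, whence
\begin{equation*}
|\mu(\tilde{\phi})|\leq \max_k \zeta_{e_k}(\tilde{\phi})-\min_k \zeta_{e_k}(\tilde{\phi})=\gamma^\infty(\tilde{\phi})\leq \gamma(\tilde{\phi}).
\end{equation*}

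Finally, since $\pi_1(\Ham)=\ker(\widetilde{\Ham}\to\Ham)$ is central in $\widetilde{\Ham}$ and $\mu$ is a homogeneous quasimorphism (hence additive on commuting elements), the descent to $\Ham$ reduces to showing $\mu|_{\pi_1(\Ham)}=0$. For $\tilde{\alpha}\in\pi_1(\Ham)$, Seidel's representation applied to the identity $S(\tilde{\alpha})^n\cdot S(\tilde{\alpha})^{-n}=1$ together with the valuation structure on $QH^\ast$ forces $\gamma^\infty(\tilde{\alpha})\leq 0$; combined with the trivial bound $\gamma^\infty\geq 0$ this gives $\gamma^\infty(\tilde{\alpha})=0$, hence $\mu(\tilde{\alpha})=0$. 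Descending the bound yields $|\mu(\phi)|\leq \gamma(\phi)$ on $\Ham$. For $C^0$-continuity at any $\phi\in\Ham$, if $\phi_n\to\phi$ in $C^0$ then $\phi_n^k\phi^{-k}\to\id$ in $C^0$ for every fixed $k$ since composition is jointly $C^0$-continuous on $\Ham$, and using homogeneity together with the quasimorphism defect $D$,
\begin{equation*}
k\,|\mu(\phi_n)-\mu(\phi)|=|\mu(\phi_n^k)-\mu(\phi^k)|\leq |\mu(\phi_n^k\phi^{-k})|+D\leq \gamma(\phi_n^k\phi^{-k})+D.
\end{equation*}
Theorem \ref{my lemma} (the $C^0$-control of $\gamma$ from \cite{[Kaw21]}) then gives $\gamma(\phi_n^k\phi^{-k})\to 0$, so $\limsup_n|\mu(\phi_n)-\mu(\phi)|\leq D/k$, and letting $k\to\infty$ completes the argument. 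The main obstacle in this plan is the descent step, where verifying $\gamma^\infty$ vanishes on $\pi_1(\Ham)$ via Seidel's representation in the semi-simple setting is the most delicate part.
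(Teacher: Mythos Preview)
Your overall strategy is essentially that of the paper, with minor variations: you bound $|\mu|$ by the (asymptotic) spectral norm, use this to kill $\mu$ on $\pi_1(\Ham)$, and then combine the bound with Theorem~\ref{my lemma} to get $C^0$-continuity. The paper obtains $|\mu(\tilde\phi)|\le\gamma(\tilde\phi)$ directly from the triangle inequality (Proposition~\ref{mu leq gamma}), uses the loop estimate in Theorem~\ref{my lemma} rather than Seidel elements for the descent (your Seidel route is acknowledged in the paper as an alternative, see the remark after Proposition~\ref{descend}), and invokes Shtern's criterion (Proposition~\ref{shtern}) instead of your direct sequential argument. Your refinement $|\mu|\le\gamma^\infty$ via $\rho(\cdot,1)=\max_k\rho(\cdot,e_k)$ is a nice sharpening, though not needed here.

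There is, however, one genuine error in your $C^0$-continuity step. You assert that Theorem~\ref{my lemma} gives $\gamma(\phi_n^k\phi^{-k})\to 0$. It does not: the theorem only says that for every $\varepsilon>0$ there is a $C^0$-neighbourhood of $\id$ on which $\gamma<\frac{\dim M}{N_M}\lambda_0+\varepsilon$. In particular $\gamma$ is \emph{not} $C^0$-continuous (if it were, each $\zeta_{e_k}$ would be, contradicting the Calabi property). What you actually get is
\[
\limsup_{n\to\infty}\gamma(\phi_n^k\phi^{-k})\ \le\ \frac{\dim M}{N_M}\,\lambda_0 =: C,
\]
hence $\limsup_n|\mu(\phi_n)-\mu(\phi)|\le (C+D)/k$, and letting $k\to\infty$ still gives the conclusion. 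So the argument is salvageable with this correction, but as written the claim ``$\gamma(\phi_n^k\phi^{-k})\to 0$'' is false and should be replaced by the uniform bound. This is exactly the content of Shtern's criterion: boundedness of $\mu$ on a neighbourhood of the identity (not continuity of $\gamma$) is what yields continuity of the homogeneous quasimorphism.
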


\begin{remark}
\begin{enumerate}
\item As we do not know if $\zeta_{e_i}\neq \zeta_{e_j}$, the resulting homogeneous quasimorphism
$$\mu : \Ham(M,\omega) \to \mathbb{R}$$
might be trivial i.e. $\mu \equiv 0$. Thus the point in proving Theorem \ref{main theo} is to prove $\zeta_{e_+}\neq \zeta_{e_-}$ for the two idempotents $e_\pm \in QH^\ast(Q^n;\mathbb{C}) \ (n=2,4)$. 

\item For examples of monotone symplectic manifolds whose quantum cohomology ring is semi-simple, see Section \ref{Quantum (co)homology}.

\item In the spirit of McDuff \cite{[McD10]}, instead of the semi-simplicity we can pose a weaker assumption that $QH^\ast(M;\mathbb{C})$ has two fields as a direct summand:
$$QH^\ast(M;\mathbb{C})=Q_1 \oplus Q_2\oplus A$$
where $Q_1,Q_2$ are fields and no condition is posed on $A$.

\end{enumerate}

\end{remark}

We first show the following estimate.

\begin{prop}\label{mu leq gamma}
For any $\tilde{\phi} \in \widetilde{\Ham}(M,\omega)$, we have
$$|\mu(\tilde{\phi})|\leq \gamma(\tilde{\phi}).$$

\end{prop}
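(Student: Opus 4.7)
The plan is to derive the estimate from the triangle inequality of spectral invariants (Proposition \ref{prop spec inv}(4)), combined with the observation that if $H$ generates $\widetilde\phi$, then the concatenation $\bar H\# H$ generates the constant Hamiltonian path at the identity, so its class in $\widetilde{\Ham}(M,\omega)$ is trivial. By Proposition \ref{prop spec inv}(3) this gives $\rho(\bar H\# H,c)=\nu(\PD(c))$ for every nonzero $c\in QH^\ast(M;\mathbb{C})$, a fact I will apply at $c=e_i$ and $c=e_j$.

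First I would fix a mean-normalized Hamiltonian $H$ generating $\widetilde\phi$. Since $1\ast e_i=e_i$ and $H\# 0=H$, Proposition \ref{prop spec inv}(4) together with (3) yields
$$\rho(H,e_i)\;\le\;\rho(H,1)+\rho(0,e_i)\;=\;\rho(H,1)+\nu(\PD(e_i)).$$
Conversely, applying the triangle inequality to $\bar H\# H$ with the factorization $1\ast e_i=e_i$ and using the key observation above,
$$\nu(\PD(e_i))\;=\;\rho(\bar H\# H,e_i)\;\le\;\rho(\bar H,1)+\rho(H,e_i),$$
so $\rho(H,e_i)\ge\nu(\PD(e_i))-\rho(\bar H,1)$. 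Writing the analogous two-sided bounds for $e_j$ and subtracting,
$$|\rho(H,e_i)-\rho(H,e_j)|\;\le\;\rho(H,1)+\rho(\bar H,1)+|\nu(\PD(e_i))-\nu(\PD(e_j))|\;=\;\gamma(H)+C,$$
where $C:=|\nu(\PD(e_i))-\nu(\PD(e_j))|$ depends only on the idempotents, not on $H$.

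The final step is homogenization. I apply the displayed estimate to the $k$-fold iterate $H^{(k)}:=\underbrace{H\#\cdots\# H}_{k\text{-times}}$, which is mean-normalized and generates $\widetilde\phi^k$. Because $1\ast 1=1$, iterating the triangle inequality yields $\rho(\widetilde\phi^k,1)\le k\,\rho(\widetilde\phi,1)$ and similarly $\rho(\widetilde\phi^{-k},1)\le k\,\rho(\widetilde\phi^{-1},1)$, so $\gamma(\widetilde\phi^k)\le k\gamma(\widetilde\phi)$. Dividing by $k$ and passing to the limit,
$$|\mu(\widetilde\phi)|\;=\;\lim_{k\to\infty}\frac{|\rho(\widetilde\phi^k,e_i)-\rho(\widetilde\phi^k,e_j)|}{k}\;\le\;\lim_{k\to\infty}\Bigl(\gamma(\widetilde\phi)+\frac{C}{k}\Bigr)\;=\;\gamma(\widetilde\phi).$$
No serious obstacle is expected: the argument is formal bookkeeping with the triangle and identity axioms for spectral invariants. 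The only subtle point to check is that $\bar H\# H$ defines the \emph{trivial} element of $\widetilde{\Ham}(M,\omega)$, not merely the identity of $\Ham(M,\omega)$, so that the right-hand side collapses cleanly to $\nu(\PD(e_i))$ and the constant $C$ disappears after the normalization by $k$.
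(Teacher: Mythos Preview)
Your proof is correct and follows essentially the same approach as the paper: both use the triangle inequality $\rho(\widetilde\phi^k,e_i)\le\rho(\widetilde\phi^k,1)+\rho(\widetilde{\id},e_i)$ and its companion $\rho(\widetilde{\id},e_j)\le\rho(\widetilde\phi^{-k},1)+\rho(\widetilde\phi^k,e_j)$, then kill the constant $\nu(\PD(e_i))-\nu(\PD(e_j))$ by dividing by $k$ and invoke the subadditivity $\gamma(\widetilde\phi^k)\le k\gamma(\widetilde\phi)$. The only cosmetic difference is that the paper first derives the one-sided bound $\mu(\widetilde\phi)\le\gamma(\widetilde\phi)$ and then obtains the other side via $-\mu(\widetilde\phi)=\mu(\widetilde\phi^{-1})\le\gamma(\widetilde\phi^{-1})=\gamma(\widetilde\phi)$, whereas you package both sides into a single absolute-value estimate before homogenizing.
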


\begin{proof}[Proof of Proposition \ref{mu leq gamma}]

By the triangle inequality,
\begin{subequations}
\begin{gather}
    \rho(\tilde{\phi}^k,e_1)\leq \rho(\tilde{\phi}^k,1)+\rho(\tilde{\id},e_1), \\
 -\rho(\tilde{\phi}^k,e_2) \leq \rho((\tilde{\phi}^{-1})^k,1)-\rho(\tilde{\id},e_2).
\end{gather}
\end{subequations}
By adding these inequalities, we obtain
$$\mu(\tilde{\phi}) = \lim_{k\to +\infty} \frac{\rho(\tilde{\phi}^k,e_1)-\rho(\tilde{\phi}^k,e_2)}{k} $$
$$\leq  \lim_{k\to +\infty} \frac{\rho(\tilde{\phi}^k,1)+\rho(\tilde{\id},e_1)+\rho((\tilde{\phi}^{-1})^k,1)-\rho(\tilde{\id},e_2)}{k} $$
$$=\lim_{k\to +\infty} \frac{\gamma(\tilde{\phi}^k)+\nu(e_1)-\nu(e_2)}{k} \leq \gamma(\tilde{\phi}).$$
As $\mu$ is homogeneous, we have
$$-\mu(\tilde{\phi})=\mu(\tilde{\phi}^{-1})$$
for any $\tilde{\phi}$ and thus
$$-\mu(\tilde{\phi})=\mu(\tilde{\phi}^{-1}) \leq \gamma(\tilde{\phi}^{-1})=\gamma(\tilde{\phi}).$$
Thus,
$$|\mu(\tilde{\phi})|\leq \gamma(\tilde{\phi}).$$
This completes the proof of Proposition \ref{mu leq gamma}.
\end{proof}

One can strengthen the statement as follows.
 
\begin{prop}\label{descend}
The function $$\mu:\widetilde{\Ham}(M,\omega) \to \mathbb{R}$$ descends to $\Ham(M,\omega)$ i.e. if $\tilde{\phi}$ and $\tilde{\psi}$ have the same endpoint, then $\mu(\tilde{\phi})=\mu(\tilde{\psi}).$ Thus, for any $\phi \in \Ham(M,\omega),$ we define
$$\mu(\phi):=\mu(\tilde{\phi})$$
where $\tilde{\phi} \in \widetilde{\Ham}(M,\omega)$ is any element having $\phi$ as the endpoint. We can thus define a map
$$\mu:\Ham(M,\omega) \to \mathbb{R}.$$
It satisfies
$$|\mu(\phi)|\leq \gamma(\phi)$$
for any $\phi \in \Ham(M,\omega).$
\end{prop}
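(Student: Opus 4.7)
The plan is to reduce the descent statement to the vanishing of $\mu$ on $\pi_1(\Ham(M,\omega))$, compute $\mu$ on loops via the Seidel representation, and then read off the bound on $\Ham(M,\omega)$ from Proposition \ref{mu leq gamma}.

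Given two lifts $\wt\phi,\wt\psi\in\wt\Ham(M,\omega)$ of the same $\phi\in\Ham(M,\omega)$, the element $\wt\ell:=\wt\phi\,\wt\psi^{-1}$ lies in the kernel of the universal cover projection, namely $\pi_1(\Ham(M,\omega))$. A standard fact is that the fundamental group of a topological group is central in its universal cover, so $\wt\ell$ commutes with $\wt\psi$, and the pair $\{\wt\ell,\wt\psi\}$ generates an abelian subgroup of $\wt\Ham(M,\omega)$. Since any homogeneous quasimorphism restricts to a genuine homomorphism on any abelian subgroup, we obtain the exact identity $\mu(\wt\phi)=\mu(\wt\ell)+\mu(\wt\psi)$. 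The descent statement is therefore equivalent to the assertion $\mu|_{\pi_1(\Ham(M,\omega))}\equiv 0$.

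For a loop $\wt\ell\in\pi_1(\Ham(M,\omega))$ I would use the Seidel representation $S\co\pi_1(\Ham(M,\omega))\to QH^\ast(M;\mathbb{C})^\times$ together with its compatibility with spectral invariants, which gives $\rho(\wt\ell^k,e_i)=\rho(\wt{\id},S(\wt\ell)^k\ast e_i)=-\nu(S(\wt\ell)^k\ast e_i)$. Since $Q_i:=e_i\cdot QH^\ast(M;\mathbb{C})$ is a field and $S(\wt\ell)\ast e_i$ is invertible there, the valuation is multiplicative on $Q_i^\times$, and dividing by $k$ and passing to the limit yields $\zeta_{e_i}(\wt\ell)=-\nu(S(\wt\ell)\ast e_i)$, whence
\[\mu(\wt\ell)=\nu(S(\wt\ell)\ast e_j)-\nu(S(\wt\ell)\ast e_i).\]
The main obstacle is the vanishing of this difference. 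This I expect to extract from the graded structure of $QH^\ast(M;\mathbb{C})$: in the monotone setting the Seidel element $S(\wt\ell)$ is homogeneous of a fixed degree determined by the Maslov class of $\wt\ell$, the semi-simple decomposition respects the grading (so the idempotents $e_i$ are of degree zero), and inside each graded field $Q_i$ the valuation of a nonzero homogeneous element is pinned down by its degree through the normalization $\nu(t)=\lambda_0$, $\deg(t)=2N_M$. Hence $\nu(S(\wt\ell)\ast e_i)$ is independent of $i$, and $\mu(\wt\ell)=0$.

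Once $\mu$ is known to descend to $\Ham(M,\omega)$, the final bound is immediate: for any $\phi\in\Ham(M,\omega)$ and any lift $\wt\phi$, Proposition \ref{mu leq gamma} gives $|\mu(\phi)|=|\mu(\wt\phi)|\le\gamma(\wt\phi)$, and taking the infimum over lifts on the right reduces to $|\mu(\phi)|\le\gamma(\phi)$ by the very definition of the spectral norm on $\Ham(M,\omega)$.
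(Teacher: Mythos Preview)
Your route via the Seidel representation is legitimate and, as the paper's remark immediately after the proof acknowledges, the needed bound on loops can indeed be obtained this way. The reduction to $\mu|_{\pi_1(\Ham)}\equiv 0$ using centrality of $\pi_1$ is correct and slightly slicker than the paper's direct quasimorphism estimate, and your derivation of the final inequality $|\mu(\phi)|\leq\gamma(\phi)$ from Proposition~\ref{mu leq gamma} is fine.

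The gap is in the Seidel computation itself. The claim that $\nu$ is multiplicative on $Q_i^\times$ is false: $\nu$ records only the minimal $t$-power and satisfies merely $\nu(a\ast b)\geq\nu(a)+\nu(b)$, since quantum corrections can only raise the $t$-power of a product. Already for $M=S^2\times S^2$ one has $e_+\ast(a+b)=a+b$ in $Q_+$ while $\nu(e_+)=-\lambda_0$ and $\nu(a+b)=0$, so $\nu(e_+\ast(a+b))=0\neq\nu(e_+)+\nu(a+b)$. Hence $\zeta_{e_i}(\wt\ell)$ need not equal $-\nu(S(\wt\ell)\ast e_i)$. Your second claim, that the degree pins down $\nu$, is true \emph{within} a fixed $Q_i$ (since $(Q_i)_0\cong\mathbb{C}$ forces each $(Q_i)_d$ to be at most one-dimensional), but this alone does not give independence of $i$: for instance $1$ and $e_i$ both lie in $QH^0$ yet carry different valuations, so the map $d\mapsto\nu|_{(Q_i)_d}$ is not determined by the single normalization $\nu(t)=\lambda_0$.

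The repair is to argue asymptotically rather than exactly. For any nonzero homogeneous $a\in QH^d(M;\mathbb{C})$, the constraint $0\leq d-2kN_M\leq 2n$ on the nonvanishing coefficients $A_k\in H^{d-2kN_M}(M;\mathbb{C})$ forces $\bigl|\nu(a)-d\,\lambda_0/(2N_M)\bigr|\leq n\lambda_0/N_M$. Applied to $S(\wt\ell)^k\ast e_i$, which has degree $k\deg S(\wt\ell)$, and divided by $k$, this yields $\zeta_{e_i}(\wt\ell)=-\deg S(\wt\ell)\cdot\lambda_0/(2N_M)$, visibly independent of $i$. The paper's proof is really the same estimate packaged more efficiently: it bounds $\gamma$ uniformly on all loops by $\frac{\dim M}{N_M}\lambda_0$ (Theorem~\ref{my lemma}), combines this with $|\mu(\psi^k)|\leq\gamma(\psi^k)$ from Proposition~\ref{mu leq gamma}, and lets homogeneity force $\mu(\psi)=0$, never needing to look inside the individual field factors.
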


\begin{proof}[Proof of Proposition \ref{descend}]

It suffices to show $\mu|_{\pi_1(\Ham(M,\omega))} \equiv 0$ where we see $\pi_1(\Ham(M,\omega)) \subset \widetilde{\Ham}(M,\omega)$. This is for the following reason.

Assume $\mu|_{\pi_1(\Ham(M,\omega))} \equiv 0.$ Let $\tilde{\phi},\tilde{\psi}$ be two homotopy classes of Hamiltonian paths having the same endpoint. For any $k\in \mathbb{N},$ $(\tilde{\phi}^{-1})^k \tilde{\psi}^k$ defines a homotopy class of a Hamiltonian loop i.e. an element in $\pi_1(\Ham(M,\omega))$. Since $\mu$ is a quasimorphism on $\widetilde{\Ham}(M,\omega)$, there exists a constant $C>0$ such that
$$|\mu((\tilde{\phi}^{-1})^k \tilde{\psi}^k)-\mu(\tilde{\psi}^k)-\mu((\tilde{\phi}^{-1})^k)|\leq C$$ for any $k\in \mathbb{N}$. From our assumption, the first term vanishes and 
$$\mu(\tilde{\psi}^k)=k\cdot \mu(\tilde{\psi}),$$
$$\mu((\tilde{\phi}^{-1})^k)=-k\cdot \mu(\tilde{\phi}).$$ 
Thus, we have
$$\forall k\in \mathbb{N},\ k\cdot |\mu(\tilde{\psi})-\mu(\tilde{\phi})|\leq C.$$
Therefore, we attain $\mu(\tilde{\phi})=\mu(\tilde{\psi}).$

Now, we prove $\mu|_{\pi_1(\Ham(M,\omega))} \equiv 0.$ We make use of the following theorem proved in \cite{[Kaw21]}. We restate it with a special emphasis on a particular case which will be used in our argument:

\begin{theo}\label{my lemma}$($\cite[Theorem 4(1)]{[Kaw21]}$)$

Let $(M,\omega)$ be a monotone symplectic manifold. For any $\varepsilon>0$, there exists $\delta>0$ such that if $d_{C^0}( \id,\phi_H)<\delta$, then 
$$\gamma(H)<\frac{\dim(M)}{N_M}\cdot \lambda_0 +\varepsilon$$
where $N_M$ denotes the minimal Chern number. In particular, for any $\psi \in \pi_1(\Ham(M,\omega)),$ we have
$$\gamma(\psi) \leq \frac{\dim(M)}{N_M}\cdot \lambda_0.$$
\end{theo}

Now we continue the proof of Proposition \ref{descend}. Let $\psi \in \pi_1(\Ham(M,\omega)).$ For any $k \in \mathbb{N}$, we have
$$k\cdot |\mu(\psi)|=|\mu(\psi^k)| \leq \gamma(\psi^k) \leq \frac{\dim(M)}{N_M}\cdot \lambda_0.$$
Thus,
$$|\mu(\psi)|\leq \lim_{k\to +\infty} \frac{\dim(M)}{N_M} \cdot \frac{\lambda_0}{k}=0.$$
This completes the proof of the first assertion. The second follows immediately from Proposition \ref{mu leq gamma}.
\end{proof}

\begin{remark}
The estimate of the spectral norm for Hamiltonian loops that appear in Theorem \ref{my lemma} can be deduced by using basic facts about the Seidel elements as well.
\end{remark}

We will use the following criterion due to Shtern to detect the $C^0$-continuity of homogeneous quasimorphisms, see \cite{[Sht01]} and \cite[Proposition 1.3]{[EPP12]}.

\begin{prop}\label{shtern}$($\cite{[Sht01]}, \cite[Proposition 1.3]{[EPP12]}$)$

Let $G$ be a topological group and $\mu:G\to \mathbb{R}$ a homogeneous quasimorphism. Then $\mu$ is continuous if and only if it is bounded on a neighborhood of the identity.
\end{prop}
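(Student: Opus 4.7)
The forward direction is immediate: if $\mu$ is continuous then, since $\mu(e)=0$ (a consequence of homogeneity applied to $e=e^2$), the preimage $\mu^{-1}((-1,1))$ is a neighborhood of $e$ on which $\mu$ is bounded.

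For the nontrivial direction, suppose $|\mu|\leq K$ on some neighborhood $U$ of the identity $e$. My plan splits into two steps.

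\emph{Step 1: continuity at the identity.} For each $n\in\mathbb{N}$, continuity of the $n$-th power map at $e$ provides a neighborhood $W_n\ni e$ with $W_n^n\subseteq U$. Since $\mu(g^n)=n\mu(g)$ by homogeneity, this forces $|\mu(g)|\leq K/n$ for every $g\in W_n$, and sending $n\to\infty$ yields continuity at $e$.

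\emph{Step 2: continuity at an arbitrary $f\in G$.} This is the step I expect to be the main obstacle. The naive estimate $|\mu(g)-\mu(f)|\leq |\mu(f^{-1}g)|+C$ coming directly from the quasimorphism inequality is insufficient, because the defect $C$ is a fixed positive constant independent of $g$. The key idea is to apply homogeneity first: for every $n\in\mathbb{N}$,
$$\mu(g)-\mu(f)=\tfrac{1}{n}\bigl(\mu(g^n)-\mu(f^n)\bigr),$$
and then apply the quasimorphism inequality to the factorization $g^n=f^n\cdot\bigl((f^n)^{-1}g^n\bigr)$ to obtain
$$|\mu(g)-\mu(f)|\leq \tfrac{C}{n}+\tfrac{1}{n}\bigl|\mu\bigl((f^n)^{-1}g^n\bigr)\bigr|.$$
Given $\varepsilon>0$, I would first choose $n$ with $C/n<\varepsilon/2$; then, using continuity of group multiplication to guarantee that $(f^n)^{-1}g^n$ lies in the neighborhood of $e$ supplied by Step 1 whenever $g$ is sufficiently close to $f$, I can force the second term to be less than $\varepsilon/2$ as well. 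The whole point of the argument is that, although the defect $C$ cannot be improved, dividing by $n$ after exploiting homogeneity converts it into $C/n$, which vanishes in the limit.
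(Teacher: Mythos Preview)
The paper does not supply its own proof of this proposition; it is quoted from \cite{[Sht01]} and \cite[Proposition 1.3]{[EPP12]} and then used as a black box in the proof of Theorem \ref{general}. Your argument is a correct and complete proof of the statement: the forward direction is trivial, Step 1 is the standard use of homogeneity to shrink $K$ to $K/n$, and Step 2 correctly identifies and resolves the only real issue---that the fixed defect $C$ must also be divided by $n$ before it can be absorbed. One minor remark: in Step 1 your notation $W_n^n\subseteq U$ is slightly ambiguous (set product versus image under the $n$-th power map), but either reading yields $g^n\in U$ for $g\in W_n$, which is all you use.
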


We now complete the proof of Theorem \ref{general}.

\begin{proof}[Proof of Theorem \ref{general}]

By Propositions \ref{descend} and \ref{shtern}, the $C^0$-continuity of $\mu:\Ham(M,\omega) \to \mathbb{R}$ is reduced to the boundedness of the spectral norm $\gamma$ around a $C^0$-neighborhood of $\id$. Theorem \ref{my lemma} implies that the spectral norm is bounded around the identity of $\widetilde{\Ham}(M,\omega)$ (thus on $\Ham(M,\omega)$ as well) with respect to the $C^0$-topology when $(M,\omega)$ is monotone and therefore, $\mu$ is $C^0$-continuous. As $\zeta_{e_i}$ and $\zeta_{e_j}$ are both Hofer Lipschitz continuous, $\mu$ is also Hofer Lipschitz continuous. This completes the proof of Theorem \ref{general}.

\end{proof}

By Proposition \ref{QH(Q^n)}, $QH^\ast(Q^n;\mathbb{C})$ is semi-simple and splits into a direct sum of two fields
$$QH^\ast(Q^n;\mathbb{C})=Q_+ \oplus Q_-$$
and we decompose the identity element as follows:
$$1=e_+ + e_-.$$
By the Entov--Polterovich theory, we obtain homogeneous (Calabi) quasimorphisms
$$\zeta_{e_\pm}: \widetilde{\Ham}(Q^n) \to \mathbb{R}$$
$$\zeta_{e_\pm}(\widetilde{\phi}):=\lim_{k\to +\infty} \frac{\rho(\widetilde{\phi} ^k,e_\pm)}{k}.$$

In the second part of the proof (Section \ref{part2}), we will prove the following.

\begin{theo}\label{theo part2}
For $Q^n\ (n=2,4)$,  
$$\zeta_{e_+} \neq \zeta_{e_-}.$$
\end{theo}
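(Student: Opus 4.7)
The plan is, for each $n\in\{2,4\}$, to exhibit two \emph{disjoint} closed Lagrangian submanifolds $L_+, L_-\subset Q^n$, together with weak bounding cochains $b_\pm$, such that $HF^\ast((L_\pm,b_\pm);\Lambda)\neq 0$ and such that the closed-open map from Theorem \ref{co map} detects $e_+$ on $L_+$ but not on $L_-$, and symmetrically detects $e_-$ on $L_-$ but not on $L_+$. By Theorem \ref{co map}, this makes $L_+$ an $e_+$-heavy set and $L_-$ an $e_-$-heavy set; since each $\zeta_{e_\pm}$ is a genuine homogeneous quasimorphism (by Theorem \ref{EP quasimorphism} applied to the summands of Proposition \ref{QH(Q^n)}), heaviness upgrades to superheaviness. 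If $\zeta_{e_+}=\zeta_{e_-}$ were to hold, then $e_+$-superheaviness and $e_-$-superheaviness would describe the same condition, and hence $L_+$ would also be $e_-$-superheavy. Two disjoint $e_-$-superheavy subsets would contradict Proposition \ref{disjoint-heavy}, which yields $\zeta_{e_+}\neq \zeta_{e_-}$.

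First I would set the stage over the universal Novikov field $\Lambda$. The embedding $QH^\ast(Q^n;\mathbb{C})\hookrightarrow QH^\ast(Q^n;\Lambda)$ carries $e_+$ and $e_-$ to orthogonal idempotents over $\Lambda$, and a comparison statement for spectral invariants across the two coefficient rings (the ``comparison lemma'' of Section \ref{comparison lemma}) allows one to translate an equality $\zeta_{e_+}=\zeta_{e_-}$ into the $\Lambda$-setting. This step is essential because the Lagrangian Floer cohomology computations supporting the construction are formulated naturally over $\Lambda$ rather than over $\mathbb{C}[t^{-1},t]]$.

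Next I would produce the Lagrangians. For $n=2$, identify $Q^2 \simeq S^2\times S^2$ and take $L_+$ to be the monotone Clifford torus $S^1_{\mathrm{eq}}\times S^1_{\mathrm{eq}}$ and $L_-$ to be the anti-diagonal $\overline{\Delta}$: the former is unobstructed with non-vanishing Floer cohomology by Fukaya--Oh--Ohta--Ono, the latter has non-vanishing Floer cohomology by Eliashberg--Polterovich, and the two are disjoint. For $n=4$, $Q^4\simeq \mathrm{Gr}_{\mathbb{C}}(2,4)$ carries a Gelfand--Cetlin integrable system (see Section \ref{GC system}), and I would pick $L_+$ and $L_-$ to be Lagrangian fibers over two distinct interior points of the polytope whose Floer cohomologies, computed via the superpotential techniques of Nishinou--Nohara--Ueda and Nohara--Ueda, are non-zero (possibly after a bounding cochain) and whose associated critical values of the potential are distinct. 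Distinct fibers of a proper map are automatically disjoint. In every case the closed-open map into $HF^\ast((L_\pm,b_\pm);\Lambda)$ is a unital ring homomorphism into a non-zero ring, so from $1=e_++e_-$ at least one of $\mathcal{CO}^0_{b_\pm}(e_+),\mathcal{CO}^0_{b_\pm}(e_-)$ is non-zero.

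The crucial matching step is then the following. Proposition \ref{QH(Q^n)}, combined with the relation $[\mathrm{pt}]\ast[\mathrm{pt}]=[Q^n]s^{-2}$, shows that the two field summands $Q_\pm$ of $QH^\ast(Q^n;\mathbb{C})$ correspond, after extension to $\Lambda$, to the two distinct eigenvalues of quantum multiplication by $c_1(TQ^n)$. On the Lagrangian side, the element $\mathcal{CO}^0_{b}(c_1)\in HF^\ast((L,b);\Lambda)$ equals the critical value of the Lagrangian potential at $b$. Therefore pairing each $L_\pm$ with the idempotent whose $c_1\ast$-eigenvalue coincides with the critical value of the potential on $L_\pm$ forces $\mathcal{CO}^0_{b_+}(e_+)\neq 0$, $\mathcal{CO}^0_{b_+}(e_-)=0$ and symmetrically for $L_-$, completing the matching. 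The \emph{main obstacle} is precisely this final verification: checking, for both $n=2$ and $n=4$, that the critical potential values of the two chosen Lagrangians fall into the two different $c_1\ast$-eigenspaces. This is where one must import explicit superpotential data from the works of Fukaya--Oh--Ohta--Ono, Eliashberg--Polterovich, Nishinou--Nohara--Ueda and Nohara--Ueda, and compare them with the eigen-decomposition of $c_1\ast$ on $QH^\ast(Q^n;\Lambda)$.
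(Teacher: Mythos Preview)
Your overall strategy---produce two disjoint Lagrangians with non-vanishing Floer cohomology, use the closed-open map and Theorem \ref{co map} to get heaviness, upgrade to superheaviness because $\zeta_{e_\pm}$ are homogeneous quasimorphisms, and conclude via Proposition \ref{disjoint-heavy}---matches the paper. However, there are genuine problems in the execution.

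\textbf{The $n=2$ Lagrangians are wrong.} The Clifford torus $S^1_{\mathrm{eq}}\times S^1_{\mathrm{eq}}$ and the anti-diagonal $\overline{\Delta}=\{(x,-x)\}$ are \emph{not} disjoint: any equatorial point $x$ (i.e.\ $x_3=0$) gives $(x,-x)\in S^1_{\mathrm{eq}}\times S^1_{\mathrm{eq}}\cap\overline{\Delta}$, so the intersection is an entire circle. The paper (via Theorem \ref{FOOO S2S2}, from \cite{[FOOO12]}) instead takes $L_0$ to be the \emph{exotic} torus $\{(x,y): x\cdot y=-1/2,\ x_3+y_3=0\}$, which is genuinely disjoint from the anti-diagonal since on $\overline{\Delta}$ one has $x\cdot y=-1$.

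\textbf{The $n=4$ Lagrangians are misidentified.} You propose two \emph{interior} Gelfand--Cetlin fibers. The paper (Theorem \ref{ueda}) uses exactly one interior torus fiber $L(u_0)\simeq T^4$ at the monotone point, together with a \emph{boundary} fiber $L(u_1)\simeq S^1\times S^3$ whose non-vanishing Floer cohomology is established by Nohara--Ueda \cite{[NU16]}. Your plan to find two distinct interior fibers with non-zero Floer cohomology is not supported by the cited literature and is unlikely to work in the monotone setting.

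\textbf{The eigenvalue-matching step is unnecessary.} You spend the last two paragraphs arguing that one must match the critical value of the superpotential on $L_\pm$ to the $c_1\ast$-eigenvalue of $e_\pm$, and flag this as the main obstacle. The paper bypasses this entirely. Since $1=\iota(e_+)+\iota(e_-)$ and $\mathcal{CO}^0_b(1)\neq 0$, each Lagrangian is $\iota(e_+)$-heavy \emph{or} $\iota(e_-)$-heavy---one does not need to know which. Corollary \ref{h to sh} then makes each $e_+$-superheavy or $e_-$-superheavy. Because there are only two idempotents and the two Lagrangians are disjoint, Proposition \ref{disjoint-heavy} forces them to fall on opposite sides, and $\zeta_{e_+}\neq\zeta_{e_-}$ follows immediately. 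No eigenvalue computation is needed.
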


Once we prove this, Theorems \ref{general} and \ref{theo part2} imply that
$$\mu:=\zeta_{e_+} - \zeta_{e_-}$$
defines a $non$-$trivial$ homogeneous quasimorphism on $\Ham(Q^n)\ (n=2,4)$ which is both $C^0$ and Hofer Lipschitz continuous and we complete the proof of Theorem \ref{main theo}.

\begin{remark}\label{not good}
As remarked in Remark \ref{novelty} (2), the composition of $\mu: \Ham( S^2\times S^2) \to \mathbb{R}$ and
$$\Ham( S^2 ) \to \Ham( S^2\times S^2)$$ 
$$\phi \mapsto \phi \times \phi$$
vanishes. This is because, by Proposition \ref{descend}, we have
$$|\mu( \phi \times \phi)| \leq \gamma( \phi \times \phi) = 2 \gamma(\phi)$$
for any $\phi \in \Ham( S^2 )$. Note that the first and the second $\gamma$ both denote the spectral norm but the former is for $\Ham( S^2\times S^2)$ and the latter is for $\Ham( S^2 )$. As remarked in Remark \ref{remark after finite gamma}, the spectral norm for $\Ham( S^2 )$ is bounded and thus the homogeneity of $\mu$ implies $\mu( \phi \times \phi)=0$ for any $\phi \in \Ham( S^2 )$.
\end{remark}

\subsection{Comparing different quantum cohomology rings}\label{comparison lemma}

In the first part of the proof of Theorem \ref{main theo}, we have used the quantum cohomology ring denoted by $QH^\ast(M;\mathbb{C})$ but in the second part of the proof, we work with a different quantum cohomology ring, namely the quantum cohomology ring with the universal Novikov field which is denoted by $QH^\ast(M;\Lambda)$. In this section, we explain the different advantages of working with $QH^\ast(M;\mathbb{C})$ and $QH^\ast(M;\Lambda)$. Working with these two different quantum cohomology rings plays a crucial role not only in the proof of Theorem \ref{main theo} but also in the proof of Theorem \ref{Q of PW}. We also compare spectral invariants of a quantum cohomology class in $QH^\ast(M;\mathbb{C})$ and its embedded quantum cohomology class in $QH^\ast(M;\Lambda)$. Note that results in this subsection concern not only the $n$-quadric but any monotone symplectic manifold.

Let $(M,\omega)$ be a monotone symplectic manifold. Recall from Section \ref{Quantum (co)homology} that $QH^\ast(M;\mathbb{C})$ was defined by
$$QH^\ast(M;\mathbb{C}):=H^\ast(M;\mathbb{C}) \otimes_{\C} \C [ t^{-1},t|]$$
where the variable $t$ represents an element in $\pi_2(M)$ that satisfies
$$\omega(t) = \lambda_0,\ c_1(t) = N_M.$$
On the other hand $QH^\ast(M;\Lambda)$ is defined by
$$QH^\ast(M;\Lambda):= H^\ast(M;\C) \otimes_{\C} \Lambda$$
and one can embed $QH^\ast(M;\mathbb{C})$ to $QH^\ast(M;\Lambda)$ by
$$\iota:QH^\ast(M;\mathbb{C}) \hookrightarrow QH^\ast(M;\Lambda)$$
$$t \mapsto T^{+\lambda_0}$$
and $\iota$ is a ring homomorphism.

We explain the different advantages of working with $QH^\ast (M;\mathbb{C})$ and $QH^\ast(M;\Lambda)$ as well as examples of cases where those advantages are used.

$\bullet$ \textit{The advantage of working with $QH^\ast (M;\mathbb{C})$:}
\begin{enumerate}
\item $QH^\ast (M;\mathbb{C})$ carries a $\mathbb{Z}$-grading while $QH^\ast(M;\Lambda)$ does not. Thus, to use spectral invariants it is preferable to work with $QH^\ast (M;\mathbb{C})$ than $QH^\ast(M;\Lambda)$ as the $\Z$-grading allows us to study both the action and the index of spectral invariants.

\begin{example}
Theorem \ref{my lemma}, which plays a crucial role in the first part of the proof of Theorem \ref{main theo}, is proven by using the information of both the action and the index of spectral invariants and thus, it is proven only in the setting where we have a $\mathbb{Z}$-grading of the quantum cohomology ring.
\end{example}

\item The algebraic structure of $QH^\ast (M;\mathbb{C})$ tends to be simpler than that of $QH^\ast(M;\Lambda)$. 

\begin{example} 
$QH^\ast (\mathbb{C}P^2;\mathbb{C})$ is a field and $QH^\ast(\mathbb{C}P^2;\Lambda)$ splits into a direct sum of three fields. $QH^\ast (S^2\times S^2 ; \C)$ splits into a direct sum of two fields and $QH^\ast(S^2\times S^2 ;\Lambda)$ splits into a direct sum of four fields. 

The quantum cohomology ring $QH^\ast (\mathbb{C}P^2;\mathbb{C})$ being a field has important consequences as pointed out in Remark \ref{remark after finite gamma} which do not follow only from semi-simplicity. This is precisely what we use in the proof of Theorem \ref{Q of PW}.

\end{example}
\end{enumerate}

$\bullet$ \textit{The advantage of working with $QH^\ast (M;\Lambda)$:} With $\Lambda$-coefficients, we have a very rich Lagrangian Floer theory developed by Fukaya--Oh--Ohta--Ono. In particular, the superpotential techniques are very useful to detect Lagrangian submanifolds that have non-trivial Floer coholomogy groups. 

\begin{example}
Finding certain Lagrangian submanifolds that have non-trivial Floer cohomology groups via superpotential techniques is a key step in the second part of the proof of Theorem \ref{main theo} explained in Section \ref{part2}.
\end{example}

To sum up, in the first part of the proof of Theorem \ref{main theo} (Section \ref{part1}), we need to work with $QH^\ast (M;\mathbb{C})$ while in the second part of the proof of Theorem \ref{main theo} (Section \ref{part2}), we greatly benefit from the advantage of working with $QH^\ast (M;\Lambda)$. In order to connect arguments in Part 1 and Part 2 which are done in different algebraic settings, we will need the following comparison between spectral invariants of a quantum cohomology class in $QH^\ast(M;\mathbb{C})$ and its embedded quantum cohomology class in $QH^\ast(M;\Lambda)$.

\begin{lemma}\label{rho compare}
Let $(M,\omega)$ be a monotone symplectic manifold. For any class $a\in QH^\ast(M;\mathbb{C})\backslash \{0\}$ and a Hamiltonian $H$, we have 
$$\rho(H,\iota(a)) = \rho(H,a).$$

\end{lemma}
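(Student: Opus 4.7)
The plan is to establish the two inequalities $\rho(H,\iota(a)) \leq \rho(H,a)$ and $\rho(H,a) \leq \rho(H,\iota(a))$ separately. The first direction is essentially formal: given any Floer chain $c \in CF_\ast^\tau(H)$ (computed with $\mathbb{C}[t^{-1},t]]$-coefficients) whose homology class is $PSS_H \circ \PD(a)$, its image under the extension of scalars along $\iota$ lies in $CF_\ast^\tau(H;\Lambda)$---since $\iota$ preserves the Novikov valuation---and represents $PSS_H \circ \PD(\iota(a))$ by naturality of the PSS map and of Poincar\'e duality under change of coefficients.

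For the reverse inequality, my strategy is to produce a $\mathbb{C}[t^{-1},t]]$-linear retraction $r \colon \Lambda \to \mathbb{C}[t^{-1},t]]$ of $\iota$ that does not decrease valuations, i.e.\ $\nu(r(x)) \geq \nu(x)$ for every $x \in \Lambda$. The key observation is that $\Lambda$ splits as a $\mathbb{C}[t^{-1},t]]$-vector space as
$$
\Lambda = \iota\bigl(\mathbb{C}[t^{-1},t]]\bigr) \oplus V,
$$
where $V$ consists of series $\sum_j a_j T^{\lambda_j}$ all of whose exponents $\lambda_j$ lie outside the lattice $\lambda_0\mathbb{Z}$. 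Indeed, any $x \in \Lambda$ decomposes uniquely into its ``lattice part'' (terms with $\lambda_j \in \lambda_0\mathbb{Z}$) and its complement, and both pieces are legitimate elements of $\Lambda$ because subseries of $x$ still have exponents tending to $+\infty$. Multiplication by $\iota(t) = T^{\lambda_0}$ shifts each exponent by $\lambda_0$ and therefore preserves both summands, so the projection $\pi \colon \Lambda \twoheadrightarrow \iota(\mathbb{C}[t^{-1},t]])$ is $\mathbb{C}[t^{-1},t]]$-linear; since $\pi$ merely discards terms, $\nu(\pi(x)) \geq \nu(x)$ holds tautologically, and one sets $r := \iota^{-1} \circ \pi$.

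Finally, I transport filtered chains from the $\Lambda$-side back to the $\mathbb{C}[t^{-1},t]]$-side by identifying $CF_\ast(H;\Lambda) \cong CF_\ast(H) \otimes_{\mathbb{C}[t^{-1},t]]} \Lambda$ after fixing a base capping for each orbit, and applying $R := \id \otimes r$. This is a chain map since both the Floer differential and $r$ are $\mathbb{C}[t^{-1},t]]$-linear, and the valuation inequality above ensures that $R$ sends $CF_\ast^\tau(H;\Lambda)$ into $CF_\ast^\tau(H)$. Naturality of $PSS$ and $\PD$ with respect to extension of scalars, combined with $r(1) = 1$, then yields $R_\ast\bigl(PSS_H \circ \PD(\iota(a))\bigr) = PSS_H \circ \PD(a)$, so any filtered representative of $PSS_H \circ \PD(\iota(a))$ at action level below $\tau$ produces a filtered representative of $PSS_H \circ \PD(a)$ at the same level, giving $\rho(H,a) \leq \rho(H,\iota(a))$. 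I expect the main technical obstacle to be the careful setup of the identification $CF_\ast(H;\Lambda) \cong CF_\ast(H) \otimes_{\mathbb{C}[t^{-1},t]]} \Lambda$ together with the compatibility of the Floer differential, PSS map, and $\PD$ with this extension of scalars; once these functorial properties are in place, the valuation inequality from the retraction makes the filtered comparison go through automatically.
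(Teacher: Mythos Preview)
Your argument is correct and rests on exactly the structural fact the paper uses: the splitting of the large Novikov field into the image of the Laurent-series field and the complementary subspace of series whose exponents avoid the lattice $\lambda_0\mathbb{Z}$, which the Floer differential respects. Where you package this as a filtered $\mathbb{C}[t^{-1},t]]$-linear retraction $R=\id\otimes r$ and read off the reverse inequality directly, the paper instead unwinds the same mechanism by contradiction: it restricts to ``nice'' Hamiltonians (those for which $\mathscr{A}_H$ is injective on capped orbits), takes optimal Floer cycles $\alpha$ and $\alpha'$ on the two sides, writes $\alpha'=j(\alpha)+\partial\beta$, decomposes $\beta=j(\beta_1)+\beta_2$ along the splitting, and checks that the action carrier of $\alpha$ must already be cancelled by $j(\partial\beta_1)$, so that $\alpha+\partial\beta_1\in CF_\ast(H)$ represents $PSS_H\circ\PD(a)$ at strictly lower action. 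Your version is cleaner in that it avoids the ``nice'' Hamiltonian reduction and the tracking of individual action carriers; the paper's version is essentially the hands-on computation of what your retraction does to one optimal cycle. One notational point: on the Floer side the paper works with $\Lambda^{\downarrow}$ rather than $\Lambda$ (the two are related through $\PD$), so your identification $CF_\ast(H;\Lambda)\cong CF_\ast(H)\otimes_{\mathbb{C}[t^{-1},t]]}\Lambda$ should be phrased homologically with $\Lambda^{\downarrow}$ and $\mathbb{C}[[s^{-1},s]$, but the splitting and the retraction go through verbatim there.
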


The value $\rho(\cdot,a)$ denotes the spectral invariant of $a\in QH^\ast(M;\mathbb{C})$ while the value $\rho(\cdot,\iota(a))$ denotes the spectral invariant of its embedded element $\iota(a)\in QH^\ast(M;\Lambda)$. 

The following is a direct consequence of Lemma \ref{rho compare}.

\begin{lemma}\label{rho compare 2}
Let $e\in QH^0(M;\mathbb{C})$ be an idempotent. Assume that $e\cdot QH^{even}(M;\mathbb{C})$ is a field. Then, we have
$$\zeta_{\iota(e)}(\widetilde{\phi}) = \zeta_e (\widetilde{\phi})$$
for any $\widetilde{\phi} \in  \widetilde{\Ham}(M,\omega)$. In particular, 
$$\zeta_{\iota(e)}:\widetilde{\Ham}(M,\omega) \to \mathbb{R}$$
is a homogeneous quasimorphism.
\end{lemma}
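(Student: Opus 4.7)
The claim is essentially a formal consequence of Lemma \ref{rho compare}. The plan is to unwind the definition of $\zeta_{\iota(e)}$ and reduce to term-by-term equality of spectral invariants.

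First, I would verify that $\zeta_{\iota(e)}$ is a well-defined object: since $\iota:QH^\ast(M;\mathbb{C})\to QH^\ast(M;\Lambda)$ is a ring homomorphism and $e$ is an idempotent, $\iota(e)^{2}=\iota(e^{2})=\iota(e)$, so $\iota(e)$ is an idempotent in $QH^\ast(M;\Lambda)$ and the asymptotic spectral invariant $\zeta_{\iota(e)}$ is meaningful.

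Next, for each $k\in\mathbb{N}$, pick a mean-normalized Hamiltonian $H_k$ whose path represents $\widetilde{\phi}^{k}$. Apply Lemma \ref{rho compare} to the class $e\in QH^{0}(M;\mathbb{C})\setminus\{0\}$ and the Hamiltonian $H_k$ to obtain
\begin{equation*}
\rho(\widetilde{\phi}^{k},\iota(e))=\rho(H_k,\iota(e))=\rho(H_k,e)=\rho(\widetilde{\phi}^{k},e).
\end{equation*}
Dividing by $k$ and passing to the limit $k\to+\infty$ yields
\begin{equation*}
\zeta_{\iota(e)}(\widetilde{\phi})=\lim_{k\to+\infty}\frac{\rho(\widetilde{\phi}^{k},\iota(e))}{k}=\lim_{k\to+\infty}\frac{\rho(\widetilde{\phi}^{k},e)}{k}=\zeta_{e}(\widetilde{\phi}),
\end{equation*}
which is the main identity. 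In particular the limit on the left is guaranteed to exist because the limit on the right does.

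For the final assertion, the hypothesis that $e\cdot QH^{even}(M;\mathbb{C})$ is a field places us in the setting of part (1) of Theorem \ref{EP quasimorphism}, so $\zeta_{e}:\widetilde{\Ham}(M,\omega)\to\mathbb{R}$ is a homogeneous quasimorphism. The pointwise equality $\zeta_{\iota(e)}=\zeta_{e}$ then automatically transfers this property to $\zeta_{\iota(e)}$. The only genuine content in the argument lies in Lemma \ref{rho compare}; once that is available, no further work is needed, and I do not foresee any obstacle beyond checking that the use of Lemma \ref{rho compare} is legitimate for every iterate $\widetilde{\phi}^{k}$ (which it is, since the lemma is stated uniformly in $H$).
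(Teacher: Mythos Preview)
Your argument is correct and is exactly what the paper intends: the paper states that Lemma \ref{rho compare 2} is ``a direct consequence of Lemma \ref{rho compare}'' without giving further details, and you have simply written out that direct consequence.
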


\begin{remark}
A priori Lemma \ref{rho compare 2} is not obvious as we do not know if $\iota(e)$ is a unit of a field factor of $QH^\ast(M;\Lambda)$ i.e. $\iota(e) \cdot QH^\ast(M;\Lambda)$ is a field (Theorem \ref{EP quasimorphism}). For example, $QH^\ast(\C P^2;\C)$ is a field but $QH^\ast(\C P^2;\Lambda)$ splits into a direct sum of three fields and the identity element $1 \in QH^\ast(\C P^2;\C)$ embeds to $1_\Lambda \in QH^\ast(\C P^2;\Lambda)$ which is not an unit of a field factor.
\end{remark}

\begin{proof}[Proof of Lemma \ref{rho compare}]
 Similar technical results appeared in literature, e.g. \cite[Section 5.4]{[BC09]}, \cite[Propositions 2.21, 6.6]{[UZ16]}. Nevertheless, we give a proof for the sake of clarity. 
 
 We start by briefly recalling the construction of the Fukaya--Oh--Ohta--Ono type Floer chain complex from Chapter 2 in \cite{[FOOO19]}. First of all, we introduce the downward universal Novikov field
 $$\Lambda^{\downarrow}:=\{ \sum_{j\geq 0} a_j T^{\lambda_j}:a_j \in \mathbb{C},\lim_{j\to +\infty} \lambda_j=-\infty\}$$
 and recall that an element $\wt{z}=[z,w]$ of $\widetilde{\mathscr{P}}(H)$ is a capped periodic orbit of $H$, i.e. a pair of a periodic orbit $z$ and its capping $w$, as we have defined in Section \ref{preliminaries}. For a non-degenerate Hamiltonian $H$, define
\begin{equation}
    \widehat{CF}_{\ast}(H;\Lambda^{\downarrow}):= \{ \sum_{j\geq 0} a_j \cdot \wt{z}_j T^{\lambda_j}:a_j \in \mathbb{C}, \wt{z}_j \in \wt{\mathscr{P}}(H), \lim_{j\to +\infty} \lambda_j=-\infty\} .
\end{equation}

 We define the Floer chain complex by the following quotient
$$CF_\ast(H;\Lambda^{\downarrow}):=\widehat{CF}_\ast(H;\Lambda^{\downarrow}) / \sim$$
where the equivalence relation is defined by
$$[z,w] \sim [z',w']\otimes T^{\tau} \Longleftrightarrow z=z',\ \omega(w)=\omega(w')+\tau.$$
We describe a natural chain map from the Floer chain complex $CF_\ast(H)$, which was defined in Section \ref{HF}, to the Fukaya--Oh--Ohta--Ono type Floer chain complex $CF_\ast(H;\Lambda^{\downarrow})$:
$$j: CF_\ast(H) \hookrightarrow CF_\ast(H;\Lambda^{\downarrow}).$$
Consider
\begin{equation}
    \widehat{CF}_\ast(H;\mathbb{C}):= \{\sum_{k \leq k_0 , k_0 \in \Z} a_k \cdot \wt{z}_k s^{k}:a_k \in \mathbb{C}, \wt{z}_k \in \wt{\mathscr{P}}(H)\} 
\end{equation}
where $s$ is the formal variable used to define the field of Laurent series $\C [[s^{-1},s]$ that appear in the definition of the quantum homology in Section \ref{Quantum (co)homology}. Then, $CF_\ast(H)$ satisfies
$$CF_\ast(H)=\widehat{CF}_\ast(H;\mathbb{C}) / \sim$$
where
$$[z,w] \sim [z',w']\otimes s^k \Longleftrightarrow z=z',\ \omega(w)=\omega(w')+k\cdot \lambda_0.$$
It is easy to see that the inclusion 
\begin{equation}\label{j}
\begin{gathered}
    j: \C [[s^{-1},s] \hookrightarrow \Lambda^{\downarrow} \\
    s \mapsto  T^{\lambda_0}
\end{gathered}
\end{equation}
induces the inclusion
\begin{equation}
\begin{gathered}
    \hat{j}:\widehat{CF}_\ast (H;\mathbb{C})  \hookrightarrow \widehat{CF}_\ast(H;\Lambda^{\downarrow}) \\
    [z,w] \otimes s \mapsto [z,w]\otimes T^{\lambda_0}
\end{gathered}
\end{equation}
which induces the following chain map (by abuse of notation):
\begin{equation}\label{j2}
j:CF_\ast(H) \to CF_\ast(H;\Lambda^{\downarrow}).
\end{equation}

Now, notice that $\Lambda^{\downarrow}$ is a $\C [[s^{-1},s]$-module and by using the inclusion \ref{j}, it has the following split of $\C [[s^{-1},s]$-modules:
\begin{equation}
    \Lambda^{\downarrow} =  j(\C [[s^{-1},s]) \oplus j(\C [[s^{-1},s])^{\perp}
\end{equation}
where 
\begin{equation}
    j(\C [[s^{-1},s])^{\perp}:= \{\sum_{j\geq 0} a_j T^{\lambda_j}:a_j \in \mathbb{C},\lim_{j\to +\infty} \lambda_j=-\infty , \lambda_j \notin \lambda_0 \cdot \Z \}.
\end{equation}

Similarly, by considering the chain map \ref{j2}, the Floer chain complex splits as follows:
\begin{equation}\label{split}
    CF_\ast(H;\Lambda^{\downarrow})=j(CF_\ast (H) ) \oplus j(CF_\ast (H) )^{\perp}
\end{equation}
where
\begin{equation}
\begin{gathered}
    j(CF_\ast (H) )^{\perp}:= \{\sum_{j\geq 0} a_j\wt{z}_j  T^{\lambda_j}:a_j  \in \mathbb{C},\wt{z}_j \in \wt{\mathscr{P}}(H),\\
    \lim_{j\to +\infty} \lambda_j=-\infty , \lambda_j \notin \lambda_0 \cdot \Z \} .
\end{gathered}
\end{equation}

Now, we start the proof of Lemma \ref{rho compare}. We first prove $\rho(H,\iota(a)) \leq \rho(H,a)$ for any $a \in QH^\ast(M;\C) $ and a Hamiltonian $H$. By the continuity property of spectral invariants (Proposition \ref{prop spec inv} (1)), it is enough to prove the case where $H$ is non-degenerate. Let $a\in QH^\ast(M;\mathbb{C})\backslash \{0\}$ and $H$ be a non-degenerate Hamiltonian. The inclusion $j$ induces the following map on homology:
$$j_\ast : HF_\ast(H) \to HF_\ast(H;\Lambda^{\downarrow}).$$

Note that
$$PSS_{H,\Lambda}\circ PD \circ \iota = j_\ast \circ PSS_H\circ PD$$
where $PSS_H$ on the right hand side denotes the PSS-isomorphism
$$PSS_H : QH_\ast (M;\mathbb{C}) \xrightarrow{\sim} HF_\ast (H)$$
while $PSS_H$ on the left hand side denotes the PSS-isomorphism
$$PSS_{H,\Lambda^{\downarrow}}  : QH_\ast(M;\Lambda^{\downarrow}) \xrightarrow{\sim} HF_\ast(H;\Lambda^{\downarrow})$$
and $PD$ denotes the Poincar\'e duality between quantum homology and quantum cohomology. Consider the diagram
\[
  \begin{CD}
     HF_\ast ^{\tau}(H) @>{i^\tau _\ast}>>  HF_\ast (H)  @<PSS_H \circ PD<<  QH^\ast(M;\mathbb{C})    \\
    @VV{j_\ast}V     @VV{j_\ast}V  @VV{\iota}V\\
     HF_\ast ^{\tau}(H;\Lambda^{\downarrow}) @>i^\tau _\ast>>  HF_\ast (H;\Lambda^{\downarrow}) @<PSS_{H,\Lambda^{\downarrow}}\circ PD<<  QH^\ast(M;\Lambda) 
  \end{CD}
\]

As $j_\ast$ preserves the action filtration, the diagram commutes and for tautological reasons, we get 
\begin{equation}\label{leq}
    \rho(H,\iota(a)) \leq \rho(H,a).
\end{equation}

We next show 
$$\rho(H,\iota(a)) \geq \rho(H,a)$$
for any $a \in QH^\ast(M;\C) $ and a Hamiltonian $H$. We prove this inequality for ``nice'' Hamiltonians where a ``nice'' Hamiltonian $H$ has the properties that it is non-degenerate and the action functional $\mathscr{A}_H$ induces a bijection between $\widetilde{\mathscr{P}} (H) $ and $\Spec (H)$. As one can approximate any Hamiltonian with a sequence of ``nice'' Hamiltonians, restricting our focus to this class of Hamiltonians is enough. We argue by contradiction: assume there is a ``nice'' Hamiltonian $H$ and a class $a \in QH^\ast (M ; \C)$ such that 
\begin{equation}\label{contradiction}
    \rho(H,\iota(a)) < \rho(H,a) .
\end{equation}
There exist Floer cycles $\alpha \in CF_\ast (H)$, $\alpha' \in CF_\ast (H; \Lambda^{\downarrow})$ such that
\begin{subequations}
\begin{gather}
    \rho(H,a)= \mathscr{A}_{H} (\alpha), \\
[\alpha]=PSS_{H} \circ PD (a)
\end{gather}
\end{subequations}
and
\begin{subequations}
\begin{gather}
    \rho(H,\iota(a))= \mathscr{A}_{H} (\alpha'),\\
[\alpha']=PSS_{H,\Lambda^{\downarrow}} \circ PD (\iota(a)).
\end{gather}
\end{subequations}

As $[j(\alpha)]=\iota([\alpha])=[\alpha']$, there exists $\beta \in CF_\ast (H;\Lambda^{\downarrow})$ such that
$$\alpha' = j(\alpha) + \partial (\beta).$$
First of all, as $\rho(H , a) \in \Spec (H)$ and $H$ is ``nice'', there exists the ``action carrier'' $\widetilde{z} \in \widetilde{\mathscr{P}} (H)$ such that 
\begin{subequations}
\begin{gather}
    \mathscr{A}_{H} (\widetilde{z})=\mathscr{A}_{H} (\alpha)\label{alpha 1},\\
        \alpha = \lambda \cdot \widetilde{z} + low \label{alpha 2}
\end{gather}
\end{subequations}

for some $\lambda \in \C \backslash \{0\}$ where $low$ denotes some chain which satisfies 
$$\mathscr{A}_{H} (low ) <\mathscr{A}_{H} (\widetilde{z})=\mathscr{A}_{H} (\alpha) . $$
We will repeatedly use this convenient notation in the sequel analogously. Next, as we have
$$\mathscr{A}_{H} (j(\alpha) + \partial (\beta))=\mathscr{A}_{H} (\alpha') < \mathscr{A}_{H} (j(\alpha))=\mathscr{A}_{H} (\alpha) $$
from the assumption \ref{contradiction}, equations \ref{alpha 1} and \ref{alpha 2} imply the following about $\partial (\beta)$:
\begin{equation}\label{del beta}
    \partial (\beta)= -\lambda \cdot j(\widetilde{z}) + low.
\end{equation}
Now, we decompose $\beta \in CF_\ast (H; \Lambda^{\downarrow})$ with respect to the split \ref{split}:
\begin{equation}
     \beta= j(\beta_1) + \beta_2
\end{equation}
where $\beta_1 \in CF_{\ast} (H), \beta_2 \in j(CF_{\ast} (H)) ^{\perp}.$ The Floer boundary map preserves the split \ref{split}, so $\partial(\beta)$ splits as follows:
\begin{equation}\label{del beta 2}
    \partial(\beta)= \partial(j(\beta_1)) + \partial(\beta_2).
\end{equation}
By comparing equations \ref{del beta} and \ref{del beta 2}, as $-\lambda \cdot j(\widetilde{z}) \in j(CF_{\ast}(H))$, we see that $-\lambda \cdot j(\widetilde{z})$ is contained in $\partial (j(\beta_1))$ and not in $\partial(\beta_2)$:
\begin{equation}
    j( \partial (\beta_1)) =\partial (j(\beta_1))= -\lambda \cdot j(\widetilde{z}) + low
\end{equation}
Consider the Floer cycle 
$$ \alpha + \partial (\beta_1) \in CF_{\ast} (H).$$
This satisfies 
\begin{subequations}
\begin{gather}
    [\alpha + \partial (\beta_1)]= [\alpha]=PSS_{H} \circ PD (a) , \label{class} \\
\mathscr{A}_{H} (\alpha + \partial (\beta_1)) < \mathscr{A}_{H} (\alpha)= \rho (H , a).\label{ineq}
\end{gather}
\end{subequations}
Note that inequality \ref{ineq} follows from cancellation of the action carriers $\widetilde{z}$ of $\alpha$ and $\partial (\beta_1)$. The relations \ref{class} and \ref{ineq} contradict the definition of $\rho(H,a)$. This completes the proof of 
\begin{equation}\label{geq}
    \rho(H,\iota(a)) \geq \rho(H,a).
\end{equation}
Inequalities \ref{leq} and \ref{geq} imply Lemma \ref{rho compare}.
\end{proof}

We obtain the following from Lemma \ref{rho compare 2}.

\begin{corol}\label{h to sh}
Let $(M,\omega)$ be a monotone symplectic manifold. Assume that $e\in QH^0(M;\mathbb{C})$ is an idempotent and $e\cdot QH^{even}(M;\mathbb{C})$ is a field. If a subset $S \subset M$ is $\iota(e)$-heavy, then $S$ is $e$-superheavy. 
\end{corol}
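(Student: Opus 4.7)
The plan is to chain two observations: first that $\iota(e)$-heaviness of $S$ and $e$-heaviness of $S$ are literally the same statement once Lemma \ref{rho compare 2} is invoked, and second that $e$-heaviness upgrades to $e$-superheaviness because $\zeta_e$ is a genuine homogeneous quasimorphism under the hypothesis on $e \cdot QH^{even}(M;\mathbb{C})$.

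More precisely, I would first observe that the hypothesis that $e \cdot QH^{even}(M;\mathbb{C})$ is a field places us in the setting of Theorem \ref{EP quasimorphism} (1), so that $\zeta_e \colon \widetilde{\Ham}(M,\omega) \to \mathbb{R}$ is a homogeneous quasimorphism. By the remark following the definition of heaviness/superheaviness, in this situation $e$-heaviness and $e$-superheaviness are equivalent. Consequently, it suffices to verify that $S$ is $e$-heavy, i.e. that $\inf_{x\in S} H(x) \leq \zeta_e(H)$ for every $H \in C^\infty(M,\mathbb{R})$.

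Next I would invoke Lemma \ref{rho compare 2}, which gives $\zeta_{\iota(e)}(\widetilde{\phi}) = \zeta_e(\widetilde{\phi})$ for all $\widetilde{\phi} \in \widetilde{\Ham}(M,\omega)$. Applying this to the homotopy class represented by the Hamiltonian flow of an autonomous $H \in C^\infty(M,\mathbb{R})$, the two symplectic quasi-states agree as functions on $C^\infty(M,\mathbb{R})$. The $\iota(e)$-heaviness assumption on $S$ then reads
\[
\inf_{x \in S} H(x) \;\leq\; \zeta_{\iota(e)}(H) \;=\; \zeta_e(H)
\]
for every $H \in C^\infty(M,\mathbb{R})$, so $S$ is $e$-heavy, hence $e$-superheavy.

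There is no real obstacle here: the entire content is packed into Lemma \ref{rho compare 2} (which in turn rests on Lemma \ref{rho compare}) and into the standard equivalence of heavy/superheavy in the quasimorphism regime. The only point that deserves care is making sure the hypothesis of Lemma \ref{rho compare 2} is indeed what we assumed, namely that $e \cdot QH^{even}(M;\mathbb{C})$ is a field, which is exactly the condition that both guarantees $\zeta_e$ is a quasimorphism and therefore unlocks the heavy/superheavy equivalence used in the final step.
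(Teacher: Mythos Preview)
Your proof is correct and follows essentially the same approach as the paper's: invoke Lemma \ref{rho compare 2} to convert $\iota(e)$-heaviness into $e$-heaviness, then use that $\zeta_e$ is a homogeneous quasimorphism (from the field hypothesis on $e\cdot QH^{even}(M;\mathbb{C})$) to upgrade $e$-heaviness to $e$-superheaviness.
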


\begin{proof}[Proof of Corollary \ref{h to sh}]

Lemma \ref{rho compare 2} implies that $S$ is $e$-heavy. However, as $e\in QH^0(M;\mathbb{C})$ is a unit of a field factor of $QH^{even}(M;\mathbb{C})$, $\zeta_e$ is a {\qmor} so $S$ is $e$-superheavy.
\end{proof}

\subsection{Proof of Theorem \ref{main theo}--Part 2}\label{part2}

In this subsection, we prove Theorem \ref{theo part2} which was used to complete the proof of Theorem \ref{main theo} in the end of Section \ref{part1}.

\begin{proof}[Proof of Theorem \ref{theo part2}]
We argue the cases $n=2$ and $n=4$ separately.

$\bullet$ \textit{Case $n=2$}: In this case, $\zeta_{e_+} \neq \zeta_{e_-}$ was already proven by Eliashberg--Polterovich in \cite{[EliP10]} by an approach different to what we discuss in this section. In this section, we will prove $\zeta_{e_+} \neq \zeta_{e_-}$ by using the following result of Fukaya--Oh--Ohta--Ono \cite{[FOOO12]}, \cite{[FOOO19]}. The same argument will be used in the case where $n=4$.

\begin{theo}\label{FOOO S2S2}$($\cite[Lemma 23.3(1), Lemma 23.5]{[FOOO12]}$)$
\begin{enumerate}
\item In $S^2\times S^2$, there exists a monotone Lagrangian submanifold $L_0$ diffeomorphic to $T^2$ such that
$$HF((L_0,b_0);\Lambda)\neq 0$$
for a certain bounding cochain $b_0 \in H^1(L_0;\Lambda_0)/H^1(L_0;2\pi i\mathbb{Z}).$

\item The anti-diagonal in $S^2\times S^2$ denoted by $L_1$ is unobstructed and satisfies 
$$HF(L_1;\Lambda)\neq 0.$$
\item $L_0$ and $L_1$ are disjoint:
$$L_0 \cap L_1=\emptyset.$$

\end{enumerate}
\end{theo}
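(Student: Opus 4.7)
The strategy treats the three assertions in turn; assertion (1) is the technically substantial part, while (2) is mostly formal and (3) reduces to a geometric check.

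For assertion (1), I would construct $L_0$ as a Chekanov-type monotone Lagrangian torus in $S^2 \times S^2$, which can be realized by a Lagrangian surgery (or by reduction of a product of circles in $\R^4$) localized inside a Darboux ball. One must tune the construction so that $L_0$ is monotone — this fixes its overall scale. Having fixed $L_0$, the Fukaya--Oh--Ohta--Ono canonical-model machinery gives an $A_\infty$-structure $\{\mathfrak{m}_k\}$ on $H^*(L_0;\Lambda_0)$, and weak bounding cochains are characterized as critical points of the potential function
$$W_{L_0}\colon H^1(L_0;\Lambda_+)/H^1(L_0;2\pi i\Z) \to \Lambda_+.$$
Computing $W_{L_0}$ amounts to enumerating Maslov-index-$2$ holomorphic discs bounded by $L_0$; by monotonicity only finitely many homotopy classes contribute, and after choosing coordinates on $H^1(L_0;\Lambda_+)/H^1(L_0;2\pi i\Z) \cong (\Lambda_0^{\times})^2$ one obtains a concrete Laurent polynomial. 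I would then exhibit a critical point $b_0$ of $W_{L_0}$ by direct algebra; by the FOOO criterion, $b_0$ is a weak bounding cochain and $HF((L_0,b_0);\Lambda) \neq 0$.

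For assertion (2), since $L_1 \cong S^2$ has $H^1(L_1) = 0$, the only candidate bounding cochain is $b = 0$. Weak unobstructedness reduces to verifying $\mathfrak{m}_0(1) \in \Lambda_0 \cdot \PD([L_1])$, which is automatic: outputs of $\mathfrak{m}_0$ coming from Maslov-$2$ discs land in degree $0$ on the sphere, i.e. in $\Lambda_0 \cdot \PD([L_1])$. To show $HF(L_1;\Lambda) \neq 0$ I would exploit the antisymplectic involution of $S^2 \times S^2$ whose fixed locus is $L_1$: holomorphic discs with boundary on $L_1$ pair with their complex conjugates, which forces pairwise sign cancellation in $\mathfrak{m}_1$, yielding $HF(L_1;\Lambda) \cong H^*(L_1;\Lambda) \neq 0$. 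Alternatively one can invoke Oh's computation of the Floer cohomology of monotone Lagrangian spheres of minimal Maslov index $2$ in the present dimension.

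For assertion (3), I would verify disjointness $L_0 \cap L_1 = \emptyset$ directly: since the Chekanov torus can be built inside an arbitrarily small Darboux ball, one simply chooses this ball to be disjoint from the anti-diagonal (which is a smooth $2$-sphere and hence has empty interior in $S^2 \times S^2$). The main obstacle is assertion (1): the combined requirements that $L_0$ be monotone, that its superpotential admit a critical point over $\Lambda_0$, and that the construction remain compatible with disjointness from $L_1$, force a careful enumeration of Maslov-$2$ discs with correct multiplicities. Typically one reduces this count via a toric or almost-toric degeneration of $S^2 \times S^2$, where the disc potential becomes explicit, and then transfers the computation back to the actual monotone $L_0$ using invariance of the potential under $A_\infty$-equivalences.
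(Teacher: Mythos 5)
This statement is quoted in the paper directly from \cite[Lemmas 23.3(1), 23.5]{[FOOO12]} with no proof supplied, so the benchmark is the argument in that reference. Your sketches of (1) and (2) are broadly in line with it: part (1) is indeed proved there by computing the potential function of an exotic monotone torus via a toric degeneration of $S^2\times S^2$ and exhibiting a critical point, and part (2) follows from the anti-symplectic involution fixing the anti-diagonal (or simply from degree reasons, since the anti-diagonal is a monotone Lagrangian sphere of minimal Maslov number $4$ --- not $2$, as you write --- so the Floer differential vanishes and $HF(L_1;\Lambda)\cong H^\ast(S^2;\Lambda)$). One caveat on (1): the relevant critical point for this torus has a nontrivial constant ($\mathbb{C}$-valued) part, which is why the theorem takes $b_0\in H^1(L_0;\Lambda_0)/H^1(L_0;2\pi i\mathbb{Z})$; your domain $H^1(L_0;\Lambda_+)/H^1(L_0;2\pi i\mathbb{Z})$ is both redundant (quotienting $\Lambda_+$ by $2\pi i\mathbb{Z}$ does nothing) and too small to contain the needed critical point, although your later identification with $(\Lambda_0^\times)^2$ is the correct setting.

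The genuine gap is in (3). You argue that the Chekanov torus ``can be built inside an arbitrarily small Darboux ball'' chosen disjoint from the anti-diagonal. This is false and contradicts both your own earlier observation and assertion (1): monotonicity pins down the scale of $L_0$ (the Maslov-$2$ disc classes must have the fixed area dictated by the ambient monotonicity constant), and, more decisively, a Lagrangian contained in a displaceable Darboux ball is itself displaceable, hence has vanishing Floer cohomology for \emph{every} bounding cochain --- so if your localization worked, it would destroy the conclusion of (1). The point of the theorem is precisely that $L_0$ is a ``large'', non-displaceable torus. In \cite{[FOOO12]} disjointness is instead read off from explicit defining equations: $L_0$ is (Hamiltonian isotopic to) $\{(x,y)\in S^2\times S^2 : x_1y_1+x_2y_2+x_3y_3=-1/2,\ x_3+y_3=0\}$, while on the anti-diagonal $\{y=-x\}$ one has $x_1y_1+x_2y_2+x_3y_3=-1\neq -1/2$, so $L_0\cap L_1=\emptyset$. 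Your proof of (3) needs to be replaced by such a direct check on a concrete model of $L_0$.
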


Now, consider the natural embedding
$$\iota: QH^\ast(Q^2;\mathbb{C}) \hookrightarrow QH^\ast(Q^2;\Lambda).$$
As the closed-open map maps the identity element of the quantum cohomology ring to the identity element of the Lagrangian Floer cohomology group, we have 
\begin{equation}
    \begin{gathered}
        \mathcal{CO}^0 _b(1) = \PD([L_0]) \neq 0 \in HF^\ast ((L_0, b) ; \Lambda) \\
  \mathcal{CO}^0 (1)=\PD([L_1]) \neq 0 \in HF^\ast (L_1;\Lambda).
    \end{gathered}
\end{equation}
Since
$$1=\iota(e_+)+\iota(e_-),$$
it is 
$$\mathcal{CO}^0 _b(\iota(e_+))\neq 0 \ \ or\ \ \mathcal{CO}^0 _b(\iota(e_-))\neq 0. $$
As $\iota(e_\pm)$ are both idempotents, by Theorem \ref{co map}, we deduce that $L_0$ is at least either $\iota(e_+)$-heavy or $\iota(e_-)$-heavy. Corollary \ref{h to sh} implies that $L_0$ is at least either $e_+$-superheavy or $e_-$-superheavy. Next, by looking at the second equation, the same argument implies that $L_1$ is at least either $e_+$-superheavy or $e_-$-superheavy. As $L_0$ and $L_1$ are disjoint, Proposition \ref{disjoint-heavy} implies that they cannot be both $e_+$-superheavy or both $e_-$-superheavy at once. This implies
$$\zeta_{e_+} \neq \zeta_{e_-}.$$

\begin{remark}
From this argument, it follows that either
\begin{itemize}
\item $L_0$ is $e_+$-superheavy and $L_1$ is $e_-$-superheavy 
\item $L_0$ is $e_-$-superheavy and $L_1$ is $e_+$-superheavy 
\end{itemize}
but it is not clear which one of the two actually holds. Eliashberg--Polterovich's approach shows that the former holds \cite{[EliP10]}. 
\end{remark}

$\bullet$\textit{Case $n=4$}: The key of the proof is to find two disjoint Lagrangian submanifolds in $Q^4$ having non-vanishing Floer cohomology just as in the previous case. We use results of Nishinou--Nohara--Ueda and Nohara--Ueda which we will now briefly explain.

The relation between the superpotential and Lagrangian Floer cohomology has been studied extensively. After a pioneering work of Cho \cite{[Cho04]}, Fukaya--Oh--Ohta--Ono computed the superpotential for toric symplectic manifolds in \cite{[FOOO10]}. Later, Nishinou--Nohara--Ueda computed the superpotential for symplectic manifolds admitting a toric degeneration in \cite{[NNU10]}. This lead Nohara--Ueda to study the Floer cohomology of non-torus fibers in partial flag manifolds in \cite{[NU16]}. We state some of their results which will be relevant for us.

\begin{theo}\label{ueda}$($\cite[Theorem 10.1, Section 11]{[NNU10]}, \cite[Theorem 1.2, Example 3.3]{[NU16]}$)$

Let $\Phi:Gr_{\mathbb{C}}(2,4) \to \mathbb{R}^4$ be the Gelfand--Cetlin system with the Gelfand--Cetlin polytope $\Delta:=\Phi(Gr_{\mathbb{C}}(2,4))$. Denote the fiber of $u \in \Delta$ by $L(u)$:
$$L(u):=\Phi^{-1}(u).$$
We identify $Gr_{\mathbb{C}}(2,4)$ with the adjoint orbit of 
$$\lambda=\diag (4,4,0,0)$$
so that it is monotone.

\begin{enumerate}
\item For
$$u_0:=( 2, 3,1,2) \in \Int(\Delta),$$
there exists a bounding cochain $b\in H^1(L(u_0);\Lambda)$ such that 
$$HF((L(u_0),b); \Lambda) \simeq QH^\ast (T^4 ;\Lambda).$$

\item There exists $u_1 \in \partial \Delta$ such that its fiber $L(u_1)$ is Lagrangian and diffeomorphic to $U(2)\simeq S^1\times S^3$ with non-trivial Floer cohomology:
$$HF((L(u_1),\pm \pi i/2\cdot e_1);\Lambda) \simeq QH^\ast (S^1\times S^3;\Lambda)$$
for a bounding cochain $b=\pm \pi i/2\cdot e_1$ where $e_1$ is the generator of $H^1(L(u_1);\mathbb{Z})$.

\end{enumerate}
\end{theo}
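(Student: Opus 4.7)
The plan is to prove both parts of Theorem \ref{ueda} via the Fukaya--Oh--Ohta--Ono superpotential formalism, in which non-vanishing of the Floer cohomology of a Lagrangian fiber $L(u)$ is detected by critical points of a Laurent-type potential function $\mathfrak{PO}_u$. For the monotone $Gr_{\mathbb{C}}(2,4)$ identified with the adjoint orbit of $\lambda = \diag(4,4,0,0)$, the Gelfand--Cetlin polytope $\Delta$ is four-dimensional with coordinates $(\lambda_1^{(1)},\lambda_1^{(2)},\lambda_2^{(2)},\lambda_2^{(3)})$ subject to the usual interlacing inequalities; interior fibers are Lagrangian tori $T^4$, while certain boundary strata of $\Delta$ carry non-toric Lagrangian fibers.

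The bedrock of both parts is the toric degeneration of $Gr_{\mathbb{C}}(2,4)$ constructed by Nishinou--Nohara--Ueda, whose central fiber is the toric variety with moment polytope $\Delta$. Through a disc-counting correspondence between the smooth generic fiber and the central toric fiber, this yields the leading order of $\mathfrak{PO}_u$ on each torus fiber $L(u)$ as a sum of one monomial $T^{d_i(u)} y^{v_i}$ per facet of $\Delta$, where $d_i(u)$ is the affine distance from $u$ to the $i$-th facet and $v_i$ its primitive integer normal.

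For part (1), I would solve the critical point equations $y_j \cdot \partial \mathfrak{PO}_u / \partial y_j = 0$ and observe that over $\Lambda$ these admit a solution precisely when $u$ is the balanced point at which the facet contributions cancel. A direct affine-distance computation confirms that $u_0=(2,3,1,2)$ is exactly this balanced point: all six facet distances at $u_0$ equal $1$. The existence of a non-degenerate critical point then lifts via Fukaya--Oh--Ohta--Ono's Maurer--Cartan formalism to a bounding cochain $b \in H^1(L(u_0);\Lambda_0)/H^1(L(u_0);2\pi i \Z)$, and non-degeneracy of the Hessian gives the ring isomorphism $HF((L(u_0),b);\Lambda) \simeq H^\ast(T^4;\Lambda) = QH^\ast(T^4;\Lambda)$.

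For part (2), I would follow Nohara--Ueda: there is a boundary face of $\Delta$ along which certain interlacing equalities collapse two circle directions of $T^4$ and turn the fiber into a $U(2) \simeq S^1 \times S^3$ orbit at an interior point $u_1$ of that face. The potential on $L(u_1)$ is obtained by restricting the toric computation to this collapsed fiber; to kill the $\mathfrak{m}_0$-obstruction one must tune the holonomy along the $S^1$-factor to a specific root-of-unity value, which is precisely the role of the bounding cochain $b = \pm(\pi i/2) e_1$. The main obstacle throughout is justifying the superpotential computation via the toric degeneration: one must establish a bijection between Maslov-index-two discs with boundary on the smooth Lagrangian and discs in the central toric fiber with matching symplectic areas, and rule out spurious bubbling at the boundary stratum. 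This disc-counting analysis is the technical core of Nishinou--Nohara--Ueda and Nohara--Ueda; once it is in place, the remainder reduces to explicit polynomial computations and the formal machinery of bounding cochains.
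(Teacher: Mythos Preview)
The paper does not contain its own proof of Theorem \ref{ueda}: as the attribution in the theorem statement makes explicit, this is a result quoted from \cite{[NNU10]} and \cite{[NU16]}, and the paper simply invokes it as a black box in the proof of Theorem \ref{theo part2} (case $n=4$). There is therefore nothing in the present paper to compare your proposal against.

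That said, your sketch is a fair high-level summary of the approach actually taken in the cited works: the toric degeneration of \cite{[NNU10]} is used to compute the superpotential on torus fibers, the interior point $u_0$ is identified as the unique point where the critical-point equations have a non-degenerate solution, and \cite{[NU16]} carries out the analogous analysis on the non-torus $U(2)$-fiber at a boundary face, where the holonomy parameter $b=\pm(\pi i/2)e_1$ arises from the restricted potential. You correctly flag the disc-counting correspondence through the degeneration as the technical heart of the argument. One small caution: in part (2) the obstruction and potential computation for the non-torus fiber in \cite{[NU16]} is not obtained merely by ``restricting the toric computation'' but requires a separate moduli-space analysis specific to the $U(2)$-fiber, so your phrasing slightly undersells the work involved there. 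Otherwise the outline matches the cited sources.
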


\begin{remark}
\begin{enumerate}
\item $Q^4$ is symplectomorphic to $Gr_{\mathbb{C}}(2,4)$. 

\item Theorem \ref{ueda} (1) was proven for any identification of $Gr_{\mathbb{C}}(2,4)$ with $\mathcal{O}_\lambda$ where
$$\lambda=\diag (2\alpha,2\alpha,0,0),$$
$$u_0:=( \alpha, 3\alpha/2,\alpha/2,\alpha) \in \Int(\Delta)$$
for any $\alpha>0$. If we choose $\alpha=2$, the Kirillov-Konstant form $\omega_\lambda$ defines a monotone symplectic form by the monotonicity criterion in Section \ref{GC system}.

\item Note that $L(u_0)\simeq  T^4$ and $L(u_1)\simeq U(2) \simeq S^1\times S^3$ are both monotone since they are both located in the center of a Lagrangian facet of the Gelfand--Cetlin polytope. This follows from a result of Yunhyung Cho and Yoosik Kim \cite{[CK19]} where they classify monotone fibers of Gelfand--Cetlin polytopes.
\end{enumerate}
\end{remark}

Let $L(u_0), L(u_1)$ be as in Theorem \ref{ueda}. We argue exactly as in the case where $n=2$. As the closed-open map maps the identity element of the quantum cohomology ring to the identity element of the Lagrangian Floer cohomology group, we have 
\begin{subequations}
\begin{gather}
    \mathcal{CO}^0 _{b}(1)=\PD([L(u_0)]) \neq 0 \in HF^\ast ((L(u_0), b);\Lambda), \\
 \mathcal{CO}^0 _{\pm \pi i/2\cdot e_1}(1)=\PD([L(u_1)]) \neq 0 \in HF^\ast ((L(u_1), \pm \pi i/2\cdot e_1);\Lambda).
\end{gather}
\end{subequations}

Since
$$1=\iota(e_+)+\iota(e_-),$$
the first equation and Theorem \ref{co map} imply that $L(u_0)$ is $e_+$-superheavy or $e_-$-superheavy. We have used that by Corollary \ref{h to sh}, $\iota(e_\pm)$-heaviness is equivalent to $e_\pm$-superheaviness. Next, by looking at the second equation, the same argument implies that $L(u_1)$ is $e_+$-superheavy or $e_-$-superheavy. As $L(u_0)$ and $L(u_1)$ are disjoint (recall that they are fibers of distinct points in the Gelfand--Cetlin polytope), we conclude that they cannot be both $e_+$-superheavy or both $e_-$-superheavy at once. This implies
$$\zeta_{e_+} \neq \zeta_{e_-}.$$
\end{proof}

\subsection{Generalization of Theorem \ref{main theo}}\label{Proof of main theo general}

In this section, we prove the following slight generalization of Theorem \ref{main theo}.

\begin{theo}\label{main theo general}
Let $(M,\omega)$ be a monotone symplectic manifold (with the same monotonicity constant as $Q^n,\ n=2,4)$ such that $QH^\ast(M;\mathbb{C})$ is semi-simple. Assume that there exists a Lagrangian submanifold $L$ of $(M,\omega)$ such that $HF((L,b);\Lambda)\neq 0$ for some bounding cochain $b$. Then, there exists a non-trivial homogeneous quasimorphism
$$\mu: \Ham( Q^n\times M) \to \mathbb{R}$$
which is both $C^0$-continuous and Hofer Lipschitz continuous where $Q^n\times M\ (n=2,4)$ denotes the monotone product.
\end{theo}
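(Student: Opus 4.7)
The plan is to follow the two-part strategy used for Theorem \ref{main theo}. Since $Q^n$ and $M$ are monotone with matching monotonicity constants, the monotone product $Q^n\times M$ is again monotone with the same constant, so all three quantum cohomologies are algebras over the same $\mathbb{C}[t^{-1},t]]$. The K\"unneth formula for quantum cohomology gives an isomorphism of graded $\mathbb{C}[t^{-1},t]]$-algebras
$$QH^\ast(Q^n\times M;\mathbb{C}) \cong QH^\ast(Q^n;\mathbb{C}) \otimes_{\mathbb{C}[t^{-1},t]]} QH^\ast(M;\mathbb{C}).$$
Both factors are semi-simple (Proposition \ref{QH(Q^n)} and the hypothesis on $M$), and in characteristic zero the tensor product of semi-simple commutative algebras over a field is semi-simple, so $QH^\ast(Q^n\times M;\mathbb{C})$ splits into a finite direct sum of fields. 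Write $1=\sum_k g_k$ for the corresponding decomposition of the identity into idempotents.

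By Theorem \ref{general}, for any two indices $k\neq k'$ the difference
$$\mu := \zeta_{g_k}-\zeta_{g_{k'}}$$
defines a homogeneous quasimorphism on $\Ham(Q^n\times M)$ which is $C^0$-continuous and Hofer Lipschitz continuous. The whole point is to choose $k,k'$ so that $\mu\not\equiv 0$, and for this I would exhibit two disjoint Lagrangians in $Q^n\times M$ that are superheavy with respect to different idempotents.

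Let $L_0,L_1\subset Q^n$ be the two disjoint Lagrangians with bounding cochains $b_0,b_1$ and non-vanishing deformed Floer cohomology over $\Lambda$ supplied by Theorem \ref{FOOO S2S2} (for $n=2$) or Theorem \ref{ueda} (for $n=4$), and let $b_L$ be a bounding cochain on $L\subset M$ with $HF((L,b_L);\Lambda)\neq 0$. Consider the product Lagrangians $\Lambda_i:=L_i\times L\subset Q^n\times M$, which are disjoint since $L_0\cap L_1=\emptyset$, and equip them with the product bounding cochains $b_i\times b_L$. The K\"unneth theorem for Lagrangian Floer cohomology deformed by bounding cochains gives
$$HF((\Lambda_i, b_i\times b_L);\Lambda) \cong HF((L_i,b_i);\Lambda)\otimes HF((L,b_L);\Lambda) \neq 0$$
for $i=0,1$. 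Now, as in the proof of Theorem \ref{theo part2}, the closed-open map sends $1\in QH^\ast(Q^n\times M;\Lambda)$ to $\PD([\Lambda_i])\neq 0$ in $HF^\ast((\Lambda_i,b_i\times b_L);\Lambda)$. Decomposing $1=\sum_k \iota(g_k)$ and using Theorem \ref{co map} together with Corollary \ref{h to sh} (whose hypothesis is guaranteed because each $g_k$ is a unit of a field summand of $QH^\ast(Q^n\times M;\mathbb{C})$), one obtains indices $k_0,k_1$ such that $\Lambda_0$ is $g_{k_0}$-superheavy and $\Lambda_1$ is $g_{k_1}$-superheavy. If $\zeta_{g_{k_0}}=\zeta_{g_{k_1}}$, the two disjoint Lagrangians would be simultaneously superheavy for the same quasi-state, violating Proposition \ref{disjoint-heavy}; therefore $\mu=\zeta_{g_{k_0}}-\zeta_{g_{k_1}}$ is non-trivial.

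The main obstacle is justifying the two K\"unneth statements in a way that interacts cleanly with bounding cochains and with the embedding $\iota\co QH^\ast(-;\mathbb{C})\hookrightarrow QH^\ast(-;\Lambda)$. The quantum K\"unneth is standard once the monotonicity constants are aligned, but the Lagrangian version with weak bounding cochains requires invoking the product $A_\infty$-structure and checking that $(b_i,b_L)$ remains a weak bounding cochain for the product; I would appeal to the existing literature on products in Fukaya categories. A secondary technical point is to verify that each field factor $g_k\cdot QH^\ast(Q^n\times M;\mathbb{C})$ is really a field (not just semi-simple) so that Theorem \ref{general} and Corollary \ref{h to sh} both apply, but this is built into the refined idempotent decomposition chosen above.
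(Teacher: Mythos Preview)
Your overall strategy matches the paper's: show that $QH^\ast(Q^n\times M;\mathbb{C})$ is semi-simple, apply Theorem \ref{general}, and distinguish two field idempotents by exhibiting the disjoint product Lagrangians $L_i\times L$ as superheavy for different quasi-states. The route diverges exactly at the step you flag as the main obstacle. You propose to establish superheaviness of $L_i\times L$ by first proving $HF((L_i\times L,\,b_i\times b_L);\Lambda)\neq 0$ via a K\"unneth theorem for Lagrangian Floer cohomology with weak bounding cochains, and then running $\mathcal{CO}^0$ directly on the product. The paper avoids Lagrangian K\"unneth entirely: it first runs $\mathcal{CO}^0$ in each factor separately to get $L_0,L_1$ superheavy for $e_\pm\in QH^\ast(Q^n;\mathbb{C})$ and $L$ superheavy for some $e_1\in QH^\ast(M;\mathbb{C})$, and then invokes \cite[Theorem 1.7]{[EP09]}, which says that a product of superheavy sets is superheavy for the tensor idempotent $e_\pm\otimes e_1$. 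Since $e_\pm\otimes e_1$ need not be a unit of a field factor of the product, it then uses \cite[Theorem 1.5(3)]{[EP09]} to refine to field idempotents $e'_{j_0},e'_{j_1}$. The trade-off: your argument is conceptually more direct, but the K\"unneth statement for deformed Lagrangian Floer cohomology you need is not readily citable in this generality (note $L$ is not assumed monotone), so you would have to develop it; the paper's route stays entirely inside the Hamiltonian spectral-invariant framework of Entov--Polterovich, where the product behaviour of superheaviness is already established and no relative Floer theory on $Q^n\times M$ is required.
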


\begin{remark}
 The existence of a homogeneous quasimorphism on $\Ham(Q^n\times M),\ n=2,4$ (instead of on the universal cover) where $(M,\omega)$ is as in Theorem \ref{main theo general} was not known to the best of our knowledge. Note that examples of $(M,\omega)$ which satisfy the assumptions in Theorem \ref{main theo general} include $\mathbb{C}P^n$, 1, 2 and 3 point monotone blow-ups of $\mathbb{C}P^2$, $Q^n$ and their monotone products.

\end{remark}

We start with some preliminary results on the product of semi-simple algebras.

Let $(M_j,\omega_j)\ (j=1,2)$ be monotone symplectic manifolds. Denote the generators of $\pi_2(M_j)/ \Ker(\omega_j)$ by $s_j$ which satisfy
$$\omega_j(s_j)=\lambda_{M_j},\ \ c_1(TM_j)(s_j)=N_{M_j}$$
where $\lambda_{M_j}$ denotes the rationality constant and $N_{M_j}$ denotes the minimal Chern number of $(M_j,\omega_j)$.

In the case where the monotonicity constants of $(M_j,\omega_j)\ (j=1,2)$ coincide, one can consider their product $(M_1\times M_2,\omega_1 \oplus \omega_2)$ which is also a monotone symplectic manifold. It has the same monotonicity constant as $(M_j,\omega_j)\ (j=1,2)$ and its minimal Chern number $N_{M_1\times M_2}$ is the greatest common divisor of $N_{M_1}$ and $N_{M_2}$. As above, we denote the generator of the $\pi_2(M_1\times M_2)/ \Ker(\omega_1 \oplus \omega_2)$ by $s$ which satisfies
$$(\omega_1 \oplus \omega_2)(s)=\lambda_{M_1\times M_2},\ \ c_1(T(M_1\times M_2))(s)=N_{M_1\times M_2}.$$

Entov--Polterovich proved the following in \cite{[EP08]}.

\begin{theo}$($\cite[Theorem 5.1, Theorem 6.1]{[EP08]}$)$

Let $(M_j,\omega_j)\ (j=1,2)$ be monotone symplectic manifolds. Assume that their quantum homology rings 
$$QH_{even} (M_j;\mathbb{C})=H_{even}(M_j;\mathbb{C})\otimes \mathbb{C}[|s_j ^{-1},s_j]$$
are both semi-simple and that at least one of $M_j,\ j=1,2$ satisfies $H_{2k-1}(M_j;\mathbb{C})=0$ for all $k \in \mathbb{Z}$. Then, 
$$QH_{even} (M_1\times M_2;\mathbb{C})=H_{even}(M_1 \times M_2;\mathbb{C} ) \otimes \mathbb{C}[|s ^{-1},s]$$ is semi-simple.
\end{theo}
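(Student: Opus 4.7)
The plan is to deduce semi-simplicity of $QH_{even}(M_1\times M_2;\mathbb{C})$ from a quantum Künneth-type isomorphism that reduces the product ring to the tensor product of the factor rings, combined with the fact that in characteristic zero a tensor product of semi-simple commutative algebras remains semi-simple. First, I would aim to establish a ring isomorphism of the form
$$QH_{even}(M_1\times M_2;\mathbb{C}) \cong QH_{even}(M_1;\mathbb{C}) \otimes_{\mathbb{C}} QH_{even}(M_2;\mathbb{C})$$
after a suitable base-change of coefficient rings, using that a $J_1\oplus J_2$-holomorphic sphere in $M_1\times M_2$ splits as the product of its projections and that $\pi_2(M_1\times M_2)/\ker \cong \pi_2(M_1)/\ker \,\oplus\, \pi_2(M_2)/\ker$ under the assumption that the monotonicity constants agree.

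The hypothesis $H_{2k-1}(M_j;\mathbb{C})=0$ for one factor is exactly what is needed to kill the classical Künneth obstruction: it guarantees that $H_{even}(M_1\times M_2;\mathbb{C})$ coincides with $H_{even}(M_1;\mathbb{C})\otimes H_{even}(M_2;\mathbb{C})$ (no contribution from odd-odd pairings), and ensures that the 3-point Gromov--Witten invariants on $M_1\times M_2$ evaluated on even-degree classes split cleanly as products of invariants on the factors. For the coefficient rings, since $\lambda_{M_1}/N_{M_1}=\lambda_{M_2}/N_{M_2}=\lambda/N_{M_1\times M_2}$, the generators are related by $s_j = s^{N_{M_j}/N_{M_1\times M_2}}$, so $\mathbb{C}[s_1^{-1},s_1]$ and $\mathbb{C}[s_2^{-1},s_2]$ both embed as subrings of $\mathbb{C}[s^{-1},s]$, and the tensor product above should be taken over $\mathbb{C}$ and then localized/completed to the product's coefficient ring.

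To conclude, write $QH_{even}(M_j;\mathbb{C})=\bigoplus_{i} Q^{(j)}_i$ as a direct sum of fields (each a finite extension of the Laurent series field). After base change to the product's larger coefficient ring, each factor $Q^{(j)}_i$ remains a finite direct sum of fields (because $\mathbb{C}$ is algebraically closed and the base change amounts to adjoining a root of the variable). Then $QH_{even}(M_1\times M_2;\mathbb{C})$ becomes a direct sum of tensor products $Q^{(1)}_i\otimes Q^{(2)}_{i'}$ of fields over a common base field of characteristic zero; since any such extension is separable, each such tensor product is itself a finite product of fields, and the total ring is semi-simple.

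The main obstacle will be rigorously establishing the quantum Künneth formula with the correct coefficient ring gymnastics: one must verify that no mixed Gromov--Witten invariants coupling the two factors through odd-degree classes can contaminate the even part of the quantum product, and that the compatibility of gradings and Novikov variables is preserved under the isomorphism. Once the Künneth isomorphism is in place the semi-simplicity conclusion is a purely algebraic exercise, so essentially all the geometric content is concentrated in this single step.
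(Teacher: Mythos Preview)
The paper does not give its own proof of this theorem: it is quoted verbatim as a result of Entov--Polterovich \cite[Theorems 5.1 and 6.1]{[EP08]} and then used as a black box in the proof of Theorem~\ref{main theo general}. There is therefore nothing in the present paper to compare your argument against.

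That said, your outline is essentially the strategy of the original reference: a quantum K\"unneth isomorphism (the Gromov--Witten invariants of a product split because a $J_1\oplus J_2$-holomorphic sphere in $M_1\times M_2$ projects to holomorphic spheres in each factor), followed by the purely algebraic fact that a tensor product of finite-dimensional semi-simple commutative algebras over a field of characteristic zero is again semi-simple. Your identification of the role of the hypothesis $H_{odd}(M_j;\mathbb{C})=0$ is also correct: it ensures $H_{even}(M_1\times M_2)\cong H_{even}(M_1)\otimes H_{even}(M_2)$ with no odd--odd contamination. The one place where your sketch is somewhat loose is the ``coefficient ring gymnastics'': the passage from $\mathbb{C}[[s_j^{-1},s_j]$ to $\mathbb{C}[[s^{-1},s]$ via $s_j\mapsto s^{N_{M_j}/N_{M_1\times M_2}}$ is a genuine base-change that can in principle alter the decomposition into fields, and one must check that semi-simplicity survives; in \cite{[EP08]} this is handled by their Theorem~5.1, which shows that for monotone manifolds the semi-simplicity over the Laurent field is equivalent to semi-simplicity in a more flexible algebraic setup that is stable under such base-changes. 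If you intend to write a self-contained proof you should make this step explicit rather than leave it as ``localized/completed''.
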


One can consider the following embedding:
$$\sigma : QH_\ast (M_1;\mathbb{C}) \hookrightarrow QH_\ast (M_1 \times M_2; \mathbb{C})$$
$$a\cdot s_1 \mapsto a\otimes [M_2] \cdot s^{ {N_{M_1}}/ N_{M_1\times M_2} }.$$
Of course, one can consider an analogous embedding for $M_2$.

We are now ready to prove Theorem \ref{main theo general}. We will use the cohomological counterpart of the results above.

\begin{proof}[Proof of Theorem \ref{main theo general}]

As $QH^\ast (M;\mathbb{C})$ is semi-simple, it splits into a direct sum of fields $\{Q_j\}$:
$$QH^\ast (M;\mathbb{C})=Q_1 \oplus Q_2 \oplus \cdots \oplus Q_l.$$
We decompose the identity element $1_M \in QH^\ast (M;\mathbb{C})$ with respect to this decomposition:
$$1_M=e_1+e_2+\cdots +e_l$$
where $e_j $ is a unit of $Q_j$. As we argued in the proof of Theorem \ref{main theo}, as
$$\mathcal{CO}_b ^0(1_M)=\mathcal{CO}_b ^0(\iota(e_1))+\mathcal{CO}_b ^0(\iota(e_2))+\cdots +\mathcal{CO}_b ^0(\iota(e_l)) \neq 0,$$
Proposition \ref{co map} implies that $L$ is $\iota(e_j)$-heavy for some $j\in \{1,2,\cdots, l\}$. Without loss of generality, we assume $j=1$. Moreover, Corollary \ref{h to sh} implies that $L$ is actually $e_1$-superheavy.

Recall that in the proof of Theorem \ref{main theo}, we have seen that in $Q^n,\ n=2,4$, there exist two disjoint Lagrangian submanifolds $L_0$ and $L_1$ which satisfy either one of the following:
\begin{enumerate}
\item $L_0$ is $e_+$-superheavy and $L_1$ is $e_-$-superheavy.
\item $L_0$ is $e_-$-superheavy and $L_1$ is $e_+$-superheavy.
\end{enumerate}
Without loss of generality, we assume the former. By \cite[Theorem 1.7]{[EP09]}, $L_0\times L$ is $e_+ \otimes e_1$-superheavy and $L_1 \times L$ is $e_- \otimes e_1$-superheavy. 

Now, as $QH^\ast (Q^n\times M;\mathbb{C})$ is also semi-simple, we consider its decomposition into fields and the decomposition of the identity element $1_{Q^n\times M} \in QH^\ast (Q^n\times M;\mathbb{C})$ with respect to this split:
$$QH^\ast (Q^n\times M;\mathbb{C})=Q'_1 \oplus Q'_2 \oplus \cdots \oplus Q'_{l'}$$
$$1_{Q^n\times M}=e'_1+e'_2+\cdots +e'_{l'}$$
for some $l' \in \mathbb{N}$. As $e_+ \otimes e_1$ and $e_- \otimes e_1$ are idempotents of $QH^\ast (Q^n\times M;\mathbb{C})$, by \cite[Theorem 1.5 (3)]{[EP09]}, there exist $j_0,j_1 \in \{1,2,\cdots , l'\}$ such that $L_0\times L$ is $e'_{j_0}$-heavy and $L_1 \times L$ is $e'_{j_1}$-heavy. For $\zeta_{e'_{j_0}}, \zeta_{e'_{j_1}}: \widetilde{\Ham}(Q^n\times M) \to \mathbb{R}$ both being homogeneous quasimorphisms, heaviness and superheaviness are equivalent for $e'_{j_0}$ and $e'_{j_1}$, thus $L_0\times L$ is $e'_{j_0}$-superheavy and $L_1 \times L$ is $e'_{j_1}$-superheavy. As $L_0\times L$ and $L_1\times L$ are disjoint, $L_1\times L$ is not $e'_{j_0}$-superheavy. This implies 
$$\zeta_{e'_{j_0}} \neq \zeta_{e'_{j_1}}.$$
Thus, it follows from Theorem \ref{general} that 
$$\mu:=\zeta_{e'_{j_0}}-\zeta_{e'_{j_1}}$$
defines a non-trivial homogeneous quasimorphism 
$$\mu: \Ham( Q^n\times M) \to \mathbb{R}$$
 which is both $C^0$-continuous and Hofer Lipschitz continuous.

\end{proof}

\subsection{Proof of Theorem \ref{Q of PW}}\label{proof of Q of PW}

In this subsection, we precisely state the question of Polterovich--Wu which appeared in Section \ref{A question of Polterovich--Wu} and prove Theorem \ref{Q of PW} as an application of Lemma \ref{rho compare 2}.

According to a computation due to Wu \cite{[Wu15]}, $QH^\ast(\mathbb{C}P^2;\Lambda)$ is semi-simple and splits into a direct sum of three fields:
$$QH^\ast(\mathbb{C}P^2;\Lambda)=Q_1\oplus Q_2 \oplus Q_3.$$
We denote the corresponding split of the identity element $1_{\Lambda} \in QH^\ast(\mathbb{C}P^2;\Lambda)$ as follows:
$$1_{\Lambda} =e_1 +e_2+e_3$$
where $\{e_j\}_{j=1,2,3}$ are 
$$e_j:=\frac{1}{3} (1_{\Lambda} + \theta^{j} u T^{-\frac{1}{3}\lambda_0} +  \theta^{2j} u^2 T^{-\frac{2}{3}\lambda_0}),$$
$u$ is the generator of $H^2(\mathbb{C}P^2;\mathbb{C})$ and 
$$\lambda_0:=\langle \omega_{FS}, [\mathbb{C}P^1] \rangle,\ \theta:=e^{\frac{2\pi i}{3}}.$$
Note that $u$ satisfies
$$u^3=T^{\lambda_0}.$$ 
These idempotents give rise to three homogenous quasimorphisms (or symplectic quasi-states) $\{\zeta_{e_j}\}_{j=1,2,3}$:

$$\zeta_{e_j}: \widetilde{\Ham}(\mathbb{C}P^2) \to \mathbb{R}$$
$$\zeta_{e_j}(\widetilde{\phi}):= \lim_{k \to +\infty} \frac{\rho(\widetilde{\phi} ^k,e_j)}{k}$$
for each $j=1,2,3$.

\begin{remark}
It will not be used in this paper but we point out that $\zeta_{e_j}$ descends to $\Ham(\mathbb{C}P^2)$ as $\pi_1(\Ham(\mathbb{C}P^2))=\mathbb{Z}_3$.
\end{remark}

Polterovich posed the following question:

\begin{question}\label{precise Q of PW}$($\cite[Remark 5.2]{[Wu15]}$)$

Is it possible to distinguish the symplectic quasi-states/morphisms for the three idempotents of $QH^\ast(\mathbb{C}P^2;\Lambda)$?
\end{question}

Note that $\zeta_j$ which appeared in the statement of this question in Section \ref{A question of Polterovich--Wu} is precisely $\zeta_{e_j}$ defined above. We now prove Theorem \ref{Q of PW} which answers this question in the negative.

\begin{proof}[Proof of Theorem \ref{Q of PW}]

We will show that 
$$\zeta_{e_j}= \zeta_{1_{\Lambda}}$$
for all $j=1,2,3$ where $1_{\Lambda} \in QH^\ast(\mathbb{C}P^2;\Lambda)$. By the triangle inequality, we have
$$\rho(\widetilde{\phi}^k,e_j) \leq \rho(\widetilde{\phi}^k,1_{\Lambda}) + \nu(e_j)$$
for any $k \in \mathbb{N}$. Thus,
\begin{equation}\label{ineq PW}
    \zeta_{e_j} \leq \zeta_{1_{\Lambda}}
\end{equation}
where 
$$\zeta_{1_{\Lambda}}(\widetilde{\phi}):=\lim_{k \to +\infty} \frac{\rho(\widetilde{\phi}^k,1_{\Lambda})}{k}$$
for $\widetilde{\phi} \in  \widetilde{\Ham}(\mathbb{C}P^2)$. As $QH^\ast (\mathbb{C}P^2;\mathbb{C})$ is a field, by Lemma \ref{rho compare 2}, we see that
\begin{equation}\label{PW1}
    \zeta_{1_{\Lambda}}=\zeta_{1}
\end{equation}
are both homogeneous quasimorphisms where $1 \in QH^\ast (\mathbb{C}P^2;\mathbb{C})$. Thus, the inequality \ref{ineq PW} and the homogeneity of $\zeta_{e_j}$ and $\zeta_{1_{\Lambda}}$ imply
\begin{equation}\label{PW2}
    \zeta_{e_j} = \zeta_{1_{\Lambda}}.
\end{equation}
Thus, putting equalities \ref{PW1} and \ref{PW2} together, we have proven
$$\zeta_{e_1}=\zeta_{e_2}=\zeta_{e_3}=\zeta_{1_{\Lambda}}=\zeta_{1}.$$
\end{proof}

\begin{remark}
A similar argument is applicable to the case where $M:=S^2 \times S^2$: As we have seen in Section \ref{QH of quadrics}, $QH^\ast(S^2\times S^2;\mathbb{C})$ splits into a direct sum of two fields. On the other hand, Fukaya--Oh--Ohta--Ono have computed in the proof of \cite[Theorem 23.4]{[FOOO19]}, $QH^\ast(S^2\times S^2;\Lambda)$ splits into a direct sum of four fields. Denote the two units of field factors of $QH^\ast(S^2\times S^2;\mathbb{C})$ by $e_\pm$ which satisfy
$$\PD(e_\pm)=\frac{[M]\pm [pt \times pt]s}{2}.$$
Denote the four units of the field factors of $QH^\ast(S^2\times S^2;\Lambda)$ by $e_{(\pm,\pm)}$ which satisfy
$$\PD(e_{(+,\pm)})=\frac{[M] + P\cdot T^{\lambda_0}}{4} \pm \frac{(A+B)T^{\lambda_0/2}}{4},$$
$$\PD(e_{(-,\pm)})=\frac{[M] - P\cdot T^{\lambda_0}}{4} \pm \frac{(A-B)T^{\lambda_0/2}}{4}$$
where
$$[M]:=[S^2\times S^2],\ \ P:=[pt\times pt],$$
$$A:=[S^2\times pt],\ \ B:=[pt\times S^2].$$
By using
$$\iota(e_+)= e_{(+,+)} + e_{(+, -)},$$
$$\iota(e_-)= e_{(-,+)} + e_{(-, -)},$$
we obtain 
$$\zeta_{e_+}=\zeta_{\iota(e_+)}=\zeta_{e_{(+,+)}}=\zeta_{e_{(+, -)}},$$
$$ \zeta_{e_-}=\zeta_{\iota(e_-)}=\zeta_{e_{(-,+)}}=\zeta_{e_{(-, -)}}.$$

\end{remark}

\subsection{Results on Lagrangian intersections}\label{lag int}

In this section, we discuss consequences of the proof of Theorem \ref{main theo} for Lagrangian intersections.

In proving Theorem \ref{main theo}, detecting disjoint Lagrangian submanifolds whose Floer cohomology is non-trivial is a crucial step which we discussed in Section \ref{part1}. As a by-product, we obtain certain results on Lagrangian intersections. 

A closed Lagrangian submanifold $L$ is called monotone if it satisfies
$$\omega|_{\pi_2(M,L)}=\lambda \cdot \mu|_{\pi_2(M,L)}$$
for some $\lambda>0$ where $\mu=\mu_L$ denotes the Maslov class. The minimal Maslov number $N_L$ is the positive generator of $ \mu(\pi_2(M,L))$ i.e. $ \mu(\pi_2(M,L))=N_L \mathbb{Z}.$ Recall that $\Lambda$ denotes the universal Novikov field 
$$\Lambda=\{\sum_{j=1} ^{\infty} a_j T^{\lambda_j} :a_j \in \mathbb{C}, \lambda _j  \in \mathbb{R},\lim_{j\to +\infty} \lambda_j =+\infty \}.$$

All the Lagrangian submanifolds concerned in the following are assumed to be oriented and relatively spin (for its definition, see Section \ref{deformed HF}). The statements in this section include the notion of deformed Floer cohomology defined by Fukaya--Oh--Ohta--Ono \cite{[FOOO09]}. For a quick review, see Section \ref{deformed HF}.

The main statement for Lagrangian intersection is the following.

\begin{theo}\label{Q4 intersection}
In $Q^n\ (n=2,4)$, there exist two monotone Lagrangian submanifolds $L_0,\ L_1$ that satisfy the following:

\begin{enumerate}
\item $L_0$ and $L_1$ are respectively diffeomorphic to
\begin{itemize}
\item $T^2$ and $S^2$ when $n=2$. 
\item $T^4$ and $S^1\times S^3$ when $n=4$. 
\end{itemize} 
\item $L_0$ and $L_1$ are disjoint.
\item Let $L$ be a Lagrangian submanifold in $Q^n$ which is 
\begin{itemize}
\item oriented when $n=2$. 
\item oriented and relatively spin when $n=4$. 
\end{itemize}
If $L$ is disjoint from both $L_0$ and $L_1$ i.e. if 
$$L \cap (L_0 \cup L_1) =\emptyset $$
then
$$HF((L,b);\Lambda)=0$$
for any bounding cochain $b$.
\end{enumerate}
\end{theo}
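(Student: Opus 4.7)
The plan is to take $L_0$ and $L_1$ to be the very Lagrangians that already appeared in the proof of Theorem~\ref{theo part2}. For $n=2$ I would choose $L_0$ to be the monotone torus and $L_1$ the anti-diagonal supplied by Theorem~\ref{FOOO S2S2}; for $n=4$ I would choose $L_0=L(u_0)\simeq T^4$ and $L_1=L(u_1)\simeq U(2)\simeq S^1\times S^3$ supplied by Theorem~\ref{ueda}. Items (1) and (2) then follow directly from the cited results: the diffeomorphism types are listed there, and disjointness is either stated explicitly (for $n=2$) or follows from $u_0\neq u_1$ in the Gelfand--Cetlin polytope, since Gelfand--Cetlin fibers over distinct base points are disjoint (for $n=4$).

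For item (3) my strategy is a clean superheaviness-intersection argument. Suppose for contradiction that $L\subset Q^n$ is an oriented (and, when $n=4$, relatively spin) Lagrangian with $L\cap(L_0\cup L_1)=\emptyset$ yet $HF((L,b);\Lambda)\neq 0$ for some bounding cochain $b$. The closed--open map $\mathcal{CO}^0_b$ sends the unit $1\in QH^\ast(Q^n;\Lambda)$ to $\PD([L])\neq 0$. Writing $1=\iota(e_+)+\iota(e_-)$, at least one of $\mathcal{CO}^0_b(\iota(e_\pm))$ is nonzero, so by Theorem~\ref{co map} the Lagrangian $L$ is $\iota(e_+)$-heavy or $\iota(e_-)$-heavy. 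By Corollary~\ref{h to sh} this upgrades to $e_+$-superheaviness or $e_-$-superheaviness of $L$.

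Now, the proof of Theorem~\ref{theo part2} established that each of $L_0,L_1$ is either $e_+$-superheavy or $e_-$-superheavy, and since they are disjoint, Proposition~\ref{disjoint-heavy} forces one of them to be $e_+$-superheavy and the other to be $e_-$-superheavy. Whichever sign $\varepsilon\in\{+,-\}$ is realized by $L$, there is therefore some $L_i\in\{L_0,L_1\}$ which is also $e_\varepsilon$-superheavy and, by assumption, disjoint from $L$. Because $\zeta_{e_\pm}$ is a homogeneous quasimorphism, $e_\varepsilon$-superheaviness implies $e_\varepsilon$-heaviness, so Proposition~\ref{disjoint-heavy} delivers the contradiction.

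The step I expect to need the most care is not the logical skeleton but the bridge between coefficient rings: one must freely pass from $\iota(e_\pm)$-heaviness detected by the $\Lambda$-coefficient Fukaya--Oh--Ohta--Ono closed--open map to $e_\pm$-superheaviness in the $\mathbb{C}[t^{-1},t]]$-coefficient theory where the quasimorphism argument lives. This is exactly the content of Lemma~\ref{rho compare} and Corollary~\ref{h to sh} developed in Section~\ref{comparison lemma}, so once those are cited, item (3) becomes a short formal consequence of the dichotomy already extracted from Theorems~\ref{FOOO S2S2} and~\ref{ueda}.
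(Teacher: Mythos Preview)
Your proposal is correct and follows essentially the same approach as the paper's own proof: the Lagrangians $L_0,L_1$ are exactly those from Theorems~\ref{FOOO S2S2} and~\ref{ueda}, and item~(3) is deduced by the same superheaviness dichotomy via Theorem~\ref{co map}, Corollary~\ref{h to sh}, and Proposition~\ref{disjoint-heavy}. One minor quibble: superheaviness implies heaviness in general (no quasimorphism hypothesis is needed for that direction), so your last invocation of the quasimorphism property is unnecessary, though harmless.
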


\begin{remark}
\begin{enumerate}
\item Under the symplectomophism between $Q^2$ and $S^2\times S^2$, the Lagrangian submanifolds $L_0$ and $L_1$ in Theorem \ref{Q4 intersection} correspond respectively to the so-called exotic torus defined by
$$\{(x,y) \in S^2\times S^2: x_1y_1+x_2y_2+x_3y_3=-1/2,x_3+y_3=0\}$$
which was studied in \cite{[EP09]}, \cite{[FOOO12]} and the anti-diagonal 
$$\{(x,y) \in S^2\times S^2:x=-y\}.$$
\item For more information about the two Lagrangian submanifolds in Theorem \ref{Q4 intersection}, see Theorems \ref{FOOO S2S2}, \ref{ueda} and related references.
\end{enumerate}
\end{remark}

For example, Theorem \ref{Q4 intersection} can be applied to the following two well-known Lagrangians in $Q^2$ and $Q^4$. In $Q^2$, there is a {\lag} torus $T$ which corresponds to the product of equatorial circles $S^1\times S^1$ in $S^2\times S^2$ under the symplectomophism between $Q^2$ and $S^2\times S^2$. In $Q^4$, there is the standard Lagrangian sphere $S^4$ which appears as the real locus 
$$S^4 = \{(x_0:\cdots: x_4) \in \C P^{5} : x_0 ^2 +\cdots +x_{3} ^2 = x_{4} ^2,\ x_j \in \R, j=0,\cdots,4 \}.$$

These Lagrangians $T$ and $S^4$ are known to have non-trivial Floer cohomology groups
$$HF(T;\Lambda)\neq 0 ,\ HF(S^4;\Lambda)\neq 0.$$

Theorem \ref{Q4 intersection} directly implies the following.

\begin{corol}
Any Hamiltonian deformation of $T$ in $Q^2$ or the standard Lagrangian sphere $S^4$ in $Q^4$ intersects either one of $L_0$ or $L_1$ in Theorem \ref{Q4 intersection}:

For any $\phi \in \Ham(Q^2),$
$$L_0 \cap \phi(T) \neq \emptyset \ \ \text{or}\ \ L_1 \cap \phi(T) \neq \emptyset.$$

For any $\phi \in \Ham(Q^4),$
$$L_0 \cap \phi(S^4) \neq \emptyset \ \ \text{or}\ \ L_1 \cap \phi(S^4) \neq \emptyset.$$
\end{corol}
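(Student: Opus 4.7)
The plan is to deduce the corollary from Theorem \ref{Q4 intersection}(3) by contradiction, using the Hamiltonian invariance of Lagrangian Floer cohomology together with the non-vanishing of $HF(T;\Lambda)$ and $HF(S^4;\Lambda)$ recalled in the paragraph immediately preceding the corollary statement.

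More concretely, I would fix $\phi \in \Ham(Q^n)$ for $n=2$ (resp.\ $n=4$) and assume for contradiction that $\phi(T)$ (resp.\ $\phi(S^4)$) is disjoint from both $L_0$ and $L_1$. First I would check that the Lagrangian $\phi(T)$ satisfies the hypotheses of Theorem \ref{Q4 intersection}(3) in the $n=2$ case: $T$ is a torus, hence oriented, and $\phi$ is a diffeomorphism, so $\phi(T)$ is oriented. In the $n=4$ case, $S^4$ is oriented and spin (being simply connected), hence relatively spin, and these properties are preserved by the diffeomorphism $\phi$. Thus Theorem \ref{Q4 intersection}(3) applies to $\phi(T)$ and to $\phi(S^4)$, giving
\[
HF((\phi(T),b');\Lambda)=0 \quad \text{(resp.}\ HF((\phi(S^4),b');\Lambda)=0\text{)}
\]
for every bounding cochain $b'$.

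The final step is to invoke the Hamiltonian invariance of the Fukaya--Oh--Ohta--Ono Lagrangian Floer cohomology: for a bounding cochain $b$ on $L$, the push-forward $\phi_\ast b$ is a bounding cochain on $\phi(L)$ and
\[
HF((\phi(L),\phi_\ast b);\Lambda) \;\cong\; HF((L,b);\Lambda).
\]
Applied to $L=T$ with any bounding cochain witnessing $HF(T;\Lambda)\neq 0$ (respectively to $L=S^4$), this forces $HF((\phi(T),\phi_\ast b);\Lambda)\neq 0$ (resp.\ $HF((\phi(S^4),\phi_\ast b);\Lambda)\neq 0$), contradicting the previous step. This contradiction yields the desired intersection.

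The proof is essentially a one-step application of the contrapositive of Theorem \ref{Q4 intersection}(3). The only technical point to be careful about, and the one I would spend the most words on in a full write-up, is citing the correct Hamiltonian-invariance statement for \emph{deformed} Floer cohomology with bounding cochain (since the theorem's conclusion quantifies over all $b$), and verifying that $T$ and $S^4$ are indeed monotone (so that classical Floer cohomology coincides with the trivially-deformed version) and relatively spin in the $n=4$ case. None of these are genuine obstacles, but they are the only places where a careful reader might want a reference.
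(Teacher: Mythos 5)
Your argument is correct and is precisely the one the paper intends: the corollary is stated as a direct consequence of Theorem \ref{Q4 intersection}(3), obtained by combining the Hamiltonian invariance of Lagrangian Floer cohomology with the non-vanishing of $HF(T;\Lambda)$ and $HF(S^4;\Lambda)$ recorded just before the statement. Your additional care about orientability, relative spin-ness, and the identification of the undeformed Floer cohomology with the $b=0$ case is exactly the right level of diligence, but does not change the substance of the argument.
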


\begin{remark}
In Theorem \ref{Q4 intersection}, it is crucial that we consider Floer cohomology without bulk-deformation. As it was studied by Fukaya--Oh--Ohta--Ono \cite{[FOOO12]} and Cho--Kim--Oh \cite{[CKO18]}, there exist Lagrangians in $Q^n\ (n=2,4)$ intersecting neither $L_0$ nor $L_1$ that have non-trivial bulk-deformed Floer cohomology.
\end{remark}

There are several ways to construct monotone Lagrangian submanifolds in $Q^n$ such as the Albers--Frauenfelder-type construction \cite{[AF08]} and the Biran-type construction \cite{[B01]}, \cite{[B06]}. Their precise
constructions and the relations among them are explained in \cite{[OU16]}. In particular, Oakley--Usher constructs monotone Lagrangian submanifolds in $Q^4$ which are diffeomorphic to $S^1\times S^3$ by these methods in \cite[Section 1.2]{[OU16]} denoted by $L^{Q} _{0,3}$ and $\mathbb{S}^{Q} _{0,3}$, which turn out to be Hamiltonian isotopic (see \cite[Theorem 1.4]{[OU16]}). However, the monotone Lagrangian submanifold $L_1$ in $Q^4$ which appeared in Theorem \ref{Q4 intersection} is not Hamiltonian isotopic to these examples due to Oakley--Usher as $L_1$ has minimal Maslov number $4$ (see \cite[Section 4.4]{[NU16]}) and Oakley--Usher's Lagrangian submanifold has minimal Maslov number $2$. Thus, we have the following.

\begin{prop}
The $4$-quadric $Q^4$ has two monotone Lagrangian submanifolds diffeomorphic to $S^1\times S^3$ which are not Hamiltonian isotopic.
\end{prop}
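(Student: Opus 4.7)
The plan is to exhibit two explicit monotone Lagrangian submanifolds of $Q^4$ diffeomorphic to $S^1\times S^3$ and distinguish them via a Hamiltonian-isotopy invariant, namely the minimal Maslov number $N_L$. Recall that if $L$ and $L'$ are Hamiltonian isotopic monotone Lagrangians in a symplectic manifold, then they are in particular Lagrangian isotopic, and the minimal Maslov number is invariant under Lagrangian isotopy (the Maslov class pulls back along the isotopy). Thus to prove the proposition it suffices to produce two monotone Lagrangians diffeomorphic to $S^1\times S^3$ whose minimal Maslov numbers differ.

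For the first Lagrangian, I take the Oakley--Usher Lagrangian $L^{Q}_{0,3}$ (equivalently $\mathbb{S}^{Q}_{0,3}$, since \cite{[OU16]} shows these two are Hamiltonian isotopic) in $Q^4$, which is diffeomorphic to $S^1\times S^3$ and has minimal Maslov number $2$; this is recorded in \cite[Section 1.2]{[OU16]}. For the second Lagrangian, I take the Gelfand--Cetlin fiber $L(u_1)$ of Theorem \ref{ueda}~(2), diffeomorphic to $U(2)\simeq S^1\times S^3$ and monotone (being the fiber over the center of a Lagrangian facet of the Gelfand--Cetlin polytope, by the classification of Cho--Kim \cite{[CK19]}). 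According to Nohara--Ueda \cite[Section 4.4]{[NU16]}, $L(u_1)$ has minimal Maslov number $4$.

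Since $2\neq 4$, the two Lagrangians cannot be Lagrangian isotopic, and in particular not Hamiltonian isotopic. This proves the proposition. The only substantive inputs are the monotonicity and Maslov computations for $L(u_1)$ (taken from \cite{[NU16]}, \cite{[CK19]}) and for $L^{Q}_{0,3}$ (taken from \cite{[OU16]}); no further obstacle arises, as Hamiltonian invariance of $N_L$ is standard. The only point to be slightly careful about is the normalization of the monotonicity constant: one should check that both Lagrangians are monotone with respect to the same monotone symplectic form on $Q^4$ used throughout the paper, which follows from the respective constructions and the choice $\alpha=2$ in the identification $Q^4\simeq \mathcal{O}_\lambda$ made in the remark after Theorem \ref{ueda}.
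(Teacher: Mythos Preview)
Your proof is correct and follows essentially the same approach as the paper: the paper distinguishes the Oakley--Usher Lagrangian $L^{Q}_{0,3}$ from the Gelfand--Cetlin fiber $L(u_1)$ by their minimal Maslov numbers ($2$ versus $4$), exactly as you do. Your version is slightly more detailed in spelling out why $N_L$ is a Hamiltonian-isotopy invariant and in flagging the normalization of the symplectic form, but the argument is the same.
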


Basically, Theorem \ref{Q4 intersection} comes from the fact that the quantum cohomology ring $QH^\ast(Q^n;\mathbb{C})=H^\ast(Q^n;\mathbb{C}) \otimes_{\mathbb{C}} \mathbb{C}[t^{-1},t]]$ splits into a direct sum of two fields. In the case where the quantum cohomology ring does not split i.e. itself is a field, we have a stronger rigidity result as follows.

\begin{prop}\label{finite gamma}
Let $(M,\omega)$ be a closed symplectic manifold for which the spectral pseudo-norm is bounded i.e.
$$\sup \{ \gamma(H) :H\in C^{\infty}(\mathbb{R}/\mathbb{Z}\times M,\mathbb{R})\}< +\infty.$$
Let $L_1$ be a Lagrangian submanifold such that
$$HF((L_1,b_1);\Lambda)\neq 0$$
for some bounding cochain $b_1$. Then, any Lagrangian submanifold $L_2$ which is disjoint from $L_1$ has a vanishing Floer cohomology:
$$HF((L_2,b_2);\Lambda)=0$$
for any bounding cochain $b_2$.
\end{prop}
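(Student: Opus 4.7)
The plan is to reformulate the desired vanishing in terms of Lagrangian (super)heaviness with respect to the unit $1_\Lambda \in QH^\ast(M;\Lambda)$ and then derive a contradiction from Proposition \ref{disjoint-heavy}. First, since the closed-open map
$$\mathcal{CO}^0_{b_1} \colon QH^\ast(M;\Lambda) \to HF^\ast((L_1,b_1);\Lambda)$$
is a unital ring homomorphism and $HF^\ast((L_1,b_1);\Lambda) \neq 0$, the class $\mathcal{CO}^0_{b_1}(1_\Lambda)$ is the non-zero unit of the target. As $1_\Lambda$ is an idempotent, Theorem \ref{co map} yields that $L_1$ is $1_\Lambda$-heavy. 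An identical argument shows that if $HF((L_2,b_2);\Lambda) \neq 0$ for some bounding cochain $b_2$, then $L_2$ is also $1_\Lambda$-heavy.

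The crucial step is to promote $1_\Lambda$-heaviness to $1_\Lambda$-superheaviness using only the hypothesis $C := \sup \gamma < +\infty$, without assuming that $\zeta_{1_\Lambda}$ is a homogeneous quasimorphism. By Lemma \ref{rho compare}, $\zeta_{1_\Lambda}(H) = \zeta_1(H)$ for every Hamiltonian $H$. The triangle inequality applied to $1 \ast 1 = 1$, combined with homotopy invariance of spectral invariants, gives
$$0 = \rho(H^k \# \overline{H^k}, 1) \leq \rho(H^k, 1) + \rho(\overline{H^k}, 1) = \gamma(H^k) \leq C,$$
so dividing by $k$ and letting $k \to \infty$ produces $\zeta_1(H) + \zeta_1(\overline{H}) = 0$. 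For a time-independent $H$ one has $\overline{H} = -H$, hence $\zeta_1(-H) = -\zeta_1(H)$. Substituting $-H$ into the heaviness inequality $\inf_{L_1}(-H) \leq \zeta_1(-H)$ and negating then yields $\zeta_{1_\Lambda}(H) \leq \sup_{L_1} H$ for every $H \in C^\infty(M,\mathbb{R})$, i.e.\ $L_1$ is $1_\Lambda$-superheavy.

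To conclude, suppose for contradiction that $HF((L_2,b_2);\Lambda) \neq 0$ for some bounding cochain $b_2$. By the first paragraph $L_2$ is $1_\Lambda$-heavy, and by the second paragraph $L_1$ is $1_\Lambda$-superheavy. Since $L_1 \cap L_2 = \emptyset$, this contradicts Proposition \ref{disjoint-heavy}, proving the desired vanishing. The principal obstacle lies in the second paragraph: in general $\zeta_{1_\Lambda}$ need not be a homogeneous quasimorphism (we do not know that $1$ is a unit of a field factor of $QH^\ast(M;\mathbb{C})$), so the standard equivalence of heaviness and superheaviness coming from Entov--Polterovich theory does not apply. The key observation is that finiteness of $\gamma$ forces the asymptotic antisymmetry $\zeta_1(-H) = -\zeta_1(H)$ on time-independent Hamiltonians, and this is precisely what is needed to upgrade $1_\Lambda$-heaviness to $1_\Lambda$-superheaviness.
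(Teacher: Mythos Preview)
Your proof is correct, but it takes a more circuitous route than the paper's. The paper argues directly: once both $L_1$ and $L_2$ are $\zeta_1$-heavy (which you also establish via Theorem~\ref{co map}), one simply writes
\[
\gamma(H) \;=\; \rho(H,1)+\rho(\overline{H},1)\;\geq\; \zeta_1(H) + \zeta_1(\overline{H}) \;\geq\; \inf_{L_1} H + \inf_{L_2}(-H) \;=\; \inf_{L_1} H - \sup_{L_2} H,
\]
and since $L_1 \cap L_2 = \emptyset$ the right-hand side can be made arbitrarily large, contradicting boundedness of $\gamma$. No passage through superheaviness or Proposition~\ref{disjoint-heavy} is needed. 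Your detour---deducing the antisymmetry $\zeta_1(-H) = -\zeta_1(H)$ from bounded $\gamma$ and then promoting heaviness to superheaviness---is a pleasant conceptual observation (it isolates exactly what bounded $\gamma$ buys), but unwinding the proof of Proposition~\ref{disjoint-heavy} recovers precisely the displayed inequality above, so the two arguments coincide at the level of the actual estimate.

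Two small points. First, the assertion that ``for a time-independent $H$ one has $\overline{H} = -H$'' is not literally true: $\overline{H}(t,x) = -H(\phi_H^t(x))$. What is true is that for autonomous $H$ the Hamiltonians $-H$ and $\overline{H}$ generate the same path $t\mapsto(\phi_H^t)^{-1}$, hence the same element of $\widetilde{\Ham}(M,\omega)$, so $\zeta_1(-H) = \zeta_1(\overline{H})$; this is all you need. Second, you invoke Lemma~\ref{rho compare} to identify $\zeta_{1_\Lambda}$ with $\zeta_1$, but that lemma is stated only for monotone $(M,\omega)$, whereas Proposition~\ref{finite gamma} is not so restricted (see Remark~\ref{remark after finite gamma}). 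The paper sidesteps this by working with the unit of $QH^\ast(M;\Lambda)$ throughout and reading the hypothesis on $\gamma$ in that setting; you could do the same and drop the appeal to Lemma~\ref{rho compare} entirely.
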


\begin{remark}\label{remark after finite gamma}
\begin{enumerate}

\item When $(M,\omega)$ is monotone, if $QH^\ast(M;\mathbb{C})$ is a field, then the spectral norm is bounded. Thus, Proposition \ref{finite gamma} applies to $\mathbb{C}P^n$ (see \cite{[EP03]}). 

\item Proposition \ref{finite gamma} is not restricted to monotone symplectic manifolds. Examples of non-monotone symplectic manifolds for which the spectral norm is bounded includes a large one point blow-up of $\mathbb{C}P^2$ and $(S^2\times S^2,\sigma \oplus \lambda \sigma)$ for $\lambda>1$ where $\sigma$ denotes an area form with $\int_{S^2} \sigma=1$. See Section \ref{lag int} for further remarks.

\end{enumerate}
\end{remark}

\begin{proof}[Proof of Theorem \ref{Q4 intersection}]

We assume that there exists a bounding cochain $b$ such that
$$HF((L,b);\Lambda)\neq 0$$
and show that $L$ must intersect either $L_0$ or $L_1$. As the closed-open map maps the identity element of the quantum cohomology ring to the identity element of the Lagrangian Floer cohomology group, we have 
$$ \mathcal{CO}^0 _b(1)=\PD([L]) \neq 0 \in HF^\ast ((L,b);\Lambda).$$
Since 
$$QH^\ast(Q^n;\mathbb{C}):=Q_+ \oplus Q_-$$
$$1=e_+ + e_-,$$
Theorem \ref{co map} implies that $L$ is either $\iota(e_+)$-heavy or $\iota(e_-)$-heavy where
$$\iota:QH^\ast(Q^n;\mathbb{C}) \hookrightarrow QH^\ast(Q^n;\Lambda).$$ 
Thus, by Corollary \ref{h to sh}, $L$ is either $e_+$-superheavy or $e_-$-superheavy (This argument was explained in more detail in the proof of Theorem \ref{main theo}.)
If $L$ intersects neither of $L_0,\ L_1$, then we have two disjoint sets which are either both $e_+$-superheavy or both $e_-$-superheavy, which contradicts Proposition \ref{disjoint-heavy}. Thus, $L$ must intersect either $L_0$ or $L_1$ and this completes the proof.
\end{proof}

We now prove Proposition \ref{finite gamma}.

\begin{proof}[Proof of Proposition \ref{finite gamma}]

Assume there exist two Lagrangian submanifolds $L_1$ and $L_2$ such that
$$L_1\cap L_2 =\emptyset$$
and 
$$HF((L_1,b_1);\Lambda) \neq 0,\ HF((L_2,b_2);\Lambda)\neq 0.$$
Then by Theorem \ref{co map}, $L_1$ and $L_2$ are both $\zeta_1$-heavy where $\zeta_1$ denotes the asymptotic spectral invariant with respect to the idempotent $1\in QH^\ast(M;\Lambda)$. Thus, for any Hamiltonian $H$ we have
$$\gamma(H)= \rho(H,1)+\rho(\overline{H},1) \geq \zeta_1 (H)+\zeta_1(\overline{H})$$
$$\geq \inf_{x\in L_1} H(x) + \inf_{x\in L_2} \overline{H}(x).$$
As $L_1\cap L_2 =\emptyset$, we can consider a Hamiltonian which is arbitrarily large on $L_1$ and arbitrarily small on $L_2$ which contradicts the assumption
$$\sup\{ \gamma(H):H\in C^{\infty}(\mathbb{R}/\mathbb{Z}\times M,\mathbb{R}) \}  < +\infty.$$
This completes the proof.
\end{proof}

As we have pointed out in Remark \ref{remark after finite gamma}, examples of closed symplectic manifolds that satisfy
$$\sup\{ \gamma(H):H\in C^{\infty}(\mathbb{R}/\mathbb{Z}\times M,\mathbb{R}) \}  < +\infty$$
include $\mathbb{C}P^n$, a large one point blow-up of $\mathbb{C}P^2$ and $(S^2\times S^2,\sigma \oplus \lambda \sigma)$ with $\lambda>1$. We provide a brief explanation to these examples.

One can easily check that, for any closed symplectic manifold $(M,\omega)$, the condition
$$\sup\{ \gamma(H):H\in C^{\infty}(\mathbb{R}/\mathbb{Z}\times M,\mathbb{R}) \}  < +\infty$$
is equivalent to 
$$\rho(\ \cdot \ , 1): \widetilde{\Ham}(M,\omega) \to \mathbb{R}$$
being a quasimorphism where $1\in QH^\ast(M;\mathbb{C})$. When $(M,\omega)$ is monotone, then $\rho( \cdot , 1)$ is a quasimorphism when $QH^\ast(M;\mathbb{C})$ is a field. Thus, the case of $\mathbb{C}P^n$ follows. When $(M,\omega)$ is non-monotone, \cite[Theorem 1.3]{[Ost06]} or \cite[Theorem 3.1]{[EP08]} imply that $\rho(\cdot , 1)$ is a quasimorphism when ``$QH^{0}(M;\mathbb{C})$'' is a field where a different set-up of the quantum cohomology is considered. For a precise definition of this set-up, we refer to \cite{[Ost06]}, \cite{[EP06]}. As pointed out in \cite[Lemma 3.1, Remark 3.4]{[Ost06]}, ``$QH^{0}(M;\mathbb{C})$'' is a field when $(M,\omega)$ is a large one point blow-up of $\mathbb{C}P^2$ or $(S^2\times S^2,\sigma \oplus \lambda \sigma)$ with $\lambda>1$.

\subsection{Proof of Application}

In this section, we prove the following Theorem, which includes Theorem \ref{leroux conj}.

\begin{theo}\label{leroux conj gene}

Let $(M,\omega)$ be a symplectic manifold which is either symplectically aspherical or monotone with the same monotonicity constant as $Q^n,\ n=2,4$ (we also allow it to be an empty set). For any $R>0$,
$$\Ham_{\geq R}:=\{\phi \in \Ham(Q^n \times M): d_{\Hof}( \id,\phi)\geq R\}$$
has a non-empty $C^0$-interior.
\end{theo}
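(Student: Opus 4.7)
The strategy is to deduce everything from the existence of a non-trivial, $C^0$-continuous, Hofer Lipschitz homogeneous quasimorphism
$$\mu \colon \Ham(Q^n \times M) \to \mathbb{R},$$
via a short, essentially formal, unboundedness argument. The case $M = \emptyset$ is exactly Theorem~\ref{main theo}. The case where $M$ is monotone with the same monotonicity constant as $Q^n$ and carries a Lagrangian with non-vanishing deformed Floer cohomology is exactly Theorem~\ref{main theo general}. The remaining case, when $M$ is symplectically aspherical, requires a mild extension of Theorem~\ref{main theo general}: although $QH^\ast(M;\mathbb{C})$ reduces to the cup product ring $H^\ast(M;\mathbb{C})$ and need not be semi-simple, a Künneth-type analysis shows that the splitting $QH^\ast(Q^n;\mathbb{C}) = Q_+ \oplus Q_-$ still induces a decomposition of $QH^\ast(Q^n \times M;\mathbb{C})$ into two complementary ideals, each containing a field summand coming from $Q_\pm$. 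The McDuff-style weakening of semi-simplicity noted in the remark after Theorem~\ref{general} then suffices, and the disjoint superheavy Lagrangians $L_0, L_1 \subset Q^n$ from Theorems~\ref{FOOO S2S2} and~\ref{ueda} multiplied by an appropriate unobstructed Lagrangian in $M$ separate the two resulting idempotents exactly as in the proof of Theorem~\ref{main theo general}.

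Once $\mu$ is in hand, the theorem follows immediately. Let $K > 0$ be a Hofer Lipschitz constant for $\mu$, so that $|\mu(\phi)| \leq K \cdot d_{\Hof}(\id,\phi)$ for every $\phi$. Since $\mu$ is non-trivial and homogeneous, pick $\phi_0 \in \Ham(Q^n \times M)$ with $c := \mu(\phi_0) \neq 0$; iterating gives $\mu(\phi_0^k) = kc$, which grows linearly in $k$. For the given $R > 0$ define
$$U_R := \bigl\{ \phi \in \Ham(Q^n \times M) : |\mu(\phi)| > K R \bigr\}.$$
Then $U_R$ is non-empty (it contains $\phi_0^k$ as soon as $k > KR/|c|$), it is $C^0$-open by the $C^0$-continuity of $\mu$ (equivalently by Shtern's criterion, Proposition~\ref{shtern}, applied to the homogeneous quasimorphism $\mu$), and by the Lipschitz bound every $\phi \in U_R$ satisfies $d_{\Hof}(\id,\phi) \geq |\mu(\phi)|/K > R$, so $U_R \subset \Ham_{\geq R}$. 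Hence $\Ham_{\geq R}$ has non-empty $C^0$-interior.

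The main obstacle is the first step in the symplectically aspherical case, where the semi-simplicity hypothesis that underlies Theorem~\ref{general} is not automatically inherited by $QH^\ast(Q^n \times M;\mathbb{C})$; verifying that the McDuff two-field-summand weakening is enough, and that the products of the disjoint Lagrangians $L_0, L_1 \subset Q^n$ with a suitable Lagrangian $N \subset M$ remain disjoint and superheavy for two distinct idempotents, is where the real content lies. The Le Roux conclusion (Step~2) is a standard formal consequence of non-triviality, homogeneity, Hofer Lipschitz and $C^0$-continuity of $\mu$, and presents no difficulty.
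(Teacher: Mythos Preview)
Your Step~2 is fine and is essentially the same formal argument as the paper's Theorem~\ref{monotone leroux}, with $\mu$ in place of the spectral norm $\gamma$. The genuine gap is in Step~1: you do \emph{not} have a quasimorphism on $\Ham(Q^n\times M)$ under the stated hypotheses. Theorem~\ref{main theo general} requires both that $QH^\ast(M;\mathbb{C})$ be semi-simple \emph{and} that $M$ contain a Lagrangian $L$ with $HF((L,b);\Lambda)\neq 0$; neither is assumed in Theorem~\ref{leroux conj gene}. Your aspherical extension suffers from the same problem: you explicitly invoke ``a suitable Lagrangian $N\subset M$'' to form products $L_0\times N$, $L_1\times N$, but a symplectically aspherical $M$ need not carry any Lagrangian with non-vanishing Floer cohomology (or any closed Lagrangian at all). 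Even replacing $N$ by $M$ itself and using the coisotropic products $L_0\times M$, $L_1\times M$ does not save the argument, since then the relevant idempotents $e_\pm\otimes 1_M$ sit in $Q_\pm\otimes QH^\ast(M;\mathbb{C})$, and when $QH^\ast(M;\mathbb{C})=H^\ast(M;\mathbb{C})$ has nilpotents these factors need not contain any field summand, so the McDuff two-field weakening is not available either.

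The paper sidesteps all of this by never building a quasimorphism on the product. It uses $\mu$ only on $\Ham(Q^n)$, where Theorem~\ref{main theo} already gives it, and combines $|\mu|\leq\gamma$ (Proposition~\ref{descend}) with the K\"unneth formula for spectral invariants, $\gamma(G\oplus 0)=\gamma(G)$, to conclude that $\gamma$ is unbounded on $\Ham(Q^n\times M)$. Since $Q^n\times M$ is monotone, Theorem~\ref{my lemma} provides the $C^0$-control of $\gamma$ needed for the Le~Roux conclusion (Theorem~\ref{monotone leroux}). In short: work with $\gamma$, not with $\mu$, on the product; K\"unneth for spectral invariants requires nothing of $M$ beyond monotonicity of $Q^n\times M$, whereas transporting the quasimorphism itself does.
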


Theorem \ref{leroux conj gene} follows as a corollary of the following statement.

\begin{theo}\label{monotone leroux}
Let $(M,\omega)$ be a monotone symplectic manifold. Assume that 
$$ \sup\{ \gamma( \phi ): \phi \in \Ham(M) \}=+\infty .$$
For any $R>0$,
$$\Ham_{\geq R}:=\{ \phi \in \Ham(M,\omega): d_{\Hof}( \id,\phi)\geq R\}$$
has a non-empty $C^0$-interior.
\end{theo}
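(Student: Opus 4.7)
\emph{Strategy.} The plan is to show that for each $R>0$ one can exhibit a single element $\phi_0 \in \Ham(M,\omega)$ whose entire $C^0$-neighborhood lies in $\Ham_{\geq R}$. Two standard facts will drive the argument: the spectral norm is bounded above by the Hofer norm, i.e.\ $d_{\Hof}(\id,\phi)\geq \gamma(\phi)$ (an immediate consequence of Proposition \ref{prop spec inv}(1)), and the spectral norm is subadditive, $\gamma(\phi\psi)\leq \gamma(\phi)+\gamma(\psi)$ (from Proposition \ref{prop spec inv}(4) applied to the unit $1\in QH^0(M;\mathbb{C})$). The crucial input, however, is the local $C^0$-rigidity of $\gamma$ near the identity provided by Theorem \ref{my lemma}, which gives a uniform ceiling on $\gamma$ over a $C^0$-ball around $\id$.

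\emph{Key steps.} Set $K_0 := \frac{\dim(M)}{N_M}\cdot\lambda_0 + 1$. By Theorem \ref{my lemma}, there exists $\delta>0$ such that $\gamma(\psi) \leq K_0$ whenever $d_{C^0}(\id,\psi)<\delta$. Using the hypothesis $\sup_{\phi} \gamma(\phi) = +\infty$, pick $\phi_0 \in \Ham(M,\omega)$ with $\gamma(\phi_0)>R+K_0$. I claim that the open $C^0$-ball
\[
\mathcal{U} := \{\phi \in \Ham(M,\omega) : d_{C^0}(\phi,\phi_0) < \delta\}
\]
is contained in $\Ham_{\geq R}$, so that $\phi_0$ lies in the $C^0$-interior of $\Ham_{\geq R}$. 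Indeed, the $C^0$-metric is right-invariant: for $\phi \in \mathcal{U}$, setting $y=\phi^{-1}(x)$ gives $d_{C^0}(\id,\phi_0\phi^{-1}) = \max_x d(x,\phi_0\phi^{-1}(x)) = \max_y d(\phi(y),\phi_0(y)) = d_{C^0}(\phi,\phi_0)<\delta$. Hence $\gamma(\phi_0\phi^{-1})\leq K_0$, and the subadditivity of $\gamma$ applied to the factorisation $\phi_0 = (\phi_0\phi^{-1})\cdot \phi$ yields
\[
\gamma(\phi_0) \leq \gamma(\phi_0\phi^{-1}) + \gamma(\phi) \leq K_0 + \gamma(\phi),
\]
so $\gamma(\phi) > \gamma(\phi_0)-K_0 > R$. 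Combined with $d_{\Hof}(\id,\phi)\geq \gamma(\phi)$, this gives $\phi \in \Ham_{\geq R}$.

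\emph{Main obstacle.} The proof of Theorem \ref{monotone leroux} itself encounters no real difficulty once the $C^0$-control of the spectral norm (Theorem \ref{my lemma}) is in hand; all other ingredients (right-invariance of $d_{C^0}$, subadditivity of $\gamma$, Hofer-Lipschitz bound) are elementary. The substantive difficulty is shifted to verifying the hypothesis $\sup_{\phi}\gamma(\phi)=+\infty$ in the intended applications. For Theorem \ref{leroux conj gene}, this is exactly where the earlier parts of the paper pay off: the non-trivial homogeneous quasimorphism $\mu$ on $\Ham(Q^n\times M)$ produced by Theorem \ref{main theo general} satisfies $|\mu(\phi)|\leq \gamma(\phi)$ by Proposition \ref{mu leq gamma}, and its homogeneity forces $|\mu(\phi^k)|=k\cdot |\mu(\phi)|$ to be unbounded along iterates of any $\phi$ with $\mu(\phi)\neq 0$, whence $\sup \gamma=+\infty$. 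Theorem \ref{monotone leroux} then applies to yield Theorem \ref{leroux conj gene}.
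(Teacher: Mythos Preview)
Your proof is correct and follows essentially the same route as the paper: choose $\phi_0$ with $\gamma(\phi_0)$ exceeding $R$ plus the $C^0$-local bound on $\gamma$ from Theorem \ref{my lemma}, then use right-invariance of $d_{C^0}$ and subadditivity of $\gamma$ to conclude that every $\phi$ in a $C^0$-ball around $\phi_0$ has $\gamma(\phi)>R$, hence Hofer norm at least $R$. Your write-up is in fact slightly more explicit than the paper's (you spell out the right-invariance computation and the Hofer-versus-$\gamma$ inequality), and your final paragraph correctly anticipates how the hypothesis $\sup\gamma=+\infty$ is verified for $Q^n\times M$ via Proposition \ref{descend} and the non-triviality of $\mu$.
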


\begin{proof}[Proof of Theorem \ref{monotone leroux}]
From the assumption, for any $R>0$, we can find $\phi \in \Ham(M, \omega)$ such that 
$$\gamma(\phi)>R+\frac{2 \dim(M)}{N_{M}} \cdot \lambda_0$$
 where $N_{M}$ is the minimal Chern number of $M$. By Theorem \ref{my lemma}, there exists $\delta>0$ such that if $d_{C^0}( \id,\phi' )<\delta$, then 
 $$\gamma(\phi' )<\frac{2 \dim(M)}{N_{M}}\cdot \lambda_0.$$ Thus, for any $\psi \in \Ham( M, \omega)$ such that $d_{C^0}(\phi,\psi)<\delta,$ we have 
$$\gamma(\psi) \geq \gamma(\phi)-\gamma(\psi \circ \phi^{-1})> R+\frac{2 \dim(M)}{N_{M}}\cdot \lambda_0-\frac{2 \dim(M)}{N_M}\cdot \lambda_0=R$$
thus, $\psi \in \Ham_{\geq R}$. This completes the proof.
\end{proof}

Now Theorem \ref{leroux conj gene} follows immediately.

\begin{proof}[Proof of Theorem \ref{leroux conj gene}]

From the K\"unneth formula for spectral invariants (see \cite[Section 5.1]{[EP09]}), we have
$$\sup \{ \gamma( H ) : H \in C^{\infty} (S^1 \times Q^n\times M) \} \geq \sup \{ \gamma( G \oplus 0 ) : G \in C^{\infty} ( S^1 \times Q^n ) \}$$
$$ = \sup \{ \gamma( G  ) : G \in C^{\infty} ( S^1 \times Q^n ) \}  \geq \sup \{ \mu ( \phi ) :\phi \in \Ham ( Q^n ) \}=+\infty$$
for $n=2,4$. Note that the last equality uses the non-triviality and the homogeneity from Theorem \ref{main theo}. The following Claim \ref{claim} implies that the above estimate is equivalent to
$$\sup \{ \gamma( \phi ) : \phi \in \Ham ( Q^n\times M) \}= +\infty$$
which completes the proof of Theorem \ref{leroux conj gene} via Theorem \ref{monotone leroux}.

\end{proof}

\begin{claim}\label{claim}
Let $(M,\omega)$ be a closed monotone symplectic manifold. For any Hamiltonian $H$, we have
$$\gamma (H)- \frac{\dim (M)}{N_M} \cdot \lambda_0 \leq \gamma (\phi_H) \leq \gamma (H ) .$$
\end{claim}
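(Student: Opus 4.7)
The upper bound $\gamma(\phi_H) \leq \gamma(H)$ is immediate from the definition of the spectral norm on $\Ham(M,\omega)$: since $H$ is one Hamiltonian generating $\phi_H$, taking the infimum over all such generators can only decrease $\gamma$. The content of the claim is the lower bound, and the plan is to control the defect by the spectral norm of Hamiltonian loops via Theorem \ref{my lemma}.

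First I would pass to the universal cover. Recall from Section \ref{spec inv} that $\gamma$ is well-defined on $\widetilde{\Ham}(M,\omega)$ and that for a mean-normalized Hamiltonian $F$ one has $\gamma(F)=\gamma(\tilde{\phi}_F)$, where $\tilde{\phi}_F$ denotes the homotopy class of the path $t\mapsto \phi_F^t$. (Replacing $H$ and the infimum-realising $G$'s by their mean-normalised representatives does not change the relevant values of $\gamma$.) If $G$ is any Hamiltonian with $\phi_G=\phi_H$, then $\tilde{\phi}_G\cdot \tilde{\phi}_H^{-1}=:\psi$ defines an element of $\pi_1(\Ham(M,\omega))\subset \widetilde{\Ham}(M,\omega)$.

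Second, I would apply the triangle inequality for $\gamma$ on $\widetilde{\Ham}(M,\omega)$ to the decomposition $\tilde{\phi}_H=\psi^{-1}\cdot \tilde{\phi}_G$:
$$\gamma(H)=\gamma(\tilde{\phi}_H)\leq \gamma(\psi^{-1})+\gamma(\tilde{\phi}_G)=\gamma(\psi)+\gamma(G).$$
By Theorem \ref{my lemma}, any Hamiltonian loop $\psi\in \pi_1(\Ham(M,\omega))$ satisfies
$$\gamma(\psi)\leq \frac{\dim(M)}{N_M}\cdot \lambda_0,$$
so we obtain
$$\gamma(H)\leq \frac{\dim(M)}{N_M}\cdot \lambda_0 + \gamma(G).$$
Taking the infimum over all $G$ with $\phi_G=\phi_H$ yields
$$\gamma(H)-\frac{\dim(M)}{N_M}\cdot \lambda_0 \leq \gamma(\phi_H),$$
which is the desired lower bound.

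There is no real obstacle here, beyond keeping the bookkeeping between $\widetilde{\Ham}$ and $\Ham$ straight and invoking Theorem \ref{my lemma} in the correct form. The whole point is that the failure of $\gamma$ on Hamiltonians to descend exactly to $\gamma$ on $\Ham$ is measured by the spectral norm of loops, and monotonicity gives a uniform bound $\tfrac{\dim(M)}{N_M}\lambda_0$ on the latter.
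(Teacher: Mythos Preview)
Your proof is correct and follows exactly the approach the paper indicates: the upper bound is the definition of $\gamma$ on $\Ham$, and the lower bound comes from the triangle inequality for $\gamma$ on $\widetilde{\Ham}$ together with the loop estimate from Theorem \ref{my lemma}. There is nothing to add.
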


\begin{proof}[Proof of Claim \ref{claim}]
It is an easy consequence of the estimate concerning the spectral norm of Hamiltonian loops in Theorem \ref{my lemma}.
\end{proof}

\appendix

\end{document}